\newtheorem{main}{Theorem}
\newtheorem{theorem}{Theorem}[section]
\newtheorem{definition}[theorem]{Definition}
\newtheorem{lemma}[theorem]{Lemma}
\newtheorem{proposition}[theorem]{Proposition}
\newtheorem{corollary}[theorem]{Corollary}
\newtheorem{remark}[theorem]{Remark}
\numberwithin{equation}{section}
\newcommand\thmref[1]{Theorem~\ref{#1}}
\newcommand\lemref[1]{Lemma~\ref{#1}}
\newcommand\propref[1]{Proposition~\ref{#1}}
\newcommand\corref[1]{Corollary~\ref{#1}}
\newcommand\defref[1]{Definition~\ref{#1}}
\newcommand\secref[1]{Section~\ref{#1}}
\newcommand\rmkref[1]{Remark~\ref{#1}}
\newcommand\figref[1]{Figure~\ref{#1}}
\newcommand{\Pexp}{P_\mathrm{exp}^\perp}
\newcommand{\eps}{\varepsilon}
\newcommand{\NN}{\mathbb{N}}
\newcommand{\RR}{\mathbb{R}}
\newcommand{\ZZ}{\mathbb{Z}}
\newcommand{\CCC}{\mathcal{C}}
\newcommand{\DDD}{\mathcal{D}}
\newcommand{\UUU}{\mathcal{U}}
\newcommand{\VVV}{\mathcal{V}}
\newcommand{\PPP}{\mathcal{P}}
\newcommand{\GGG}{\mathcal{G}}
\newcommand{\SSS}{\mathcal{S}}
\newcommand{\FFF}{\mathcal{F}}
\newcommand{\MMM}{\mathcal{M}}
\newcommand{\WWW}{\mathcal{W}}
\newcommand{\Diff}{\mathrm{Diff}}
\newcommand{\Emb}{\mathrm{Emb}}
\newcommand{\NE}{\mathrm{NE}}
\renewcommand{\top}{\mathrm{top}}
\newcommand{\dist}{\mathrm{dist}}
\begin{document}
\title[Unique equilibrium states]{Unique equilibrium states for some partially hyperbolic diffeomorphisms with dominated splittings}
\author{Qiao Liu and Jianxiang Liao}
\address{School of mathematics and computational science, Xiangtan university, Hunan, China}
\email{qiaoliu@xtu.edu.cn}
\email{202321511178@smail.xtu.edu.cn}
\begin{abstract}
    We prove robustness and uniqueness of equilibrium states for a class of partially hyperbolic diffeomorphisms with dominated splittings and H\"older continuous potentials with not very large oscillation.
\end{abstract}

\maketitle

\section{Introduction}
For a continuous map on compact metric space $f: M \rightarrow M$ and a continuous function $\phi: M \rightarrow \RR$, called a potential function, topological pressure of $(f,\phi)$ can be defined as, with variational principle \cite{walters1975variational},
\begin{equation*}
    P(f,\phi)=\sup_{\mu}\left\{h_\mu(f)+\int \phi d\mu\right\},
\end{equation*}
where the supremum is taken over all $f$-invariant Borel probability measures and $h_{\mu}(f)$ denotes the metric entropy of $f$ with respect to invariant measure $\mu$. An invariant probability measure that maximizes the quantity $h_\mu(f)+\int \phi d\mu$ is called equilibrium state. In particular, the topological pressure coincides with topological entropy when the potential function is identically zero. In this case, measure achieves the supremum is called measure of maximal entropy.

It has been a long-standing problem to find conditions that guarantee the existence and/or uniqueness of equilibrium states. The theory of uniqueness of equilibrium states of uniformly hyperbolic dynamical systems was developed by Sinai, Ruelle and Bowen in the mid-1970s. Sinai initiated the study \cite{sinai1972gibbs} in the case of Anosov diffeomorphism. Ruelle and Bowen generalized to uniformly hyperbolic (Axiom A) system \cite{bowen1971entropy, ruelle1968statistical, ruelle1978thermodynamic}. In \cite{bowen1974some}, Bowen provided a criterion to guarantee the existence and uniqueness of equilibrium states. It shows that $(M,f,\phi)$ has a unique equilibrium state whenever $(M,f)$ is an expansive system with specification and the continuous potential $\phi$ satisfies a certain regular condition ( known as the Bowen property). An important example that fits this criterion is the H\"older continuous potentials over uniformly hyperbolic systems.

For non-uniform hyperbolic dynamical systems, a recent breakthrough was made by Climenhaga and Thompson \cite{climenhaga2016unique}. Following Bowen's approach, they are able to weaken the three hypotheses to weak specification with all scale, expansivity observed by all measures with large free energy, and Bowen property on a suitable large collection of orbit segments. Applying this criterion, it is proved that geodesic flows in non-positive curvature admits a unique equilibrium state \cite{burns2018unique}, as do certain partially hyperbolic systems like the Bonatti-Viana family of diffeomorphism  and Ma\~{n}\'{e}'s derived from Anosov (DA) diffeomorphism on $\mathbb T^3$ \cite{climenhaga2018unique, climenhaga2019equilibrium}.
This so called Climenhaga-Thompson criterion was improved by Pacifico, Yan and Yang in \cite{pacifico2022existence} and applied to Lorenz-like attractors in any dimension to get uniqueness of equilibrium states \cite{pacifico2022uniqueness}.

In \cite{climenhaga2021beyond}, authors showed that partially hyperbolic systems with one-dimensional center which every equilibrium state has central Lyapunov exponent negative and the unstable foliation is minimal have a unique equilibrium state. Although the minimality condition implies $\eps$-minimality in some neighborhood $\UUU$. But the assumption of the central Lyapunov exponent is a not open condition, the robustness of the uniqueness of the equilibrium states does not follow directly from this work. To overcome this obstruction, Juan Carlos Mongez and Maria Jos\'e Pacifico \cite{mongez2025robustness} give a open condition in place of the negative Lyapunov exponents, that is $h^{u}(f)-h^{s}(f)> \sup\phi -\inf\phi\geq 0$, where $h^{u}(f)$ and $h^{s}(f)$ denote the unstable and stable entropy respectively which introduced in \cite{hu2017unstable, yang2021entropy}.

Following their work, we focus the robustness of the uniqueness of the equilibrium states of partially hyperbolic diffeomorphisms with dominated splittings which its central bundle $E^c$ can be decomposed into some one-dimensional subbundles.

\subsection*{Dominated Splittings}
Let $M$ be a compact Riemannian manifold, and $f:M\rightarrow M$ a $C^1$ diffeomorphism. We denote by $\|\cdot\|$ the norm induced by the Riemannian structure.

\begin{definition}
    We say that $f$ has a dominated splitting $TM=E\oplus F$ if
    \begin{itemize}
        \item[(i)] $E$ and $F$ are $Df$-invariant;
        \item[(ii)] The subbundles $E$ and $F$ are continuous, i.e. $E(x)$ and $F(x)$ vary continuously with $x\in M$;
        \item[(iii)] There exist $C>0$ and $0<\lambda<1$ such that
        \begin{equation*}
            \|Df^n |_{E(x)} \|\cdot \|Df^{-n} |_{F(f^n(x))} \| \leq C\lambda^n,
        \end{equation*}
        for all $x\in M$ and $n\geq 0$.
    \end{itemize}
\end{definition}
If $f$ has a dominated splitting $TM=E\oplus F$, we can denote it by $E\prec F$.

\begin{definition}{\rm (Plaque Family) }
    Let us consider a dominated splitting $TM=E\oplus F$. A \emph{plaque family} tangent to $E$ is a continuous map $\WWW$ from the linear bundle $E$ into $M$ such that
    \begin{itemize}
        \item for every $x\in M$, the map $\WWW_x:E_x\rightarrow M$ is a $C^1$-embedding which satisfies $\WWW_x(0)=x$ and whose image is tangent to $E_x$ at $x$;
        \item $(\WWW_x)_{x\in M}$ is a continuous family of $C^1$-embeddings.
    \end{itemize}
\end{definition}
The plaque family $\WWW$ is \emph{locally invariant} if there exists $\rho>0$ such that for every $x\in M$ the image of the ball $B(0,\rho)\subset E_x$ by $f\circ \WWW_x$ is contained in the plaque $\WWW_{f(x)}$.

We denote the image of $\WWW_x$ by itself. In \cite{hirsch1977invariant}, it is showed that the locally invariant plaque family tangent to $E$ always exists (not unique in general), see \thmref{thm:loc-palque}.

\begin{definition}{\rm (Trapped plaques)}\label{def:trapped-plaque}
    The plaque family $\WWW$ tangent to $E$ is \emph{trapped} if for each $x\in M$, one has
    \begin{equation*}
        f(\overline{\WWW_x})\subset \WWW_{f(x)}.
    \end{equation*}
    $\WWW$ is \emph{thin trapped} if for any neighborhood $S$ of the section $0$ in the $E$ there exists a continuous family $(\varphi_x)_{x\in M}$ of $C^1$-diffeomorphisms of the spaces $(E_x)_{x\in M}$ such that for every $x\in M$ one has
    \begin{equation*}
        \varphi_x(B(0,1))\subset S \, \mbox{ and } \, f(\overline{\WWW_x\circ \varphi_x(B(0,1))})\subset \WWW_{f(x)}\circ \varphi_{f(x)} (B(0,1)).
    \end{equation*}
\end{definition}

\begin{definition}{\rm (Thin Trapped)}
    The bundle $E$ is \emph{thin trapped} if every locally invariant plaque family tangent to $E$ is thin trapped, equivalently if one locally invariant plaque family tangent to $E$ is thin trapped (see \propref{prop:trapped-palque}).
\end{definition}

In general, a $Df$-invariant splitting of the form  $TM=E_1\oplus\cdots \oplus E_k$ if for every $i=1,\dots, k-1$ is dominated if for all $i=1,\dots,k$,  $TM=E_1^i \oplus E_{i+1}^k$ is dominated, where $E_l^m=E_l\oplus \cdots \oplus E_m$.

A diffeomorphism $f:M\rightarrow M$ is partially hyperbolic if it admits a $Df$-invariant dominated splitting
\begin{equation*}
    TM=E^s\oplus E^c\oplus E^u,
\end{equation*}
such that $E^s$ and $E^u$ are uniformly contracted by $Df$ and $Df^{-1}$ respectively.

We give a similar condition $h_{\top}(f)- \max\{h^u(f), h^s(f)\}> \sup\phi - \inf\phi\geq 0$ referenced from \cite{mongez2025robustness,mongez2025partially}. This condition guarantee that there are only non-zero central Lyapunov exponents for every ergodic equilibrium states. Our main result is the following:

\begin{main}\label{thm:main}
    Let $f: M\rightarrow M$ be a $C^{1+}$ partially hyperbolic diffeomorphism on a compact Riemannian manifold $M$ with a dominated splitting
    \begin{equation*}
        TM=E^s\oplus E_1^c \oplus E_2^c \oplus E^u,
    \end{equation*}
    where $\dim E_i^c = 1$ for $i=1, 2$, and $\phi:M\rightarrow \RR$ a H\"older continuous potential. If $h_{\top}(f)- \max\{h^u(f), h^s(f)\}> \sup\phi - \inf\phi\geq 0$, the bundle $E^s\oplus E_1^c$ is thin trapped, and the stable foliation $\FFF^s(f)$ is minimal, then there exists a $C^1$ neighborhood $\UUU$ of $f$ such that $g$ has a unique equilibrium states for all $g\in \UUU \cap \Diff^{1+}(M)$.
\end{main}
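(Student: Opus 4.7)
The plan is to apply the Climenhaga--Thompson criterion in the refined form of \cite{pacifico2022existence}, as already adapted by Mongez--Pacifico in \cite{mongez2025robustness}, to a decomposition of orbit segments into a \emph{good} collection $\GGG$ on which weak specification and the Bowen property hold and \emph{bad} prefixes and suffixes $\PPP$, $\SSS$ whose topological pressure is strictly smaller than $P(g,\phi)$, with all constants uniform over a $C^1$ neighborhood $\UUU$ of $f$.

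First I would use the entropy gap to force central hyperbolicity for equilibrium states. For any equilibrium state $\mu$ one has $h_\mu(f) \geq P(f,\phi) - \sup\phi \geq h_{\top}(f) - (\sup\phi - \inf\phi) > \max\{h^u(f),h^s(f)\}$. The variational principles for stable and unstable entropy (\cite{hu2017unstable, yang2021entropy}) say that an invariant measure whose central Lyapunov exponents are all non-positive satisfies $h_\mu \leq h^u(f)$, and dually for non-negative exponents. Combined with the domination $E_1^c \prec E_2^c$, this forces every ergodic equilibrium state to satisfy $\lambda^c_1(\mu) < 0 < \lambda^c_2(\mu)$, so that $E^s \oplus E_1^c$ acts as a genuine stable direction and $E_2^c \oplus E^u$ as a genuine unstable direction along $\mu$-typical orbits.

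Next I would define $\GGG = \GGG(\alpha,\beta)$ as the collection of orbit segments $(x,n)$ whose finite-time central Lyapunov exponents along $E_1^c$ and $E_2^c$ are at most $-\alpha$ and at least $\beta$ respectively, for small $\alpha,\beta > 0$ to be chosen. On $\GGG$ the thin trapping of $E^s \oplus E_1^c$ (an open condition by \defref{def:trapped-plaque}, hence available on all of $\UUU$) provides uniform forward-invariant stable plaques of a definite size, and $E_2^c \oplus E^u$ admits locally invariant plaques by \thmref{thm:loc-palque} which behave as unstable plaques on $\GGG$. The Bowen property for H\"older $\phi$ on $\GGG$ then reduces to the standard exponential-shadowing computation. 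Weak specification at all scales comes from the $C^1$-robust $\eps$-minimality of $\FFF^s$ (an open condition extracted in \cite{mongez2025robustness}) together with the dual transitive density property for $E_2^c\oplus E^u$-plaques supplied by the unstable entropy gap. The obstruction collections $\PPP$ and $\SSS$, consisting of segments violating the Lyapunov bounds, have topological pressure at most $\max\{h^u(g),h^s(g)\} + \sup\phi < P(g,\phi)$ by the entropy gap again.

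The main obstacle is the uniformity in $g \in \UUU$. One must show that $h_{\top}(g) - \max\{h^u(g),h^s(g)\} > \sup\phi - \inf\phi$ persists under small $C^1$ perturbations; this requires upper semicontinuity of the stable and unstable entropies $g \mapsto h^u(g),\,h^s(g)$ together with lower semicontinuity of the topological entropy under $C^1$ perturbations preserving the dominated splitting, and simultaneously the thin trapping constants, the $\eps$-minimality constants of $\FFF^s$, and the plaque sizes must all be controlled uniformly so that $\GGG_g$ can be defined with constants independent of $g$. This is where the core of the robustness lies, and it is the step that should closely mirror and extend the techniques of \cite{mongez2025robustness, mongez2025partially}.
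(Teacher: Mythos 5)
Your proposal follows essentially the same route as the paper: use the entropy gap to force $\lambda^c_1(\mu)<0<\lambda^c_2(\mu)$ for every ergodic equilibrium state, argue this is $C^1$-open via continuity of the pressure map, build the decomposition $(\PPP_g,\GGG_g,\SSS_g)$ from finite-time Birkhoff sums of the central derivative potentials, get the Bowen property from the trapped $cs$-plaques together with backward-contracting $cu$-disks, get specification from $\eps$-minimality of $\FFF^s$ plus the local product structure, bound $P(\PPP_g\cup\SSS_g,g,\phi,\delta,\eps)$ by the Lyapunov-gap lemma, and close with the criterion of \cite{pacifico2022existence}.

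There is, however, one genuine gap: the criterion also requires $\Pexp(g,\phi,\eps)<P(g,\phi)$, and your plan never addresses the pressure of the obstruction to expansivity. This is not automatic and is handled in the paper by \propref{prop:expansivity}, which invokes the entropy-expansiveness result of \cite{liao2013entropy} (valid precisely because the center splits into one-dimensional subbundles, placing the map far from homoclinic tangencies) to show that any ergodic measure charging $\NE(\eps_1)$ has a zero central exponent and hence falls under \lemref{lem:less-press}. Without this step \thmref{thm:ct-criteria} cannot be applied. A smaller imprecision: you attribute part of specification to a ``dual transitive density property for $E_2^c\oplus E^u$-plaques supplied by the unstable entropy gap,'' but the paper needs no such property---the $\eps$-minimal stable foliation alone connects segments, and the $\GGG_g$-condition on $E_2^c$ supplies the backward contraction of the $cu$-disk that, combined with the trapping of $\widetilde{\WWW}^{cs}_g$, yields the two-sided shadowing. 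Also note that the openness of thin trapping comes from \propref{prop:robust thin-trapped} (after \cite{crovisier2015essential}), not directly from \defref{def:trapped-plaque} as you state.
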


As a corollary of \thmref{thm:main}, we also have a similar result for the center consists of finite one-dimensional subbundles as following.

\begin{main}\label{thm:main+}
    Let $f: M\rightarrow M$ be a $C^{1+}$ partially hyperbolic diffeomorphism on a compact Riemannian manifold $M$ with a dominated splitting
    \begin{equation*}
        TM=E^s\oplus E_1^c \oplus\cdots \oplus E_k^c \oplus E^u,
    \end{equation*}
    where $\dim E_i^c = 1$ for $i=1,\dots, k$, and $\phi:M\rightarrow \RR$ a H\"older continuous potential. If $h_{\top}(f)- \max\{h^u(f), h^s(f)\}> \sup\phi - \inf\phi\geq 0$, the bundle $E^s\oplus E_1^c\oplus \cdots \oplus E_j^c$ ( for certain $j,\, 1 \le j \le k-1$) is thin trapped, and the stable foliation $\FFF^s(f)$ is minimal, then there exists a $C^1$ neighborhood $\UUU$ of $f$ such that $g$ has a unique equilibrium states for all $g\in \UUU \cap \Diff^{1+}(M)$.
\end{main}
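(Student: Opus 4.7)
My plan is to derive Theorem B from Theorem A by coarsening the given dominated splitting into a four-block splitting of the type assumed there. Concretely, I would set
$$\hat E_1^c := E_1^c\oplus \cdots \oplus E_j^c \quad\text{and}\quad \hat E_2^c := E_{j+1}^c\oplus \cdots \oplus E_k^c,$$
and consider the coarser decomposition $TM = E^s \oplus \hat E_1^c \oplus \hat E_2^c \oplus E^u$. Since grouping consecutive blocks of a dominated splitting preserves domination, this is again a dominated splitting, and $f$ is still partially hyperbolic with respect to it (the stable and unstable bundles being unchanged).

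I would then check that every hypothesis of Theorem A transfers verbatim to this grouped splitting. The bundles $E^s$ and $E^u$ are untouched, so the minimality of $\FFF^s(f)$ and the values of $h^u(f)$ and $h^s(f)$ are unchanged; in particular the entropy-gap condition $h_\top(f) - \max\{h^u(f),h^s(f)\} > \sup\phi - \inf\phi \ge 0$ continues to hold for the coarser splitting. The assumption that $E^s\oplus E_1^c \oplus \cdots \oplus E_j^c$ is thin trapped is, by construction, precisely the thin-trapping of $E^s \oplus \hat E_1^c$ required by Theorem A. The Hölder potential $\phi$ and the manifold $M$ are of course unchanged, so all the hypotheses of Theorem A are met by $(f,\phi)$ equipped with the grouped splitting, and the conclusion on the existence of a $C^1$ neighborhood $\UUU$ with uniqueness of equilibrium states throughout $\UUU\cap\Diff^{1+}(M)$ follows.

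The main obstacle I anticipate is the discrepancy between $\dim \hat E_i^c$, which is generically larger than one, and the standing hypothesis $\dim E_i^c=1$ in Theorem A. I would first inspect the proof of Theorem A to confirm that one-dimensionality of each central block is used only as a convenient bookkeeping device (for instance, to talk about a single scalar center Lyapunov exponent along each $\hat E_i^c$), while the substantive inputs—thin-trapping on the left block, strict positivity of the center Lyapunov exponents on $\hat E_2^c$ guaranteed by the entropy-gap hypothesis, and the Climenhaga–Thompson decomposition into good and bad orbit segments together with its Bowen/specification properties—depend only on the dominated structure and on the availability of further one-dimensional dominated refinements of $\hat E_1^c$ and $\hat E_2^c$ (which are supplied by the original finer splitting). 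If instead the proof of Theorem A uses one-dimensionality essentially, I would replace the single grouping above by an induction on $k$: at each step peel off the outermost one-dimensional subbundle on the appropriate side, absorbing it into the thin-trapped left block when its center exponents are negative (thin-trapping is preserved under forming dominated extensions in the stable direction) or treating it as the topmost central block in an intermediate application of Theorem A when the exponents are positive, the sign dichotomy being furnished in each case by the entropy-gap hypothesis and the dominated structure.
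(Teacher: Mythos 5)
Your core move — coarsening to the four-block splitting $TM = E^s \oplus \hat E_1^c \oplus \hat E_2^c \oplus E^u$ with $\hat E_1^c = E_1^c\oplus\cdots\oplus E_j^c$ and $\hat E_2^c = E_{j+1}^c\oplus\cdots\oplus E_k^c$ — is exactly what the paper does: its proof of Theorem~\ref{thm:main+} sets $E^{cs}=E^s\oplus E_1^c\oplus\cdots\oplus E_i^c$ and $E^{cu}=E_{i+1}^c\oplus\cdots\oplus E_k^c\oplus E^u$ and re-runs the Section~3 machinery. But two caveats are worth making precise. First, this is not a formal reduction to Theorem~\ref{thm:main}: that theorem's statement genuinely assumes one-dimensional central blocks, so you cannot invoke it as a black box on the grouped splitting; you have to repeat its proof, which is what the paper sketches. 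Second, the finer one-dimensional refinements of $\hat E_1^c$ and $\hat E_2^c$ are used in ways that go beyond ``bookkeeping.'' The decomposition $(\PPP_g,\GGG_g,\SSS_g)$ is no longer driven by $\log\|Dg|_{\hat E_1^c}\|$ and $\log\|Dg^{-1}|_{\hat E_2^c}\|$ (the natural choices if only the grouped bundles were available), but by the pointwise maxima and minima over the one-dimensional sub-blocks, $\beta_1(x)=\max_{1\le m\le j}\log\|Dg|_{E_m^c(x)}\|$ and $\widehat{\beta}_2(x)=\min_{j<m\le k}\log\|Dg^{-1}|_{E_m^c(x)}\|$; a Birkhoff-sum bound on $\beta_1$ controls every sub-block simultaneously, which is what the hyperbolicity estimates of Theorem~\ref{thm:loc-disks}/\ref{thm:k-disks} actually need. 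The finer splitting is also what makes the almost-expansivity estimate (Proposition~\ref{prop:expansivity+}, via \cite{liao2013entropy}) available. So the correct reading of your ``bookkeeping'' test is: one-dimensionality of the outermost central blocks $\hat E_1^c,\hat E_2^c$ is inessential, but one-dimensionality of the sub-blocks inside them is used substantively.

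Your fallback induction on $k$ should be dropped rather than kept in reserve. ``Absorbing a peeled-off central subbundle into the left block when its center exponents are negative'' conflates a property of the bundle with a property of a given invariant measure; the sign of $\lambda_m^c(f,\mu)$ depends on $\mu$, while the grouping must be fixed once and for all before one defines the decomposition of $M\times\NN$ and applies the Climenhaga–Thompson criterion. The entropy-gap hypothesis only tells you that for an \emph{ergodic equilibrium state} the signs split coherently at some index $i$; it does not furnish a measure-independent bundle-level dichotomy beyond the $j$ given by the thin-trapping hypothesis, which is precisely the $j$ used in the single grouping. In short: keep the first half of your plan and discard the inductive alternative.
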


\section{Preliminaries}
In this section, we give some relevant definitions.

\subsection{Topological pressure}
Let $M$ be a compact metric space, $f:M\rightarrow M$ a homeomorphism and $\phi:M\rightarrow \RR$ a continuous function. We denote the set of $f$-invariant Borel probability measures on $M$ by $\MMM_{f}(M)$. Given $n\geq:0, \delta >0$, and $x,y\in M$ we define the Bowen metric
\begin{equation}
    d_n(x,y) \coloneqq \max_{0\leq i \leq n-1} d(f^i x, f^i y) ,
\end{equation}
and the Bowen balls
\begin{equation}
    B_n(x,\delta) \coloneqq \{y\in X :\, d_n(x,y) < \delta\}.
\end{equation}

Given $\delta >0, n\geq 0$, we say a set $E\subset M$ is $(t,\delta)$-separated if for every distinct $x,y\in E$ we have $d_n(x,y)> \delta$. We write $X\times \NN$ as the space of finite orbit segments for $(X,f)$ by identifying $(x,n)$ with $(x,f(x),\dots, f^{n-1}(x))$. Given $\CCC\subset M\times \NN$ and $n\geq 0$ we write $\CCC_{n}=\{x\in M:\, (x,n)\in \CCC\}$.

Now fix a scale $\eps>0$, we write
\begin{equation*}
    \Phi_{\eps}(x,n) = \sup_{y\in B_{n}(x,\eps)} \sum_{k=0}^{n-1} \phi(f^k y).
\end{equation*}
In particular, $\Phi_0(x,n)= \sum_{k=0}^{n} \phi(f^k x)$. Given $\CCC\subset M\times \NN$ and $n\geq 0$, we consider the partition function
\begin{equation}\label{eq:part-func}
    \Lambda(\CCC, f, \phi, \delta, \eps, n)= \sup \left\{ \sum_{x\in E} e^{\Phi_{\eps}(x,n)} :\, E\subset \CCC_n \mbox{ is } (t,\delta)\mbox{-separated} \right\}.
\end{equation}
We call a $(n,\delta)$-separated set that attains the supremum in \eqref{eq:part-func} maximizing for $\Lambda(\CCC, f, \phi, \delta, \eps, n)$. When $\eps=0$, we simply write $\Lambda(\CCC, f, \phi, \delta, n)$ instead of $\Lambda(\CCC, f, \phi, \delta, 0, n)$.

The pressure of $\phi$ on $\CCC$ at scale $\delta, \eps$ is given by
\begin{equation*}
    P(\CCC,f,\phi,\delta,\eps)=\limsup_{n\rightarrow \infty} \frac1n \log \Lambda(\CCC,f,\phi,\delta,\eps,n).
\end{equation*}
It is clear that $\Lambda_n(\Phi,\delta,\eps)$ is monotonic in both $\delta$ and $\eps$, that is decrease for $\delta$ and increase for $\eps$. Thus same is true for $P$. Again, we write $P(\CCC, f, \phi, \delta)$ in place of $P(\CCC, f, \phi, \delta, 0)$, and let
\begin{equation*}
    P(\CCC, f, \phi)=\lim_{\delta\rightarrow 0} P(\CCC, f, \phi, \delta),
\end{equation*}
since the function $P(\CCC, f, \phi,\delta)$ is non-decreasing as $\delta\rightarrow 0$, so that the limit in the right hand of above equation exists. When $\CCC=M\times \NN$, it is the notion of topological pressure of $\phi$ with respect to $f$ denoted by $P(f,\phi)$.

Recall that $\MMM_f(M)$ is the space of $f$-invariant Borel probability measures on $M$. Let $\MMM_{f}^{e}$ be the set of ergodic measures in $\MMM_f(M)$. The classical variational principle shows that
\begin{equation*}
    P(f,\phi) = \sup_{\mu\in \MMM_f(M)} \left\{ h_{\mu}(f) + \int \phi \,d\mu \right\} =\sup_{\mu\in \MMM_f^e(M)} \left\{ h_{\mu}(f) + \int \phi \,d\mu \right\}
\end{equation*}
An invariant measure $\mu\in \MMM_f(X)$ is called an equilibrium state of $\phi$ if $\mu$ attain this supremum. When $\phi=0$, the equilibrium states is called measures of maximal entropy. For a invariant measure $\mu$, we define the pressure of $\mu$ as
\begin{equation*}
    P_{\mu}(f,\phi)\coloneqq h_{\mu}(f) + \int \phi \,d\mu.
\end{equation*}

\subsection{Criteria for uniqueness of equilibrium states}
In this section, We will introduce a criteria developed by \cite{climenhaga2016unique, pacifico2022existence} concerning the existence of a unique equilibrium state.

\begin{definition}
    A decomposition for $\DDD \subset M\times \NN$ consists of three collections $(\PPP,\GGG,\SSS) \subset M\times \NN$ and three functions $p,g,s: \DDD \rightarrow \NN$ such that for every $(x,n) \in \DDD$, the values $p=p(x,n), g=g(x,n)$, and $s=s(x,n)$ satisfy $n=p+g+s$, and
    \begin{equation*}
        (x,p)\in \PPP,\enspace (f^px, g)\in \GGG, \enspace (f^{p+g}x,s)\in \SSS.
    \end{equation*}
\end{definition}
If $\DDD= M\times \NN$, we say that $(\PPP,\GGG,\SSS)$ is a decomposition for $(M,f)$.

\subsubsection{Specification}
The specification play a crucial role to obtain the uniqueness of equilibrium states or measures of maximal entropy.

\begin{definition}
    A collection of orbit segments $\GGG\subset M\times \NN$ has weak specification at scale $\delta$ if there exist $\tau \in \NN$ such that for every $\{(x_i,n_i)\}_{i=1}^k \subset \GGG$ there exist a point $y$ and a sequence of ``gluing times'' $\tau_1,\dots, \tau_{k-1}\in \NN$ with $\tau_i\leq \tau$ such that for $N_j=\sum_{i=1}^{j} n_i + \sum_{i=1}^{j-1} \tau_i$ and $N_0=\tau_0=0$, we have
    \begin{equation*}
        d_{n_i}(f^{N_{j-1}+\tau_{j-1}}y, x_j)< \delta \mbox{ for every } 1\leq j\leq k.
    \end{equation*}
    We say that $\GGG\subset M\times\NN$ has weak specification if it has weak specification at every scale $\delta>0$.
\end{definition}

Intuitively, the ``specification" property say that for any finite orbit segments in $\GGG$, there exists a point $y\in M$ which $\delta$-shadows each orbit segment with gaps of length $\tau_i$ between each segment and the gap $\tau_i$ has a uniform bound $\tau$.

\begin{remark}
    Because weak specification is the only specification property which we consider in this paper. So that, we henceforth use the term specification for simplicity.
\end{remark}

\subsubsection{The Bowen property}
The Bowen property tell us some potentials satisfying certain bounded distortion property is what we expect in order to obtain uniqueness of equilibrium states. It was first introduced by Bowen \cite{bowen1974some}.

\begin{definition}\label{def:bowen-property}
    Given $\GGG\subset M\times \NN$, a continuous potential $\phi$ has the Bowen property on $\GGG$ at scale $\eps$ if there exists $K>0$ so that
    \begin{equation*}
        \sup\{ | \Phi_0(x,n)- \Phi_0(y,n) | :\, (x,n)\in \GGG, y\in B_n(x,\eps)  \}\leq K.
    \end{equation*}
    We say that $\phi$ has the Bowen property on $\GGG$ if there exists $\eps>0$ such that $\phi$ has the Bowen property on $\GGG$ at scale $\eps$.
\end{definition}
We call such $K$ the distortion constant as in \defref{def:bowen-property}.

\subsubsection{Expansivity}
Given $f: M\rightarrow M$ a homeomorphism. For $x\in M$ and $\eps>0$, let us consider the bi-infinite Bowen ball
\begin{equation*}
    \Gamma_{\eps}(x) = \{ y\in M :\, d(f^n x,f^n y)\leq \eps \mbox{ for all } n\in \ZZ \}.
\end{equation*}
Note that $\Gamma_{\eps}=\bigcap_{n\in \ZZ} f^{n} \overline{B}_{2n}(x,\eps)$ is compact for every $x,\eps$.

We say that $f$ is expansive if there exists a $\eps>0$ such that $\Gamma_{\eps}(x)=\{x\}$ for every $x\in M$. For a non-expansive system we can consider the set of non-expansive points at scale $\eps$
\begin{equation}
    \NE(\eps)=\bigl\{x\in X :\, \Gamma_{\eps}(x)\neq \{x\}\bigr\},
\end{equation}
it is clear that $\NE(\eps)$ is a $f$-invariant set. We say a $f$-invariant measure $\mu$ is almost expansive at scale\, $\eps$ \,if\, $\mu(\NE(\eps))=0$.

\begin{definition}
    For a potential $\phi$, the pressure of obstructions to expansivity at scale $\eps$ is defined by
    \begin{equation*}
        \begin{split}
            \Pexp(f,\phi,\eps)&=\sup_{\mu\in \MMM_{f}^e(M)}\left\{h_{\mu}(f) + \int \phi \, d\mu :\, \mu(\NE (\eps))=0 \right\},\\
            &=\sup_{\mu\in \MMM_{f}^e(M)}\left\{h_{\mu}(f) + \int \phi \, d\mu :\, \mu(\NE (\eps))=1 \right\}.
        \end{split}
    \end{equation*}
    And scale-free quantity is
    \begin{equation*}
        \Pexp(f,\phi)=\lim_{\eps\rightarrow 0}\Pexp(f,\phi,\eps).
    \end{equation*}
\end{definition}

Note that $\Pexp(f,\phi,\eps)$ is non-increasing as $\eps\rightarrow 0$, so that the limit in the above definition exists.

We say that $f$ is entropy expansive or $h$-expansive if there exists a $\eps>0$ such that $h_{\top}(\Gamma_{\eps}(x))=0$ for every $x\in M$.

\subsubsection{A criteria to uniqueness of equilibrium states}
Now we provide a criteria to uniqueness of equilibrium states from \cite{climenhaga2016unique}. We cite here an improved version of this criteria in \cite{pacifico2022existence} in the following.

\begin{theorem}{\rm (\cite[Theorem A]{pacifico2022existence})}\label{thm:ct-criteria}
    Let $f:M\rightarrow M$ be a homeomorphism on a compact metric space and $\phi: M\rightarrow \RR$ a continuous potential. Suppose there are $\eps$ and $\delta$ with $\eps>2000\delta$ such that $\Pexp(f,\phi,\eps)< P(f,\phi)$ and $\DDD\subset M\times \NN$ which admits a decomposition $(\PPP, \GGG, \SSS)$ with the following properties:
    \begin{itemize}
        \item[(1)] $\GGG$ has weak specification at scale $\delta$;
        \item[(2)] $\phi$ has the Bowen property at scale $\eps$ on $\GGG$;
        \item[(3)] $P(\DDD^{c}\cup \PPP\cup \SSS, f, \phi,\delta,\eps)< P(f,\phi)$.
    \end{itemize}
    Then there exists a unique equilibrium state for the potential $\phi$.
\end{theorem}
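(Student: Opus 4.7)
The plan is to verify existence and uniqueness by adapting Bowen's classical strategy to this non-uniform setting, following the broad scheme of \cite{climenhaga2016unique,pacifico2022existence}. I would split the argument into an existence step and a uniqueness step, both hinging on transferring mass from the ``bad'' orbit segments in $\DDD^c\cup\PPP\cup\SSS$ onto the ``good'' segments in $\GGG$.

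For existence, take a sequence $\mu_n$ of ergodic measures with $h_{\mu_n}(f)+\int\phi\,d\mu_n\to P(f,\phi)$ and a weak-$*$ limit $\mu$. The potential term passes to the limit by continuity; for the entropy term, the hypothesis $\Pexp(f,\phi,\eps)<P(f,\phi)$ forces the tail of the sequence to give full mass to the expansive set $M\setminus\NE(\eps)$, and then upper semi-continuity of entropy on almost-expansive measures at scale $\eps$ yields $\limsup h_{\mu_n}(f)\le h_\mu(f)$, making $\mu$ an equilibrium state.

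For uniqueness, the core step is to show that $\GGG$ alone carries the total pressure. Hypothesis (3) gives that $\DDD^c\cup\PPP\cup\SSS$ has strictly smaller pressure, while the decomposition yields a subadditive bound
\[
\Lambda(M\times\NN,f,\phi,\delta,\eps,n)\le \sum_{p+g+s=n}\Lambda(\PPP,\cdot,p)\,\Lambda(\GGG,\cdot,g)\,\Lambda(\SSS,\cdot,s)+\Lambda(\DDD^c,\cdot,n)
\]
(suppressing the common arguments $f,\phi,\delta,\eps$) that forces $P(\GGG,f,\phi,\delta,\eps)=P(f,\phi)$. Weak specification on $\GGG$ at scale $\delta$ then allows, for any maximizing $(n,\delta)$-separated set on $\GGG$, the construction of a comparable separated set on $M$ at the larger scale $\eps/2$, with the margin $\eps>2000\delta$ absorbing the uniform gluing gaps $\tau_i\le\tau$. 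Combined with the Bowen property at scale $\eps$, this produces a two-sided Gibbs estimate
\[
C^{-1}e^{\Phi_0(x,n)-nP(f,\phi)}\le \mu(B_n(x,\eps))\le Ce^{\Phi_0(x,n)-nP(f,\phi)}
\]
for any ergodic equilibrium state $\mu$ and $(x,n)\in\GGG$. Two ergodic equilibrium states then give comparable measure to Bowen balls centred on $\GGG$, hence are mutually absolutely continuous and therefore equal.

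The principal obstacle is orchestrating the interaction between weak specification and the Bowen property: the length-$\tau$ gaps between good segments produce only approximate shadowing, so the constants must be chosen so that the Bowen distortion bound on $\GGG$ survives the enlargement of Bowen balls induced by the gluing. The inequality $\eps>2000\delta$ is precisely the margin needed for this to go through, and sharpening this constant is one of the main technical improvements of \cite{pacifico2022existence} over \cite{climenhaga2016unique}. A second delicate point is upgrading the partition-function pressure estimate on $\GGG$ to a genuine Gibbs measure: one builds a measure as a weak-$*$ limit of measures equidistributed on $(n,\delta)$-separated subsets of $\GGG_n$, and uses hypothesis (3) to discard orbits spending too much time outside $\GGG$, ensuring the limit is both an equilibrium state and the Gibbs-like reference measure against which uniqueness is tested.
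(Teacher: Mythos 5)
This theorem is stated in the paper as a citation to \cite{pacifico2022existence}; the paper itself gives no proof, so there is no internal argument to compare against. Judged on its own, your sketch captures the correct high-level architecture of the Climenhaga--Thompson machinery, but two of the central steps are stated in a way that would not go through as written.

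First, the subadditive bound
\[
\Lambda(M\times\NN,f,\phi,\delta,\eps,n)\le \sum_{p+g+s=n}\Lambda(\PPP,\cdot,p)\,\Lambda(\GGG,\cdot,g)\,\Lambda(\SSS,\cdot,s)+\Lambda(\DDD^c,\cdot,n)
\]
elides the crucial scale bookkeeping. A $(n,\delta)$-separated set in $M$ does not simply restrict to $(p,\delta)$-, $(g,\delta)$-, $(s,\delta)$-separated sets on the three pieces; one must change scales between the inner and outer scales $\delta$ and $\eps$ across the decomposition and across the gluing gaps of length $\tau$, and this is exactly where the constants (formerly $\eps>40\delta$ in \cite{climenhaga2016unique}, improved to $\eps>2000\delta$ with weaker expansivity hypotheses in \cite{pacifico2022existence}) enter. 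Presenting the partition-function inequality as if the scales were common understates the hard part.

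Second, and more seriously, the two-sided Gibbs estimate
\[
C^{-1}e^{\Phi_0(x,n)-nP(f,\phi)}\le \mu(B_n(x,\eps))\le Ce^{\Phi_0(x,n)-nP(f,\phi)},\qquad (x,n)\in\GGG,
\]
is asserted for \emph{any} ergodic equilibrium state $\mu$, and then two equilibrium states are declared mutually absolutely continuous by comparison. This is not how the argument runs and is not true a priori. What actually happens is that one constructs a single reference measure $m$ (as a weak-$*$ limit of weighted counting measures on maximizing separated sets), proves \emph{both} Gibbs bounds for that $m$, and shows $m$ is an equilibrium state. For an arbitrary ergodic equilibrium state $\nu$ one only gets an \emph{upper} Gibbs-type bound directly; the conclusion $\nu=m$ then comes from combining this upper bound for $\nu$ with the \emph{lower} bound for $m$, the almost-expansivity of $\nu$ forced by $\Pexp(f,\phi,\eps)<P(f,\phi)$, and ergodicity. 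Your phrasing skips the asymmetry between the constructed reference measure and an arbitrary competitor, which is exactly the point of the proof. Filling this gap requires spelling out the construction of $m$ from separated sets on $\GGG$, verifying its Gibbs property using specification and the Bowen property, and only then running the absolute-continuity argument against it.
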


\subsubsection{Existence of equilibrium states}
Let $f:M\rightarrow M$ be a partially hyperbolic diffeomorphism and $\phi:M\rightarrow \RR$ a continuous potential. It is difficult to prove that some partially hyperbolic system has equilibrium states. However, in sometimes we can solve it. Recall that the pressure of $\mu$ is $P_{\mu}(f,\phi)= h_{\mu}(f) + \int \phi \,d\mu$. Since $\MMM_f(M)$ is a compact space, if the pressure is upper semicontinuous, then it can achieves the supremum in variational principle, so $(f,\phi)$ has an equilibrium state.

For homeomorphisms, Bowen proved \cite{bowen1972entropy} that the metric entropy is upper semicontinuous whenever $(M,f)$ is entropy expansive. In particular, the map $P_{\mu}(f,\phi)$ is upper semicontinuous, so that $(f,\phi)$ has an equilibrium state for any continuous potentials. For partially hyperbolic systems which the central bundle is decomposed into finite many one-dimensional subbundles, it is showed that such systems are entropy expansive \cite{Lorenzo2012entropy, liao2013entropy, arbieto2025existence}, and thereby they have equilibrium states.

\subsection{Partial hyperbolicity with Dominated Splittings}
For partially hyperbolic diffeomorphisms, it is well known that there are unique foliations $\FFF^{\sigma}$ tangent to the stable and unstable distributions $E^{\sigma}$ for $\sigma=s,u$ \cite{hirsch1977invariant}.These foliations are called stable and unstable foliations respectively. The leaf of $\FFF^{\sigma}$ containing $x$ corresponds to the stable or unstable manifold $W^{\sigma}(x)$, for $\sigma=s,u$.

In the case of dominated splitting, we need a structure so-called locally invariant plaque families introduced in \cite[Theorem 5.5]{hirsch1977invariant}. We cite a version of the plaque family theorem due to \cite[Lemma 3.5]{crovisier2015essential}.

\begin{theorem} {\rm (\cite{hirsch1977invariant,crovisier2015essential})} \label{thm:loc-palque}
    Suppose that a diffeomorphism $f:M\rightarrow M$ admits a dominated splitting $TM=E\oplus F$. Then there exist a $C^1$ neighborhood $\UUU$ of $f$, a number $\rho >0$, and a continuous family of embeddings
    \begin{equation*}
        \UUU\times M \ni (g,x) \mapsto \WWW_{g,x} \in \Emb^1(\RR^{\dim E},M)
    \end{equation*}
    such that for every $g\in\UUU$ and every $x\in M$ we have
    \begin{itemize}
        \item $T_x \WWW_{g,x}(\RR^{\dim E})=E_g(x)$, and
        \item $g(\WWW_{g,x}(B_{\rho}(0)))\subset \WWW_{g,g(x)}(\RR^{\dim E})$.
    \end{itemize}
\end{theorem}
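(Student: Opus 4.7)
The plan is to construct the family $\WWW_{g,x}$ by the graph transform method of Hirsch--Pugh--Shub, as refined in \cite{crovisier2015essential}, adapted to the setting of a dominated splitting without any contraction or expansion hypothesis on $E$ or $F$.

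\textbf{Step 1 (Robustness of the splitting).} Dominated splittings are $C^1$-open: a compactness argument based on the cone criterion shows there is a $C^1$ neighborhood $\UUU$ of $f$ and a map $g\mapsto (E_g,F_g)$ assigning to each $g\in\UUU$ a $Dg$-invariant dominated splitting $TM=E_g\oplus F_g$ with $\dim E_g=\dim E$, varying continuously jointly in $(g,x)$, and with uniform constants $C,\lambda$. After replacing the Riemannian metric by an adapted one (in the sense of Gourmelon), we may assume
\begin{equation*}
    \|Dg|_{E_g(x)}\|\cdot\|(Dg|_{F_g(x)})^{-1}\|<\lambda<1 \quad \text{for every } g\in\UUU \text{ and every } x\in M.
\end{equation*}

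\textbf{Step 2 (Graph transform in exponential charts).} For each $g\in\UUU$ and $x\in M$ use the exponential map to trivialize a uniform neighborhood of $x$, obtaining the local representative $\hat g_x := \exp_{g(x)}^{-1}\circ g\circ \exp_x$, defined on a ball of radius $r$ in $T_xM$ whose size is uniform in $(g,x)$. With respect to the splitting $T_xM=E_g(x)\oplus F_g(x)$, introduce the Banach space $\Sigma_g$ of continuous families $\sigma=(\sigma_x)_{x\in M}$ in which each $\sigma_x:B(0,\rho)\subset E_g(x)\rightarrow F_g(x)$ is $1$-Lipschitz with $\sigma_x(0)=0$, endowed with the supremum norm. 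Define the graph transform $\mathcal T_g:\Sigma_g\rightarrow\Sigma_g$ by declaring $\mathrm{graph}(\mathcal T_g(\sigma)_x)$ to be the connected component through $0$ of $\hat g_x^{-1}(\mathrm{graph}(\sigma_{g(x)}))\cap B(0,\rho)$. The cone-field invariance produced by domination guarantees that, for a uniform $\rho>0$ small enough, this set is again the graph of a $1$-Lipschitz map defined on all of $B(0,\rho)\subset E_g(x)$, so $\mathcal T_g$ is well defined.

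\textbf{Step 3 (Fixed point, regularity, and parameter dependence).} Reading the domination estimate off in exponential charts yields a uniform contraction $\|\mathcal T_g(\sigma)-\mathcal T_g(\tau)\|_{C^0}\leq \lambda\|\sigma-\tau\|_{C^0}$, so $\mathcal T_g$ has a unique fixed point $\sigma^{\ast}_g\in\Sigma_g$. Setting $\WWW_{g,x}(v):=\exp_x(v+\sigma^{\ast}_{g,x}(v))$ for $v\in B(0,\rho)\subset E_g(x)$ produces a continuous family of embedded discs, and the local invariance $g(\WWW_{g,x}(B(0,\rho)))\subset \WWW_{g,g(x)}(\RR^{\dim E})$ is immediate from the definition of $\mathcal T_g$ as a pullback. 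A bootstrap --- running the analogous contraction on the associated bundle of linear $1$-jets, using the invariant cone opening as norm --- upgrades $\sigma^{\ast}_g$ to a $C^1$ section with $D\sigma^{\ast}_{g,x}(0)=0$, so each $\WWW_{g,x}$ is a $C^1$-embedding tangent to $E_g(x)$ at $x$. Finally, $\mathcal T_g$ depends continuously on $g$ in $\UUU$, and the parametric Banach fixed point theorem delivers the required joint continuity of $(g,x)\mapsto \WWW_{g,x}$ into $\Emb^1(\RR^{\dim E},M)$.

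\textbf{Main obstacle.} The distribution $E_g$ is in general only continuous, so $\WWW_{g,x}$ cannot be recovered by direct integration; the technical core is to set up the graph transform on a function space that simultaneously encodes uniform continuity of the family across $M$ and $C^1$ regularity along each plaque, and to verify the contraction using domination alone. Choosing a single $\rho>0$ that works for all $g\in\UUU$ --- so that $\hat g_x$ is defined on $B(0,\rho)$ and so that cone invariance pushes graphs back inside $B(0,\rho)$ --- is the most delicate trade-off, as it forces the cone openings, the $C^1$ modulus of $g$, and the loss in the domination estimate to be balanced uniformly across the perturbation neighborhood.
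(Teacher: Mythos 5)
The statement you are proving is not proved in the paper at all: it is a cited result, attributed to Hirsch--Pugh--Shub (Theorem 5.5 of \cite{hirsch1977invariant}) and to Crovisier (Lemma 3.5 of \cite{crovisier2015essential}). So there is no in-paper proof to compare against; your sketch is a reconstruction of the classical graph-transform argument, which is indeed the approach taken in those references. The overall architecture --- robustness of the dominated splitting, graph transform in exponential charts on a space of Lipschitz sections, Banach fixed point, $1$-jet bootstrap for $C^1$ regularity, and parametric continuity --- is the right one.

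That said, Step 3 as written does not close. You claim the pullback graph transform is a $C^0$-contraction with constant $\lambda$, but that is false under domination alone. In block-diagonal linear coordinates the transform is $\mathcal T(\sigma)_x(v)=A_F^{-1}\,\sigma_{g(x)}(A_E v)$, so
\begin{equation*}
\bigl\|\mathcal T(\sigma)_x-\mathcal T(\tau)_x\bigr\|_{C^0}\le \|A_F^{-1}\|\cdot\|\sigma-\tau\|_{C^0},
\end{equation*}
and $\|A_F^{-1}\|$ need not be $<1$: domination only controls the product $\|A_E\|\cdot\|A_F^{-1}\|$. The fix is to exploit $\sigma(0)=\tau(0)=0$ and use a scale-free metric such as $d(\sigma,\tau)=\sup_{v\ne 0}\|\sigma(v)-\tau(v)\|/\|v\|$, for which one does get $d(\mathcal T(\sigma),\mathcal T(\tau))\le\|A_E\|\cdot\|A_F^{-1}\|\cdot d(\sigma,\tau)\le\lambda\,d(\sigma,\tau)$; equivalently one works directly on the space of $1$-jets at the zero section. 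Related to this, the assertion in Step 2 that the pullback graph is ``defined on all of $B(0,\rho)$'' also does not hold as stated: when $Dg$ expands $E$ the $E$-footprint of $\hat g_x^{-1}(\mathrm{graph}\,\sigma_{g(x)})$ shrinks, and one needs the usual Lipschitz-extension (or bump-function cutoff) device to keep the transform acting on a fixed function space before restricting at the end. Both issues are standard HPS technicalities with well-known resolutions, but as written the contraction step is the point where the proof breaks, and you should state the correct metric.
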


For a partially hyperbolic diffeomorphism $f$ with dominated splitting $TM=E^s\oplus E_1^c\oplus E_2^c\oplus E^u$, we write $E^{cs}=E^s\oplus E_1^c$ and $E^{cu}=E_2^c\oplus E^u$. Since $f$ has dominated splitting $E^{cs}\prec E^{cu}$, according to \thmref{thm:loc-palque} there is a $C^1$ neighborhood $\UUU$ of $f$ such that for every $g\in \UUU$ there exists a locally invariant plaque families $\WWW_g^{cs}$ tangent to $E_g^{cs}$. And the inverse of $f$ has a dominated splitting $E^{cu}\prec E^{cs}$, so that there is a $C^1$ neighborhood $\UUU$ of $f$ such that for every $g\in \UUU$ there exists a locally invariant plaque families $\WWW_g^{cu}$ tangent to $E_g^{cu}$.

\begin{remark}\label{rmk:subfoliation}
    The uniqueness of stable and unstable foliations implies, via a standard argument {\rm (\cite[Theorem 6.1(e)]{hirsch1977invariant})}, the $W_{loc}^s$ subfoliates $\WWW^{cs}$, and the $W_{loc}^u$ subfoliates $\WWW^{cu}$ for some suitable number $loc$.
\end{remark}

Recall the thin trapped plaque families as in \defref{def:trapped-plaque}. This notion allows us to construct a small trapped plaque family and plays an important role in the proof of specification property.

\begin{proposition}{\rm (\cite[Lemma 2.8]{crovisier2015essential})} \label{prop:trapped-palque}
    Let $TM=E\oplus F$ be a dominated splitting over $M$ such that there exists a thin trapped plaque family $\WWW$ tangent to $E$. Then any other locally invariant plaque family tangent to $E$ is thin trapped.
\end{proposition}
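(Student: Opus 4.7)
The plan is to transfer the thin-trapping property from the hypothesized family $\WWW$ to any other locally invariant plaque family $\WWW'$ tangent to $E$, using the fact that domination forces any two such families to be asymptotically close along forward orbits.

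First, I would fix local coordinates along the orbit of an arbitrary point $x$ that split $T_{f^n(x)} M$ as $E_{f^n(x)} \oplus F_{f^n(x)}$. In these coordinates, and after an obvious reparameterization, both plaques $\WWW_{f^n(x)}$ and $\WWW'_{f^n(x)}$ become graphs of $C^1$ maps from a neighborhood of $0 \in E_{f^n(x)}$ into $F_{f^n(x)}$ that vanish at the origin together with their derivatives. Comparing the two plaques thus reduces to comparing two such graph-maps with the same $1$-jet at $0$.

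Second, I would invoke the standard graph-transform argument of \cite{hirsch1977invariant} together with the domination inequality $\|Df^n|_{E(x)}\| \cdot \|Df^{-n}|_{F(f^n(x))}\| \le C\lambda^n$ to conclude that, for any small disk $D \subset \WWW'_x$, the forward iterates $f^n(D)$ stay inside an arbitrarily small neighborhood of the corresponding piece of $\WWW_{f^n(x)}$ for all $n \ge 0$. This is the usual contraction-in-the-$F$-direction phenomenon: vertical displacements of a graph over $E$ decay, relative to the horizontal extent, under a dominated splitting.

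Third, given a neighborhood $S$ of the zero section of $E$, the thin-trapping hypothesis on $\WWW$ produces a continuous family $(\varphi_x)$ of $C^1$ diffeomorphisms of $E_x$ with $\varphi_x(B(0,1)) \subset S$ and
\begin{equation*}
f\bigl(\overline{\WWW_x \circ \varphi_x(B(0,1))}\bigr) \subset \WWW_{f(x)} \circ \varphi_{f(x)}(B(0,1)).
\end{equation*}
I would then build the family $(\varphi'_x)$ for $\WWW'$ by composing $\varphi_x$ with a small continuous $C^1$ correction that matches the two parameterizations near the origin. The common $1$-jet of $\WWW_x$ and $\WWW'_x$ at $x$ makes this correction arbitrarily small, uniformly in $x$; both $\varphi'_x(B(0,1)) \subset S$ and the analogous trapping inclusion for $\WWW'$ then follow by combining the above containment with the asymptotic-closeness estimate from the second step. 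The main obstacle is balancing the size of this correction against the room left inside $\WWW_{f(x)} \circ \varphi_{f(x)}(B(0,1))$ after one iterate: the correction must be small enough to keep the source plaque inside $S$, yet its image under $f$ must fit inside $\WWW'_{f(x)} \circ \varphi'_{f(x)}(B(0,1))$. Achieving both requires uniform $C^1$ estimates on the difference between $\WWW_x$ and $\WWW'_x$, and it is exactly here that the dominated splitting does the decisive work.
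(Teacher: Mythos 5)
Your high-level plan — transfer the trapping datum from $\WWW$ to $\WWW'$ by showing the two families are locally coherent and then reparameterizing — is the right one, and steps 1 and 3 are set up sensibly. (The paper itself cites \cite[Lemma~2.8]{crovisier2015essential} and gives no proof, so there is no ``official'' argument to compare against; I am evaluating the proposal on its own.) The gap is in step~2, and it is substantive.

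First, the asserted mechanism runs in the wrong direction. In the graph picture over $E$, the domination $\|Df^n|_{E(x)}\|\cdot\|Df^{-n}|_{F(f^n x)}\|\le C\lambda^n$ means that $F$ dominates $E$, so \emph{forward} iteration expands the $F$-displacement of a graph over $E$ relative to its $E$-extent: two $C^1$ graphs over $E$ with the same $1$-jet at $0$ typically drift \emph{apart} under pushforward, not together. The contraction supplied by domination goes \emph{backward}: if $\Delta_n$ denotes the $F$-displacement between the two plaques at the shadowing $E$-coordinate $u_n$ along the orbit, one gets $|\Delta_0|\lesssim\|Df^{-n}|_{F(f^n x)}\|\,|\Delta_n|$, and one must then feed in $|\Delta_n|=o(|u_n|)$ (both graphs tangent to $E$, uniformly) together with $|u_n|\lesssim\|Df^n|_{E(x)}\|\,|u_0|$ to obtain $|\Delta_0|\lesssim C\lambda^n\,o(1)\to 0$. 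So the correct statement is a local uniqueness/coherence lemma, not a forward convergence of $f^n(D)$ to $\WWW_{f^n(x)}$.

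Second, and more importantly, step~2 as written never uses the trapping hypothesis, yet without it the forward iteration is not even well-defined at the needed scale: local invariance only controls $f(\WWW'_x\cap B(0,\rho))\subset\WWW'_{f(x)}$ for points within the plaque-family radius, and nothing in ``$D\subset\WWW'_x$ small'' forces $f^n(D)$ to remain within that radius for all $n\ge 0$. It is precisely the thin trapping of $\WWW$ that furnishes a family of small sub-plaques $P_x=\WWW_x\circ\varphi_x(B(0,1))$ with $f(\overline{P_x})\subset P_{f(x)}$; applying the backward coherence estimate to a point $y\in P_x$ (whose forward orbit therefore stays in the uniformly small sets $P_{f^n(x)}$) shows that $P_x\subset\WWW'_x$. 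Note the disk one should iterate lies in $\WWW_x$, not in $\WWW'_x$ as in your step~2. Once $P_x\subset\WWW'_x$ is in hand, defining $\varphi'_x=(\WWW'_x)^{-1}\circ\WWW_x\circ\varphi_x$ and shrinking the initial target $S$ by a fixed factor (to absorb the near-identity change of coordinates $(\WWW'_x)^{-1}\circ\WWW_x$, which has identity $1$-jet at $0$ uniformly in $x$) yields the thin trapping for $\WWW'$; your step~3 is essentially this, and the ``main obstacle'' you flag at the end dissolves once the coherence lemma is stated and used in the correct (backward, trapping-dependent) form.
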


\begin{proposition}{\rm (\cite[Lemma 3.6]{crovisier2015essential})} \label{prop:robust thin-trapped}
    Let $TM=E\oplus F$ be a dominated splitting over $M$, and $E$ is thin trapped. Let $(\widetilde{\WWW}_g)$ be a continuous and uniformly locally invariant collection of plaque families tangent to the bundles $(E_g)$ for $g$ $C^1$-close to $f$.

    Then for every $\rho>0$, there exist $C^1$ neighborhood $\UUU$ of $f$ and a continuous collection of plaque families $(\WWW_g)_{g\in \UUU}$ tangent to the bundles $(E_g)$, which is trapped by $g$ and whose plaques have diameter smaller than $\rho$. Moreover we have $\WWW_g(x)\subset\widetilde{\WWW}_g(x)$ for every $g\in \UUU$ and $x\in M$.
\end{proposition}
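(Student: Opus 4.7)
The plan is to extract a small trapped subfamily from $\widetilde{\WWW}_f$ using the thin trapped hypothesis at $f$, and then invoke compactness of $M$ together with the $C^1$-continuous dependence of the data to propagate the trapping inclusion to all $g$ in a suitable neighborhood of $f$.

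First I would apply the thin trapped hypothesis to $\widetilde{\WWW}_f$. Since $E$ is thin trapped, \propref{prop:trapped-palque} implies that every locally invariant plaque family tangent to $E$ is thin trapped; in particular $\widetilde{\WWW}_f$ is. Picking a neighborhood $S$ of the zero section in $E$ so that plaques $\widetilde{\WWW}_{f,x}(S\cap E_x)$ have diameter less than $\rho$ for every $x$, the thin trapped property furnishes a continuous family $(\varphi_x)_{x\in M}$ of $C^1$-diffeomorphisms of the fibers with $\varphi_x(B(0,1))\subset S$ and
\[
f\bigl(\overline{\widetilde{\WWW}_{f,x}\circ \varphi_x(B(0,1))}\bigr) \subset \widetilde{\WWW}_{f,f(x)}\circ \varphi_{f(x)}(B(0,1)).
\]
Setting $\WWW_{f,x}:=\widetilde{\WWW}_{f,x}\circ\varphi_x(B(0,1))$ already delivers the required trapped family at $g=f$.

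Next I would upgrade this pointwise inclusion to a uniform strict inclusion. For each $x$ the compact set $f(\overline{\WWW_{f,x}})$ lies in the relative interior of $\widetilde{\WWW}_{f,f(x)}\circ \varphi_{f(x)}(B(0,1))$ inside the ambient plaque $\widetilde{\WWW}_{f,f(x)}$. Continuity of $x\mapsto\widetilde{\WWW}_{f,x}$ and $x\mapsto \varphi_x$, combined with compactness of $M$, provides a uniform margin $\eta>0$ such that one may slightly shrink the target disk $\varphi_{f(x)}(B(0,1))$ (or equivalently enlarge the source) and still have the inclusion hold with $\eta$-room to spare, uniformly in $x$.

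Finally I would extend the construction to nearby diffeomorphisms. Keeping the same family $(\varphi_x)$, both sides of the desired inclusion
\[
g\bigl(\overline{\widetilde{\WWW}_{g,x}\circ \varphi_x(B(0,1))}\bigr) \subset \widetilde{\WWW}_{g,g(x)}\circ \varphi_{g(x)}(B(0,1))
\]
depend continuously on $g$ in the $C^1$-topology, since by hypothesis $g\mapsto \widetilde{\WWW}_g$ is continuous and $g\mapsto g$ and $g\mapsto Dg$ are $C^0$-continuous on $\UUU$. Using the uniform margin $\eta$ and compactness of $M$ once more, I pick a $C^1$ neighborhood $\UUU$ of $f$ in which the inclusion above persists for all $g\in\UUU$ and all $x\in M$. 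Defining $\WWW_{g,x}:=\widetilde{\WWW}_{g,x}\circ\varphi_x(B(0,1))$ produces the required continuous family of $g$-trapped plaques of diameter less than $\rho$, contained in $\widetilde{\WWW}_g$. The main obstacle I expect is making the uniform margin step rigorous: one must take care that the compactness argument is performed in a common ambient space, which is cleanest when the inclusions are pulled back to the fibers of $E$ via the embeddings $\widetilde{\WWW}_{g,x}$, and uniform continuity in $(g,x)$ of the resulting fiberwise maps is tracked throughout.
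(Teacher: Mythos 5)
The paper cites this result from Crovisier--Pujals (their Lemma 3.6) without reproducing the proof, so there is no in-paper argument to match against; your reconstruction follows the expected strategy. The three stages you outline -- (i) invoke the thin trapped property of $\widetilde{\WWW}_f$ (using \propref{prop:trapped-palque}) with a small enough $S$ to get a trapped subfamily $\widetilde{\WWW}_{f,x}\circ\varphi_x(B(0,1))$ of diameter $<\rho$, (ii) upgrade the open inclusion in the thin-trapping definition to a uniform positive margin by compactness of $M$ and continuity of $x\mapsto(\widetilde{\WWW}_{f,x},\varphi_x)$, and (iii) propagate to a $C^1$-neighborhood of $f$ using continuity of $g\mapsto(\widetilde{\WWW}_g, g)$ and the compactness argument once more -- are exactly the right ingredients, and your resulting family $\WWW_{g,x}:=\widetilde{\WWW}_{g,x}\circ\varphi_x(B(0,1))$ visibly satisfies the tangency, diameter, containment, and (once the margin argument is run) trapping requirements.

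The one place that needs genuine care, which you correctly flag, is that $\varphi_x$ is a priori a diffeomorphism of $E_{f,x}$ while $\widetilde{\WWW}_{g,x}$ has domain $E_{g,x}$. This must be resolved before the composition $\widetilde{\WWW}_{g,x}\circ\varphi_x$ makes sense: either present the plaque families as embeddings of a fixed $\RR^{\dim E}$ as in \thmref{thm:loc-palque} and take $\varphi_x$ to be a diffeomorphism of $\RR^{\dim E}$, or identify $E_{g,x}$ with $E_{f,x}$ via the (near-identity) orthogonal projection for $g$ close to $f$. With that identification in place and the margin $\eta$ taken uniformly over the compact $M$, the persistence argument goes through and yields exactly the statement; so the proposal is correct and essentially the same route as the cited source.
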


\subsection{Minimal foliation}
For the foliation $\FFF^{\sigma}, (\sigma=s,u)$, we introduce the notation of minimal foliation as following.
\begin{definition}
    Let $f:M\rightarrow M$ be a partially hyperbolic diffeomorphism. The foliation $\FFF^{\sigma}$ is minimal if $\FFF^{\sigma}(x)$ is dense in $M$ for all $x\in M$, $(\sigma=s,u)$.
\end{definition}

\begin{definition}
    Let $f:M\rightarrow M$ be a partially hyperbolic diffeomorphism. The stable foliation $\FFF_f^{s}$ of $f$ is $\eps$-minimal if there exists $R>0$ such that if $D$ is a disk contained in a stable leaf of $\FFF_f^{s}$ with a internal radius large than $D$ is $\eps$-dense in $M$.
\end{definition}

It is well known that if $f : M \rightarrow M$ is a partially hyperbolic diffeomorphism whose stable foliation is minimal, then, for every $\eps> 0$, there exists a $C^1$ neighborhood $\UUU$ of $f$ such that for any $g \in \UUU$, the stable foliation is $\eps$-minimal \cite[Lemma 8.1]{climenhaga2018unique}.

\subsection{Lyapunov exponents}
Let $f:M\rightarrow M$ be a diffeomorphism on a compact Riemannian manifold $M$. For a point $x\in M$ and nonzero vector $v\in T_xM$, the Lyapunov exponent of $f$ at $x$ in direction $v$ is defined as
\begin{equation*}
    \lambda(f,x,v)=\lim_{n\rightarrow \infty} \frac{1}{n} \log\| Df^n(x) \cdot v \|,
\end{equation*}
if the limit of right-hand side exists.

Let $\mu\in\MMM_f(M)$, the Oseledets theorem guarantee the existence of Lyapunov exponents for $\mu$-almost $x\in M$.
\begin{theorem}{\rm (\cite{oseledets1968multiplicative})}
    Let $f:M\rightarrow M$ be a $C^1$ diffeomorphism on a compact Riemannian manifold $M$ and $\mu$ a ergodic $f$-invariant measure. Then for $\mu$-almost $x\in M$, there exist
    \begin{itemize}
        \item real numbers $\lambda_1(f,\mu)>\dots>\lambda_k(f,\mu)$ $(k\leq n)$,
        \item a splitting
        \begin{equation*}
            T_xM=E_{1}(x)\oplus \cdots \oplus E_{k}(x),
        \end{equation*}
    \end{itemize}
    such that $Df_xE_i(x)=E_i(f(x))$ and the limit
    \begin{equation*}
        \lambda_i(f,x)\coloneqq \lim_{n\rightarrow \infty} \frac1n \log\|Df^n(x)\cdot v\|=\lim_{n\rightarrow -\infty} \frac1n \log\|Df^n(x)\cdot v\|=\lambda_i,
    \end{equation*}
    exist for all $v\in E_i(x)\setminus \{0\}$ and $\lambda_i(f,x)=\lambda_i(f,\mu)$.
\end{theorem}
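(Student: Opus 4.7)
The plan is to build on Kingman's subadditive ergodic theorem, which is the workhorse behind Oseledets. First I would apply Kingman to the subadditive cocycle $n \mapsto \log\|Df^n(x)\|$, which gives, for $\mu$-a.e.\ $x$, existence of
\[
\lambda_1(f, x) = \lim_{n\to\infty} \tfrac{1}{n}\log \|Df^n(x)\|,
\]
and, by ergodicity of $\mu$, this limit is $\mu$-a.e.\ equal to a constant $\lambda_1$. To capture more than the top exponent, I would repeat the argument on the induced cocycles $\bigwedge^p Df^n$ on exterior powers of the tangent bundle, obtaining limits $\Lambda_p := \lim \tfrac{1}{n}\log\bigl\|\bigwedge^p Df^n(x)\bigr\|$ for $p = 1, \dots, \dim M$. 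Forming successive differences produces the candidate spectrum $\lambda_1 \geq \lambda_2 \geq \cdots \geq \lambda_{\dim M}$, from which the distinct values $\lambda_1 > \cdots > \lambda_k$ are read off.

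Next, to build the filtration, I would analyze the symmetric positive-definite operators $\Lambda_n^+(x) := \bigl((Df^n(x))^{*} Df^n(x)\bigr)^{1/(2n)}$. The key step, due to Oseledets, is to show that $\Lambda_n^+(x)$ converges as $n\to\infty$ to a positive-definite operator $\Lambda^+(x)$ whose eigenvalues are exactly the $e^{\lambda_i}$. The eigenspaces of $\Lambda^+(x)$ then assemble into a forward filtration $V_1(x) \supset V_2(x) \supset \cdots \supset V_{k+1}(x) = \{0\}$ with $V_i(x) = \{v : \lambda(f,x,v) \leq \lambda_i\}$. To get the direct-sum splitting (rather than merely a filtration), I would apply the same construction to $f^{-1}$ to obtain a complementary backward filtration; intersecting matched forward and backward Oseledets subspaces yields the $E_i(x)$, and the $Df$-invariance $Df_x E_i(x) = E_i(f(x))$ follows immediately from the equivariance of both filtrations under $Df$. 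The two-sided limit for $v \in E_i(x) \setminus \{0\}$ then drops out of this intersection characterization.

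The hardest step to execute honestly is the convergence of $\Lambda_n^+(x)$ and of its eigenprojections: a naive application of Kingman yields convergence of each singular value but not of the associated eigenspaces. To close this gap one must invest genuine work in angle estimates between the singular-value subspaces at time $n$ and at time $n+1$, either through Raghunathan's normalizing procedure (following Ruelle's proof) or by exhibiting an exponential contraction on the relevant Grassmannian bundles powered by the spectral gaps $\lambda_i - \lambda_{i+1} > 0$. Once those contraction estimates are in place, the remainder of the argument is essentially bookkeeping, combining the Kingman-type limits with the measurable dependence of the eigenprojections on $x$ to produce the asserted equalities on a full-measure set.
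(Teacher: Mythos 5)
The paper does not prove this statement; it simply cites Oseledets' theorem from \cite{oseledets1968multiplicative} as background material, so there is no in-paper argument to compare against. Your sketch is a correct roadmap for the standard Raghunathan--Ruelle proof of the multiplicative ergodic theorem: Kingman on $\log\|Df^n\|$ and its exterior powers produces the spectrum $\{\lambda_i\}$ with a.e.\ constancy by ergodicity; convergence of $\bigl((Df^n)^{*}Df^n\bigr)^{1/2n}$ gives the forward filtration; applying the same to $f^{-1}$ gives the backward filtration; and intersecting the two (using $Df$-equivariance and a dimension count) yields the splitting $E_1\oplus\cdots\oplus E_k$ with two-sided limits on each $E_i$. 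You correctly identify the genuine difficulty — convergence of the eigenprojections, not just the singular values — and name two viable routes (Raghunathan's normalization, or exponential contraction on Grassmannians powered by the gaps $\lambda_i - \lambda_{i+1}$). Two small points worth tightening if this were written out: (i) the individual eigenspaces of the limit operator $\Lambda^+(x)$ are not themselves $Df$-equivariant, only the nested sums forming the filtration are, so the passage from ``eigenspaces'' to ``equivariant filtration'' deserves a sentence; and (ii) the claim that matched forward/backward subspaces intersect transversally and sum to $T_xM$ is the place where the dimension bookkeeping actually lives and is not entirely automatic. Neither is a gap in the plan — both are standard — but they are the spots a referee would poke.
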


\begin{definition}
    An ergodic measure $\mu$ is hyperbolic if it has no zero Lyapunov exponents, that is $|\lambda_i(f,\mu)|>0$ for every $i$.
\end{definition}

\subsection{Unstable entropy}
In this section, we recall the notation of unstable metric entropy and unstable topological entropy for partially hyperbolic diffeomorphisms introduced in \cite{hu2017unstable}.

Let $f$ be a partially hyperbolic diffeomorphism. For a partition $\alpha$ of $M$, let $\alpha(x)$ denote the element of $\alpha$ that contains $x$. A partition $\alpha$ is finer than a partition $\beta$, which we denote by $\alpha \geq \beta  \mbox{ or } \beta \leq \alpha$, if every element $\alpha(x)$ is contained in $\beta(x)$ for all $x\in M$.

\begin{definition}
    We say a partition $\PPP$ is measurable (or countably generated) with respect to measure $\mu$ if there exists a measurable family $\{ A_i\}_{i\in\NN}$ and a measurable set $F$ of full measure such that if $B\in \PPP$, then there exists a sequence $\{ B_i\}$, where $B_i\in \{A_i,A_i^c\}$ such that $B\cap F=\bigcap_{i}B_i\cap F$.
\end{definition}

A measurable partition $\xi$ is called increasing if $f^{-1}\xi \geq \xi$. For a measurable partition $\beta$, we denote $\beta_m^{n}=\vee_{i=m}^{n}f^{-i}\beta$.

Take $\eps_0$ small. Let $\PPP=\PPP_{\eps_0}$ denote the set of finite measurable partitions whose elements have a diameter smaller than or equal to $\eps_0$. For each $\beta \in \PPP$ we can define a finer partition $\eta$ such that $\eta(x)=\beta(x)\cap W_{loc}^{u}(x)$ for each $x\in M$, where $W_{loc}^{u}(x)$ represents a local unstable manifold at $x$ with size greater than the diameter $\eps_0$ of $\beta$. Let $\PPP^{u}=\PPP^{u}_{\eps_o}$ denote the set of such partitions.

A measurable partition $\alpha$ is said to be subordinate to unstable manifolds of $f$ with respect to a measure $\mu$ if for $\mu$-almost every point $x$, $\alpha(x)\subset W^u(x)$ and contains an open neighborhood of $x$ in $W^u(x)$. It is not difficult to check if measurable partition $\alpha$ with $\mu(\partial\alpha)=0$ where $\partial \alpha\coloneqq\bigcup_{A\in \alpha}\partial A$, then the corresponding partition $\eta$ in $\PPP^u$ given by $\eta(x)=\alpha(x)\cap W_{loc}^u(x)$ is a partition subordinated to unstable manifolds of $f$.

For measurable $\eta$ partition of probability measure space $(M,\mu)$, the canonical system of condition measures for $\mu$ is a family of probability measure $\{\mu_x^{\eta}: x\in M \}$ a disintegration of $\mu$ such that $\mu^{\eta}_x(\eta(x))=1 $ and for every measurable set $B\subset M$, the map $x\mapsto \mu_{x}^{\eta}(B)$ is measurable and $\mu(B)=\int_M \mu_{x}^{\eta}(B)d\mu(x)$. (See e.g. \cite{rokhlin1952fundamental} for reference.)

Given two measurable partition $\alpha, \eta$ of measure space $(M,\mu)$, the conditional entropy of $\alpha$ given $\eta$ with respect to $\mu$ is defined as
\begin{equation*}
    H_{\mu}(\alpha|\eta)=-\int_M \log \mu_x^{\eta}(\alpha(x))d\mu(x).
\end{equation*}

\begin{definition}
    The conditional entropy of $f$ with respect to a measurable partition $\alpha$ given $\eta$ is defined as
    \begin{equation*}
        h_\mu(f, \alpha|\eta)=\limsup_{n\rightarrow \infty}{\frac{1}{n}H_{\mu}(\alpha^{n-1}_{0}|\eta)}.
    \end{equation*}
    The conditional entropy of $f$ given $\eta \in \PPP^u $ is defined as
    \begin{equation*}
        h_\mu(f|\eta)=\sup_{\alpha \in \PPP}h_\mu(f,\alpha|\eta),
    \end{equation*}
    and the unstable metric entropy of $f$ is defined as
    \begin{equation*}
        h_\mu^u(f)=\sum_{\eta\in\PPP^u}h_\mu(f|\eta).
    \end{equation*}
\end{definition}

Now we introduce the notion of unstable topological entropy. Let $d^u$ denote the metric induced by the Riemannian structure on the unstable manifold and $d^u_n(x,y)=\max_{0\leq i\leq n-1}\{d^u(f^i(x), f^i(y))\}$. A subset is called $(n,\eps)$-separated if all elements are pairwise $\eps$ $d^u_n$-distance away. Consider open ball $B^u(x,\delta)$ inside the unstable manifold $W^u(x)$, let $N^u(f,\eps,n,x,\delta)$ denote the maximal cardinality of $(n,\eps)$-separated subsets of $\overline{B^u(x,\delta)}$.

\begin{definition}
    The unstable topological entropy of $f$ on M is defined as
    $$h^u_{\top}(f)=\lim_{\delta\rightarrow 0}\sum_{x\in M} h^u_{\top}(f,\overline{B^u(x,\delta)})$$
    wehre
    $$h^u_{\top}(f,\overline{B^u(x,\delta)})=\lim_{\eps \rightarrow 0}\limsup_{n\rightarrow \infty}\frac{1}{n}N^u(f,\eps,n,x,\delta).$$
\end{definition}

Just like the usual metric entropy and topological entropy, the unstable metric entropy and unstable topological entropy can be related through a variational principle. Indeed, the following holds \cite[Theorem D]{hu2017unstable}.
\begin{equation*}
    h^u_{\top}(f)=\sup\{h^u_\mu(f):\, \mu \in \MMM_f(M)\}=\sup\{h^u_\nu(f):\, \nu \in \MMM_f^e(M)\}.
\end{equation*}

The stable metric entropy and the stable topological entropy can be defined simply as $h^s_\mu(f)=h^u_\mu(f^{-1})$ and $h^s_{\top}(f)=h^u_{\top}(f^{-1})$. The variational principle also relates them.

\section{Proof of \thmref{thm:main}}\label{sec:main-proof}
Let $f: M\rightarrow M$ be a $C^{1+}$ partially hyperbolic diffeomorphism on a compact $M$ with a dominated splitting
    \begin{equation*}
        TM=E^s\oplus E_1^c \oplus E_2^c \oplus E^u,
    \end{equation*}
where $\dim E_i^c = 1$ for $i=1, 2$, and $\Diff^{1+}(M)$ be the collection of $C^{1+}$ diffeomorphisms on $M$. Suppose that $\dim E^s$ and $\dim E^u$ are greater than $0$, and $\phi$ is a H\"older continuous potential. Assume that $0<\max\{ \| Df|_{E^s} \|,\| Df^{-1}|_{E^u} \| \}< \xi<1$.

Now we consider the inequality
\begin{equation}\label{eq:ent-ineq}
    h_{\top}(f)- \max\{h^u(f), h^s(f)\}> \sup\phi - \inf\phi\geq 0.
\end{equation}
In fact, this assumption plays a crucial role in proving that the integrated central Lyapunov exponents $\lambda_i^c(f,\mu)$ $(i=1,2)$  is away from zero. In the following section, we will prove that the inequality \eqref{eq:ent-ineq} is open in $C^{1+}$ topology. Throughout this section, we assume the inequality \eqref{eq:ent-ineq} always holds.

\begin{remark}
    Let us comment on this hypothesis. A related condition, $P_{\top}(f,\phi)>\sup \phi$, was introduced by Denker, Urba\'nski \cite{denker1991ergodic}, in the context of rational maps on the sphere. Another related condition, $\sup\phi-\inf \phi < h(f)-\log q$, is used by Varandas, Viana \cite{varandas2010existence}, in the context of non-uniformly expanding maps. Our condition seems to play a similar role in our setting. 
\end{remark}

\subsection{Open property of \eqref{eq:ent-ineq}}
For given $f\in \Diff^{1+}(M)$ and $\mu$ ergodic measure,  Ledrappier and Young entropy formula says that
\begin{equation}\label{eq:u-ent-formula}
    h_{\mu}(f)\leq h^{u}_{\mu}(f)+ \sum_{\lambda^c(f,\mu)>0} \lambda^c(f,\mu),
\end{equation}
By replacing $f$ with $f^{-1}$, we also obtain
\begin{equation*}
    h_{\mu}(f)\leq h^{s}_{\mu}(f)- \sum_{\lambda^c(f,\mu)<0} \lambda^c(f,\mu).
\end{equation*}
As a consequence of this formula, if the central Lyapunov exponents $\lambda^{c}(f,\mu)$ are non-positive (respectively non-negative) then
\begin{equation}\label{eq:us-ent}
    h_{\mu}(f)=h_{\mu}^u(f) \, (\mbox{respectively } h_{\mu}(f)=h_{\mu}^s(f)).
\end{equation}
\begin{remark}
    The above formula \eqref{eq:u-ent-formula} of Ledrappier and Young was proved in $C^2$ regularity \cite{ledrappier1985metric1, ledrappier1985metric2}. And Brown improved the formula to $C^{1+\alpha}$ regularity \cite{brown2022smoothness}.
\end{remark}

\begin{lemma}\label{lem:hyp-meas}
    Let $\mu$ be an ergodic equilibrium state for $(f,\phi)$. If \eqref{eq:ent-ineq} holds, then $\lambda_1^c(f,\mu)< 0$ and $\lambda_2^c(f,\mu)> 0$.
\end{lemma}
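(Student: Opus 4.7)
The plan is to combine the equilibrium state equality $h_\mu(f) + \int \phi\,d\mu = P(f,\phi)$ with the Ledrappier--Young-type inequalities \eqref{eq:u-ent-formula} and its backward version, then derive a contradiction if one of the two central exponents has the wrong sign. The key observation is that the hypothesis \eqref{eq:ent-ineq} forces $h_\mu(f)$ to strictly exceed both $h^u(f)$ and $h^s(f)$.

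First I would establish the lower bound
\begin{equation*}
h_\mu(f) > \max\{h^u_\mu(f),\, h^s_\mu(f)\}.
\end{equation*}
By the variational principle for topological entropy, pick a sequence $\nu_n \in \MMM_f^e(M)$ with $h_{\nu_n}(f) \to h_\top(f)$. Then $P(f,\phi) \geq h_{\nu_n}(f) + \int \phi\,d\nu_n \geq h_{\nu_n}(f) + \inf \phi$, so $P(f,\phi) \geq h_\top(f) + \inf\phi$. Since $\mu$ is an equilibrium state,
\begin{equation*}
h_\mu(f) = P(f,\phi) - \int \phi\,d\mu \;\geq\; h_\top(f) + \inf\phi - \sup\phi \;=\; h_\top(f) - (\sup\phi - \inf\phi).
\end{equation*}
Combining with \eqref{eq:ent-ineq} gives $h_\mu(f) > \max\{h^u(f), h^s(f)\}$, and the unstable/stable variational principles then bound $h^u_\mu(f) \leq h^u(f)$ and $h^s_\mu(f) \leq h^s(f)$, which yields the desired strict inequality.

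Next, I would argue by contradiction for each sign separately, using the fact that the domination $E_1^c \prec E_2^c$ forces $\lambda_1^c(f,\mu) < \lambda_2^c(f,\mu)$ strictly at Oseledets-regular points (the domination bound $\|Df^n|_{E_1^c}\|\,\|Df^{-n}|_{E_2^c}\| \leq C\lambda^n$ gives a uniform gap of at least $-\log\lambda > 0$ between the two Lyapunov exponents). Suppose $\lambda_2^c(f,\mu) \leq 0$; then $\lambda_1^c(f,\mu) < 0$ as well, so all central exponents of $\mu$ are non-positive. By \eqref{eq:us-ent}, $h_\mu(f) = h^u_\mu(f)$, contradicting the strict inequality above. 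Symmetrically, if $\lambda_1^c(f,\mu) \geq 0$, then $\lambda_2^c(f,\mu) > 0$ and all central exponents are non-negative, so $h_\mu(f) = h^s_\mu(f)$, again a contradiction. Hence $\lambda_1^c(f,\mu) < 0 < \lambda_2^c(f,\mu)$.

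The routine parts here are the pressure bound $P(f,\phi) \geq h_\top(f) + \inf\phi$ and the application of \eqref{eq:u-ent-formula}. The only subtle point worth double-checking is the strict separation of the two central Oseledets exponents from the domination hypothesis, since the conclusion $\lambda_1^c < 0$ and $\lambda_2^c > 0$ requires that we cannot have both exponents equal to zero; the domination quantitatively excludes this. No further structural ingredients (minimality of $\FFF^s$, thin-trappedness, etc.) are needed at this step — those will enter only in the later arguments establishing specification and the Bowen property.
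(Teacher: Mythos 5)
Your proof is correct and follows essentially the same strategy as the paper: combine the hypothesis \eqref{eq:ent-ineq} with the Ledrappier--Young equalities \eqref{eq:us-ent} and the variational principle to derive a contradiction if the central exponents were both non-positive or both non-negative. You are in fact slightly more careful than the paper in explicitly invoking the domination $E_1^c \prec E_2^c$ to produce a strict gap $\lambda_1^c(f,\mu) < \lambda_2^c(f,\mu)$ and thereby exclude the residual configuration $\lambda_1^c > 0 > \lambda_2^c$, which the paper's case analysis leaves implicit.
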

\begin{proof}
    Let $\mu$ be an ergodic equilibrium state for $(f, \phi)$, assuming $\lambda_1^c(f,\mu)$ and $\lambda_2^c(f,\mu)$ are both non-negative. By \eqref{eq:us-ent}, we have
    \begin{equation*}
        P_{\mu}(f,\phi)=h_{\mu}(f)+\int \phi\,d\mu=h_{\mu}^s(f) + \int \phi\,d\mu\leq h^s(f)+\sup\phi.
    \end{equation*}
    Observe that \eqref{eq:ent-ineq} implies that there is an invariant measure $\nu$ such that
    \begin{equation*}
        h_{\nu}(f)+ \inf\phi > h^s(f)+ \sup\phi,
    \end{equation*}
    hence
    \begin{equation*}
        P_{\mu}(f,\phi)< h_{\nu}+ \inf\phi \leq P(f,\phi),
    \end{equation*}
    which contradicts that $\mu$ is an ergodic equilibrium state. Therefore, $\lambda_1^c(f,\mu)$ and $\lambda_2^c(f,\mu)$ can't be both non-negative. Similarly, we can prove that $\lambda_1^c(f,\mu)$ and $\lambda_2^c(f,\mu)$ can't be both non-positive which yields $\lambda_1^c(f,\phi)<0$ and $\lambda_2^c(f,\phi)>0$.
\end{proof}

\begin{lemma}\label{lem:posi-ent}
    Let $\mu$ be an ergodic equilibrium state for $(f,\phi)$, then $h_{\mu}(f)>0$.
\end{lemma}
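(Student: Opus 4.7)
The plan is to argue by contradiction, using only the bound \eqref{eq:ent-ineq} and the variational principle. Suppose $h_\mu(f)=0$. Then
\begin{equation*}
P_\mu(f,\phi) = h_\mu(f) + \int \phi\, d\mu = \int \phi\, d\mu \leq \sup \phi.
\end{equation*}

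On the other hand, the hypothesis \eqref{eq:ent-ineq} yields
\begin{equation*}
h_{\top}(f) > \max\{h^u(f), h^s(f)\} + \sup\phi - \inf\phi \geq \sup\phi - \inf\phi,
\end{equation*}
since the unstable and stable topological entropies are non-negative. Equivalently, $h_{\top}(f) + \inf\phi > \sup\phi$. By the classical variational principle, I can pick an invariant measure $\nu$ with $h_\nu(f)$ close enough to $h_{\top}(f)$ that
\begin{equation*}
h_\nu(f) + \int \phi\, d\nu \geq h_\nu(f) + \inf\phi > \sup\phi \geq P_\mu(f,\phi).
\end{equation*}
This forces $P_\mu(f,\phi) < P(f,\phi)$, contradicting the assumption that $\mu$ is an equilibrium state. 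Hence $h_\mu(f) > 0$.

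The argument is essentially a one-line use of the free-energy gap supplied by \eqref{eq:ent-ineq}; the only mildly subtle point, and the step I want to be careful with, is making sure I only need $h^u, h^s \geq 0$ (so that the hypothesis gives $h_{\top}(f) + \inf\phi > \sup\phi$ directly) rather than needing a measure of maximal entropy to exist. Since the variational principle supplies measures with metric entropy arbitrarily close to $h_{\top}(f)$, no regularity of the entropy map is required here, which is cleaner than invoking entropy expansiveness. No use of the central Lyapunov information from \lemref{lem:hyp-meas} is needed for this particular lemma; it will be used downstream.
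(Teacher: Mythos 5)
Your argument is correct and is essentially the paper's: both use the free-energy gap $h_{\top}(f)+\inf\phi>\sup\phi$ coming from \eqref{eq:ent-ineq} to conclude $P(f,\phi)>\sup\phi\geq P_\mu(f,\phi)$ whenever $h_\mu(f)=0$. The one genuine (minor) improvement is that the paper takes $\nu$ to be an actual measure of maximal entropy, implicitly relying on entropy-expansiveness to guarantee its existence, whereas you only pick $\nu$ with $h_\nu(f)$ sufficiently close to $h_{\top}(f)$, which the variational principle supplies for free; this makes the lemma self-contained and independent of the $h$-expansivity discussion elsewhere in the paper. Note also that the contradiction framing is slightly indirect: the paper's inequality $h_\mu(f)>\sup\phi-\int\phi\,d\mu\geq 0$ gives the conclusion in one line without needing to separately invoke $h_\mu(f)\geq 0$, though of course metric entropy is always non-negative so your version is fine.
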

\begin{proof}
    Let $\nu$ be a measure of maximal entropy, then we have
    \begin{equation*}
        P_{\nu}(f,\phi)=h_{\nu}(f)+\int\phi\,d\nu\geq h_{\top}(f)+ \inf\phi> h^u(f)+ \sup\phi\geq \sup\phi.
    \end{equation*}
    Also we have $h_{\mu}(f)+\int\phi\,d\mu=P(f,\phi)\geq P_{\nu}(f,\phi)>\sup\phi$. This implies $h_{\mu}(f)>\sup\phi-\int\phi\,d\mu\geq 0$.
\end{proof}

\lemref{lem:posi-ent} guarantees that every ergodic equilibrium state has positive metric entropy. And \lemref{lem:hyp-meas} shows that all such measures are hyperbolic measure which allows us to apply Katok's horseshoe \cite[Theorem S.5.9]{katok1995introduction} to approximate the free energy $P_{\mu}(f,\phi)$ arbitrarily close with topological pressure $P(f |_{\Lambda},\phi |_{\Lambda})$ restricted on a hyperbolic horseshoe $\Lambda$.

\begin{proposition}\label{prop:cont-press}
    The map $g\rightarrow P(g,\phi)$ with $g\in \Diff^{1+}(M)$ is continuous in $C^1$ topology.
\end{proposition}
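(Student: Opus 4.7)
The plan is to prove continuity by separately establishing lower and upper semicontinuity of $g \mapsto P(g, \phi)$ at $f$. For lower semicontinuity, the strategy is Katok's horseshoe approximation: by \lemref{lem:hyp-meas} and \lemref{lem:posi-ent}, every ergodic equilibrium state of $(f, \phi)$ is hyperbolic with positive metric entropy, so Katok's theorem \cite[Theorem~S.5.9]{katok1995introduction} yields, for each $\eta > 0$, a hyperbolic basic set (horseshoe) $\Lambda_f \subset M$ with $P(f|_{\Lambda_f}, \phi) > P(f, \phi) - \eta$. Such a basic set is $C^1$-robust, so there exist a neighborhood $\UUU_\Lambda$ of $f$ and a continuously varying family of hyperbolic continuations $\Lambda_g$ together with H\"older conjugacies $h_g : \Lambda_f \to \Lambda_g$ intertwining $f|_{\Lambda_f}$ and $g|_{\Lambda_g}$ and tending to the identity as $g \to f$. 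Continuity of $\phi$ and of $h_g$ then forces $P(g|_{\Lambda_g}, \phi) \to P(f|_{\Lambda_f}, \phi)$; combining with $P(g, \phi) \geq P(g|_{\Lambda_g}, \phi)$ and letting $\eta \to 0$ gives $\liminf_{g \to f} P(g, \phi) \geq P(f, \phi)$.

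For upper semicontinuity, the approach relies on robust entropy expansivity. The dominated splitting $TM = E^s \oplus E_1^c \oplus E_2^c \oplus E^u$ with one-dimensional center subbundles persists on a $C^1$ neighborhood $\UUU$ of $f$, and by the results cited in the preliminaries (\cite{Lorenzo2012entropy, liao2013entropy, arbieto2025existence}) the tail entropy vanishes with a \emph{uniform} entropy-expansivity scale $\eps_0 > 0$ throughout $\UUU$. Uniform vanishing of the tail entropy is then expected to yield the joint upper semicontinuity of the metric entropy functional: if $g_k \to f$ in $C^1$ and $\mu_k \in \MMM_{g_k}(M)$ with $\mu_k \to \mu$ weakly, then $\mu \in \MMM_f(M)$ and $\limsup_k h_{\mu_k}(g_k) \leq h_\mu(f)$. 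Picking $\mu_k$ to be an equilibrium state for $g_k$ (available since $g_k$ is entropy expansive) and passing to a weakly convergent subsequence, we combine this inequality with $\int \phi \, d\mu_k \to \int \phi \, d\mu$ to conclude
\[
\limsup_{g \to f} P(g, \phi) \;\leq\; h_\mu(f) + \int \phi \, d\mu \;\leq\; P(f, \phi).
\]

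The main obstacle is the upper semicontinuity step. Entropy expansivity of each individual $g$ only yields the upper semicontinuity of $\mu \mapsto h_\mu(g)$ for fixed $g$; what is actually needed here is the joint version in which both $g$ and $\mu$ vary, and this requires \emph{uniform} control of the entropy-expansivity scale across the $C^1$ neighborhood $\UUU$. That uniformity is precisely what the one-dimensional decomposition of $E^c$ together with the cited robust-entropy-expansivity results provides. With a uniform $\eps_0$ in hand, the argument reduces to comparing the partition functions at the fixed scale $\eps_0$ for $f$ and for nearby $g$ along a fixed partition $\alpha$ of diameter smaller than $\eps_0$ with $\mu(\partial\alpha)=0$; the finite-time perturbations $g^{-j}\alpha \to f^{-j}\alpha$ produce only error terms that wash out in the $n \to \infty$ limit, closing the upper bound and completing the proof.
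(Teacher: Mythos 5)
Your proposal follows essentially the same two-pronged route the paper takes: lower semicontinuity via Katok horseshoe approximation (the paper cites \cite{katok1995introduction,gelfert2016horseshoes} and \cite[Lemma 3.4]{mongez2025robustness}) and upper semicontinuity via robust entropy expansivity and joint upper semicontinuity of the entropy map (the paper cites \cite[Theorem 4.3]{mongez2025partially}). The only minor quibble is that to control the \emph{pressure} (not just the entropy) of the horseshoe you need the Birkhoff averages of $\phi$ along the horseshoe to approximate $\int\phi\,d\mu$, which goes slightly beyond the classical statement of \cite[Theorem S.5.9]{katok1995introduction} and is exactly what Gelfert's refinement \cite{gelfert2016horseshoes} supplies; otherwise the argument matches the paper's.
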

\begin{proof}
    The upper semi-continuity is given by \cite[Theorem 4.3]{mongez2025partially}. The lower semi-continuity is from katok's horseshoe \cite{katok1995introduction} or \cite{gelfert2016horseshoes}. Also, the proof can be found in \cite[Lemma 3.4]{mongez2025robustness}.
\end{proof}

\begin{corollary}\label{cor:open-ent-ineq}
    There exists a $C^1$ neighborhood $\VVV$ of $f$, such that for every $g\in \VVV\cap \Diff^{1+}(M)$, we have
    \begin{equation}\label{eq:open-ent-ineq}
        h_{\top}(g)- \max\{h^u(g), h^s(g)\}> \sup\phi - \inf\phi\geq 0.
    \end{equation}
\end{corollary}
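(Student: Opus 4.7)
The plan is to view the strict gap in \eqref{eq:ent-ineq} as a buffer and to preserve it under small $C^1$ perturbations of $f$ within $\Diff^{1+}(M)$ by combining continuity of $h_\top(\cdot)$ at $f$ with upper semicontinuity of $h^u(\cdot)$ and $h^s(\cdot)$ at $f$. More precisely, set
\begin{equation*}
    \delta \coloneqq h_{\top}(f) - \max\{h^u(f), h^s(f)\} - (\sup\phi - \inf\phi) > 0,
\end{equation*}
and seek a $C^1$ neighborhood $\VVV$ of $f$ on which simultaneously $h_{\top}(g) > h_{\top}(f) - \delta/3$, $h^u(g) < h^u(f) + \delta/3$, and $h^s(g) < h^s(f) + \delta/3$. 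Summing the three estimates immediately recovers \eqref{eq:open-ent-ineq} with a slack of $\delta/3$.

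The first bound is essentially free: applying \propref{prop:cont-press} with the constant potential $\phi \equiv 0$ yields continuity of $g \mapsto h_{\top}(g)$ on $\Diff^{1+}(M)$ in the $C^1$ topology, furnishing a neighborhood $\VVV_1$ of $f$ on which $h_\top(g) > h_\top(f) - \delta/3$.

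The substantive step, and the main obstacle, is upper semicontinuity of the unstable and stable topological entropies under $C^1$ perturbations. Two ingredients are needed: (a) stability of the dominated splitting, so that $E^s_g, E^u_g$ together with the foliations $\FFF^s_g, \FFF^u_g$ depend continuously on $g$ in $C^1$ topology by \cite{hirsch1977invariant}, which in turn makes unstable plaques of a fixed radius vary continuously with $g$; and (b) a uniform-in-$g$ control on the exponential growth rate of $(n,\eps)$-separated sets inside these plaques, combined with the variational identity $h^u(g) = \sup_\mu h^u_\mu(g)$ of \cite{hu2017unstable}. The resulting upper semicontinuity of $g \mapsto h^u(g)$, and, by applying the same argument to $f^{-1}$, of $g \mapsto h^s(g)$, is available in \cite{mongez2025partially, mongez2025robustness}, and provides a neighborhood $\VVV_2$ on which both $h^u(g) < h^u(f) + \delta/3$ and $h^s(g) < h^s(f) + \delta/3$. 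Taking $\VVV = \VVV_1 \cap \VVV_2$ then produces the required open set.
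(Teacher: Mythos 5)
Your proposal is correct and follows essentially the same route as the paper: reduce continuity of $g \mapsto h_{\top}(g)$ to Proposition~\ref{prop:cont-press} with the zero potential, invoke upper semicontinuity of $h^u$ and $h^s$, and shrink both neighborhoods so that the strict gap in \eqref{eq:ent-ineq} is preserved (the only cosmetic difference being the choice of $\delta/3$ in place of the paper's single $\eps < \tfrac12(h_{\top}(f)+\inf\phi-\max\{h^u(f),h^s(f)\}-\sup\phi)$).
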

\begin{proof}
    We conclude that the map $g\rightarrow h_{\top}(g)$ is continuous in $C^1$ topology. Indeed, let us consider the zero potential $\phi=0$, We have $P(g,\phi)=h_{\top}(g)$ and condition \eqref{eq:ent-ineq} $h_{\top}(f)- \max\{h^u(f), h^s(f)\}> 0$ is satisfied. Hence the map $g\rightarrow P(g,0)$ is continuous by \propref{prop:cont-press}.

    Given $\eps >0$, because of continuity of map $g\rightarrow h_{\top}(g)$, there is a $C^1$ neighborhood $\VVV_1$ of $f$ such that
    \begin{equation*}
        | h_{\top}(f)-h_{\top}(g) |< \eps,
    \end{equation*}
    for every $g\in \VVV_1 \cap \Diff^{1+}(M)$. And we also have upper semi-continuity of maps $h^u,h^s: \Diff^{1+}(M)\rightarrow \RR$, so that there is a $C^1$ neighborhood $\VVV_2$ of $f$ such that
    \begin{equation*}
       \max\{ h^u(g), h^s(g) \}<\max\{ h^u(f),h^s(f) \}+\eps,
    \end{equation*}
    for every $g\in \VVV_2\cap \Diff^{1+}(M)$. Let $0<\eps <\frac12 (h_{\top}(f)+\inf\phi-\max\{h^u(f),h^s(f)\}-\sup\phi)$, and $\VVV=\VVV_1\cap\VVV_2$, then we have \eqref{eq:open-ent-ineq} for every $g\in \VVV\cap \Diff^{1+}(M)$.
\end{proof}

\begin{remark}
    Without loss of generality, we can assume $\|Dg|_{E^s}\|, \| Dg^{-1}|_{E^u}\|< \xi<1$ for every $g\in \VVV$.
\end{remark}

\subsection{Construction of the $(\PPP,\GGG,\SSS)$ decomposition}
In this section, we give a decomposition $(\PPP_g,\GGG_g,\SSS_g)$ for every orbit segment $(x,n)\in M\times \NN$ in such a way that $\GGG_g$ captures almost all ``hyperbolicity'' . We follow some ideas from \cite{climenhaga2021beyond,mongez2025robustness,burns2018unique}. In their work, the `good' collection of orbit segment $\GGG_g$ starts at `hyperbolic' time such that $\GGG_g$ obtain almost hyperbolicity. However, they deal with the setting of one-dimensional center, so that it can be decomposed into hyperbolic time and non-hyperbolic time in the center direction, that is $(\PPP_g,\GGG_g)$. In our setting of two-dimensional center, we decompose every orbit segment $(x,n)$ into three part:
\begin{itemize}
    \item $\PPP_g$ : non-hyperbolicity in $E_1^c$ direction;
    \item $\GGG_g$ : hyperbolicity in $E_1^c$ and $E_2^c$ directions;
    \item $\SSS_g$ : non-hyperbolicity in $E_2^c$ direction.
\end{itemize}

Let $\VVV$ be a $C^1$ neighborhood of $f$ as in \corref{cor:open-ent-ineq}. For every $g\in \VVV\cap \Diff^{1+}(M)$, let us denote the central Lyapunov exponents by $\lambda_1^c(g,\mu)$ and $\lambda_2^c(g,\mu)$. Now consider the quantities
\begin{align*}
    P_1^+(g,\phi)&=\sup\{P_{\mu}(g,\phi):\, \mu\in \MMM_f^e(M), \lambda_1^c(g,\mu)\geq 0\},\\
    P_1^-(g,\phi)&=\sup\{P_{\mu}(g,\phi):\, \mu\in \MMM_f^e(M), \lambda_1^c(g,\mu)\leq 0\},\\
\end{align*}
and
\begin{align*}
    P_2^+(g,\phi)&=\sup\{P_{\mu}(g,\phi):\, \mu\in \MMM_f^e(M), \lambda_2^c(g,\mu)\geq 0\},\\
    P_2^-(g,\phi)&=\sup\{P_{\mu}(g,\phi):\, \mu\in \MMM_f^e(M), \lambda_2^c(g,\mu)\leq 0\}.
\end{align*}
\lemref{lem:hyp-meas} and \corref{cor:open-ent-ineq} tell us that under our assumption there is no ergodic equilibrium state such that $\lambda_1^c(g,\mu)\geq 0$ and $\lambda_2^c(g,\mu)\leq 0$. Since $P(g,\phi)=\max\{P_1^+(g,\phi),P_1^-(g,\phi)\}= \max\{P_2^+(g,\phi),P_2^-(g,\phi)\}$, with upper semi-continuity of pressure, it implies that
\begin{equation*}
    P_1^+(g,\phi)< P(g,\phi) \enspace \mathrm{and}\enspace P_2^-(g,\phi)< P(g,\phi).
\end{equation*}
By ergodic decomposition theorem, we get
\begin{equation*}
    \sup\{P_{\mu}(g,\phi):\, \mu\in \MMM_f(M), \lambda_1^c(g,\mu)\geq 0\}< P(g,\phi),
\end{equation*}
and
\begin{equation*}
    \sup\{P_{\mu}(g,\phi):\, \mu\in \MMM_f(M), \lambda_2^c(g,\mu)\leq 0\}< P(g,\phi),
\end{equation*}
for every $g\in \VVV\cap\Diff^{1+}(M)$.

\begin{lemma}\label{lem:less-press}
    There exist $\eta >0$, $a>0$, and a $C^1$ neighborhood $\widetilde{\VVV} \subset \VVV$ such that for every $g\in \widetilde{\VVV} \cap\Diff^{1+}(M)$, we have
    \begin{equation*}
        \sup\{P_{\mu}(g,\phi):\, \mu\in \MMM_g(M), \lambda_1^c(g,\mu)\geq -\eta \}<a< P(g,\phi),
    \end{equation*}
    and
    \begin{equation}\label{eq:E_2-less-press}
        \sup\{P_{\mu}(g,\phi):\, \mu\in \MMM_g(M), \lambda_2^c(g,\mu)\leq \eta \}<a< P(g,\phi).
    \end{equation}
\end{lemma}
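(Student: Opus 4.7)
The plan is to argue by contradiction, exploiting the strict gap $\max\{P_1^+(f,\phi),\,P_2^-(f,\phi)\}<P(f,\phi)$ established just above the lemma. I treat the $\lambda_1^c$ inequality in detail; the $\lambda_2^c$ inequality is obtained by the symmetric argument (using \lemref{lem:hyp-meas}'s $\lambda_2^c(f,\omega)>0$ in place of $\lambda_1^c(f,\omega)<0$). The key intermediate claim is
\[
\limsup_{g\to f,\ \eta\to 0^+}\ \sup\bigl\{P_\mu(g,\phi):\,\mu\in\MMM_g(M),\ \lambda_1^c(g,\mu)\geq -\eta\bigr\}\ <\ P(f,\phi),
\]
together with its analogue for $\{\lambda_2^c(g,\mu)\leq\eta\}$. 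Granted both, I pick $a$ strictly above both lim sups and strictly below $P(f,\phi)$; continuity of $g\mapsto P(g,\phi)$ (\propref{prop:cont-press}) provides a $C^1$ neighborhood of $f$ on which $a<P(g,\phi)$, while the claim supplies $\eta>0$ and a (possibly smaller) neighborhood $\widetilde\VVV\subset\VVV$ on which both sups lie below $a$.

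To prove the claim, suppose it fails: take sequences $g_n\to f$ in $C^1$, $\eta_n\downarrow 0$, and $\mu_n\in\MMM_{g_n}(M)$ with $\lambda_1^c(g_n,\mu_n)\geq -\eta_n$ and $P_{\mu_n}(g_n,\phi)\to P(f,\phi)$. Extract a weak*-convergent subsequence $\mu_n\to\mu\in\MMM_f(M)$. Joint upper semicontinuity of the free energy $(g,\mu)\mapsto h_\mu(g)+\int\phi\,d\mu$ on a $C^1$ neighborhood of $f$, which follows from uniform entropy-expansivity at a common scale for partially hyperbolic diffeomorphisms whose center splits into 1D $Dg$-invariant subbundles (cited in the existence subsection), gives $P_\mu(f,\phi)\geq P(f,\phi)$, so $\mu$ is an equilibrium state of $(f,\phi)$. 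Joint continuity of the integrated central Lyapunov exponent $(g,\mu)\mapsto \int \log\|Dg|_{E^c_{1,g}(x)}\|\,d\mu(x)$, which holds because the dominated subbundle $E^c_{1,g}$ depends continuously on $g$ in $C^0$, then yields $\int \log\|Df|_{E^c_{1,f}}\|\,d\mu\geq 0$.

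Using the ergodic decomposition $\mu=\int\omega\,d\tau(\omega)$, affineness of the free energy together with $P_\omega(f,\phi)\leq P(f,\phi)$ for every ergodic $\omega$ forces $P_\omega(f,\phi)=P(f,\phi)$ for $\tau$-a.e.\ $\omega$, so $\tau$-almost every ergodic component of $\mu$ is itself an ergodic equilibrium state of $(f,\phi)$. By \lemref{lem:hyp-meas}, each such component satisfies $\lambda_1^c(f,\omega)<0$ strictly, and therefore
\[
\int \log\|Df|_{E^c_{1,f}}\|\,d\mu\ =\ \int \lambda_1^c(f,\omega)\,d\tau(\omega)\ <\ 0,
\]
contradicting the non-negative bound above. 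This completes the proof of the claim and hence of the lemma.

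The main technical obstacle is the joint upper semicontinuity of the free energy; once uniform entropy-expansivity at a common scale on a $C^1$ neighborhood is in hand (robust because the cone fields controlling the dominated splitting vary continuously in $C^1$ and bound the expansivity scale they furnish), the remaining steps are a clean pairing of the ergodic decomposition with \lemref{lem:hyp-meas} and the strict hyperbolicity it supplies for ergodic equilibrium states.
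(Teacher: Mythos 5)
Your proposal is correct, and it follows the standard argument that the paper itself only gestures at by citing Lemma~4.1 and Corollary~4.2 of \cite{mongez2025robustness}: argue by contradiction, extract a weak$^*$ limit $\mu$ of the $\mu_n$'s, use (joint) upper semicontinuity of the free energy coming from uniform entropy-expansivity at a fixed scale on a $C^1$ neighborhood to force $\mu$ to be an equilibrium state of $(f,\phi)$, use continuity of $(g,\mu)\mapsto\int\log\|Dg|_{E^c_{1,g}}\|\,d\mu$ (thanks to $C^0$ continuity of the dominated subbundle in $g$) to pass the sign condition to the limit, and then contradict \lemref{lem:hyp-meas} through ergodic decomposition and affineness of pressure. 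Two small points worth tightening: when negating the statement you should record that the hypothesis gives $P_{\mu_n}(g_n,\phi)\geq P(g_n,\phi)-1/n$ (say), and then invoke \propref{prop:cont-press} to conclude $P_{\mu_n}(g_n,\phi)\to P(f,\phi)$, rather than taking that convergence as a direct consequence; and one should note explicitly that the weak$^*$ limit $\mu$ is $f$-invariant because $g_n\to f$ uniformly (a routine verification). Neither is a gap in substance.
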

\begin{proof}
    The proof is similar to Lemma 4.1 and Corollary 4.2 in \cite{mongez2025robustness}.
\end{proof}

\begin{remark}\label{rmk:inverse-lya-exponent}
    In following discussion, we rewrite the condition in \eqref{eq:E_2-less-press}. It is clear that $\lambda_2^c(g,\mu)\leq \eta$ is equivalent to $\lambda_2^c(g^{-1},\mu)\geq -\eta$.
\end{remark}

Finally, we can construct a decomposition for every $g$ which is close to $f$ in the following way. Fix $r>0$, $a>0$ and $\widetilde{\VVV}\subset\Diff^{1}(M)$ as in \lemref{lem:less-press}. For every $g\in \widetilde{\VVV}\cap \Diff^{1+}(M)$, let $\varphi_1^c(x)=\log \|Dg |_{E_1^c(x)}\|$ and $\widehat{\varphi}_2^c(x)=\log \|Dg^{-1} |_{E_2^c(x)}\|$, for given $\mu\in \MMM_g(M)=\MMM_{g^{-1}}(M)$ we have the central Lyapunov exponents
\begin{equation*}
    \begin{split}
        \lambda_1^c(g,x)&=\lim_{n\rightarrow \infty} \frac{1}{n} S_n \varphi_1^c(x),\\
        \lambda_2^c(g^{-1},x)&=\lim_{n\rightarrow \infty} \frac{1}{n} S_n^{-1} \widehat{\varphi}_2^c(x),
    \end{split}
\end{equation*}
for $\mu$-almost everywhere, where $S_n\varphi_1^c(x)=\sum_{k=0}^{n-1}\varphi_1^c(g^kx)$ and $S_n^{-1}\widehat{\varphi}_2^c(x)=\sum_{k=0}^{n-1}\widehat{\varphi}_2^c(g^{-k}x)$. Moreover, if $\mu$ is ergodic then $\lambda_1^c(g,\mu)=\int \varphi_1^c (x)\, d\mu$ and $\lambda_2^c(g^{-1},\mu)=\int \widehat{\varphi}_2^c (x)\, d\mu$.

For $r>0$, we define
\begin{equation}\label{eq:P_g}
    \PPP_g(r)\coloneqq \{ (x,n)\in M\times \NN:\, S_n\varphi_1^c(x)\geq -rn\}.
\end{equation}
Let us describe the `good' collection of orbit segment $\GGG_g$ and `bad' collection of orbit segment $\SSS_g$. To do this, let us consider an arbitrary orbit segment $(x,n)\in M\times\NN$. We remove the longest possible element of $\PPP_g$ from its beginning. That is, let $p = p(x,n)$ be maximal with the property that $(x,p) \in \PPP_g$. Then we have
\begin{equation*}
    S_p\varphi_1^c(x)\geq -rp, \mbox{ and } S_k\varphi_1^c(x)< -rk, \mbox{ for all } p<k\leq n.
\end{equation*}
It gives
\begin{equation*}
    S_{k-p}\varphi_1^c(g^px)=S_k\varphi_1^c(x)-S_p\varphi_1^c(x)<-r(k-p),
\end{equation*}
which we can rewrite as
\begin{equation}\label{eq:P_g-eq}
    S_j\varphi_1^c(x)<-rj, \mbox{ for all } 0\leq j\leq n-p,
\end{equation}

To define $\SSS_g(r)$, we notice that $(M,g)$ and $(M,g^{-1})$ have same orbit segment space (ignoring the direction of the orbits). We write orbit segment $(x,n)$ backwards so that it become the orbit segment of $g^{-1}$,
\begin{equation*}
    g^{n-1}x, \, \dots, \, gx, \, x.
\end{equation*}
Similarly, we let $s=s(x,n)$ be maximal with properties that $S_s^{-1}\widehat{\varphi}_2^c(g^{n-1}x)\geq -rs$ and $s\leq n-p$.

\begin{equation*}
    S_s^{-1} \widehat{\varphi}_2^c(g^{n-1}x)\geq -rs, \mbox{ and } S_k^{-1}\widehat{\varphi}_2^c(g^{n-1}x)< -rk, \mbox{ for all } s<k\leq n-p,
\end{equation*}
then we have
\begin{equation}\label{eq:S_g-trans}
    S_{k-s}^{-1}\widehat{\varphi}_2^c(g^{n-s-1}x)=S_k^{-1}\widehat{\varphi}_2^c(g^{n-1}x)-S_s^{-1} \widehat{\varphi}_2^c(g^{n-1}x)< -r(k-s).
\end{equation}
Rewrite \eqref{eq:S_g-trans} as
\begin{equation}\label{eq:S_g-eq}
    S_{j}^{-1}\widehat{\varphi}_2^c(g^{j-1}x)<-rj,  \mbox{ for all } 0\leq j\leq n-p-s.
\end{equation}
In summary, combining \eqref{eq:P_g}, \eqref{eq:P_g-eq} and \eqref{eq:S_g-eq}, we have a decomposition for $(x,n)$. They are characterized by
\begin{equation}\label{eq:PGS}
    \begin{aligned}
        \PPP_g(r) & = \{ (x,n)\in M\times \NN:\, S_n\varphi_1^c(x)\geq -rn\},\\
        \GGG_g(r) & = \{ (x,n)\in M\times \NN:\, S_k\varphi_1^c(x)< -rk \mbox{ and }  S_{k}^{-1}\widehat{\varphi}_2^c(g^{k-1}x)<-rk, \, \forall \, 0\leq k\leq n\},\\
        \SSS_g(r) & = \{ (x,n)\in M\times \NN:\, S_n^{-1} \widehat{\varphi}_2^c(g^{n-1}x)\geq -rn\} \setminus \PPP_g.
    \end{aligned}
\end{equation}
where $n$ is the maximal integer satisfying \eqref{eq:P_g-eq} and \eqref{eq:S_g-eq}.

\subsection{Specification on $\GGG_g(r)$}
In this section, we will prove the Specification on $\GGG_g$. We first give a theorem which is close to \cite[Theorem 3.5]{ANDERSSON20201245} and \cite[Theorem 4.3]{mongez2025robustness}.

Consider the diffeomorphism $f:M\rightarrow M$ with dominated splitting $TM=E^s\oplus E_1^c\oplus E_2^c\oplus E^u$. We denote $E^{cs}=E^s\oplus E_1^c$ and $E^{cu}=E_2^c\oplus E^u$. For simplicity we write $\WWW_g^{cs}(x)= \WWW_{g,x}^{cs}(\RR^{\dim E^{cs}})$ and $\WWW_g^{cu}(x)= \WWW_{g,x}^{cu}(\RR^{\dim E^{cu}})$, where $\WWW_{g,x}^{cs}$ and $\WWW_{g,x}^{cu}$ as in \thmref{thm:loc-palque}.
\begin{theorem}\label{thm:loc-disks}
    There exist a $C^1$ neighborhood $\widetilde{\UUU}$ of $f$, a constants $\rho=\rho(f)>0$, $C=C(f)>0$ and $\eps^{\prime}>0$ such that for every $g\in \widetilde{\UUU}$, we have a continuous collection of trapped plaque families $\widetilde{\WWW}_{g}^{cs}$ tangent to $E_g^{cs}$ with diameter smaller than $\rho$.

    Moreover, if $(x,n)\in \GGG_g(r)$, then the following statements holds:
    \begin{itemize}
        \item[(1)] There are two $C^1$ embedded disks $W_{\rho}^{cs}(g,x)$ and $W_{\rho}^{cu}(g,x)$ which dimensions equal to $\dim E^{cs}$ and $\dim E^{cu}$ respectively, and radius $\rho>0$, centered at $x$, such that
        \begin{equation*}
            T_x W_{\rho}^{cs}(g,x)=E^{cs}(g,x) \mbox{ and } T_x W_{\rho}^{cu}(g,x)=E^{cu}(g,x);
        \end{equation*}
        \item[(2)] $W_{\rho}^{cs}(g,x)$ and $W_{\rho}^{cu}(g,x)$ depend continuously on both $x$ and $g$ in the $C^1$ topology;
        \item[(3)] If $y\in W_{\rho}^{cs}(g,x)$ and $z\in W_{\rho}^{cu}(g,x)$, then
        \begin{equation*}
            d(g^kx,g^ky)\leq C e^{-kr/2} d(x,y) \mbox{ and } d(g^{n-k}x,g^{n-k}z)\leq C e^{-kr/2} d(g^n x,g^n z),
        \end{equation*}
        for every $0\leq k \leq n$;
        \item[(4)] $\widetilde{\WWW}_{g}^{cs}(x)\subset W_{\rho}^{cs}(g,x)$ for every $g\in \widetilde{\UUU}$;
        \item[(5)] If $d(x,y)< \eps^{\prime}$, then
        \begin{equation}\label{eq:LPS}
            \widetilde{\WWW}_{g}^{cs}(y) \cap W_{\rho}^{cu}(g,x) \neq \varnothing,
        \end{equation}
        where $y$ is not necessary in some $(\GGG_g(r))_n$. Furthermore, if $z\in \widetilde{\WWW}_{g}^{cs}(y) \cap W_{\rho}^{cu}(g,x)$, then $d(x,z) \leq C d(x,y)$.
    \end{itemize}
\end{theorem}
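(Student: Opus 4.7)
The plan is to combine the existence of a uniformly trapped plaque family (guaranteed by the thin trapped hypothesis on $E^{s}\oplus E_{1}^{c}$) with the quantitative exponential estimates enforced by membership in $\GGG_g(r)$, in order to promote generic center-(un)stable plaques into genuine hyperbolic local disks of a uniform radius $\rho$. To initiate the construction, I would apply \propref{prop:robust thin-trapped} with a prescribed small diameter to the thin trapped bundle $E^{cs}=E^s\oplus E_1^c$: this yields a $C^1$ neighborhood $\widetilde{\UUU}$ of $f$ and a continuous collection $(\widetilde{\WWW}_g^{cs})_{g\in \widetilde{\UUU}}$ of plaque families tangent to $E^{cs}_g$, trapped by $g$, of diameter less than $\rho$. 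On the $E^{cu}$ side, where no thin trapping assumption is available, I would use only \thmref{thm:loc-palque} to obtain a continuous locally invariant plaque family $\WWW_g^{cu}$ tangent to $E^{cu}_g$; this will be cut down to the uniform-size disk $W_\rho^{cu}(g,x)$ via the backward hyperbolicity on $\GGG_g(r)$.

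The second step is to extract from the definition \eqref{eq:PGS} of $\GGG_g(r)$ genuine exponential contractions. For $(x,n)\in \GGG_g(r)$ and $0\le k\le n$, the inequality $S_k\varphi_1^c(x)<-rk$ gives $\prod_{i=0}^{k-1}\|Dg|_{E_1^c(g^i x)}\|<e^{-rk}$. Combined with $\|Dg|_{E^s}\|<\xi<1$ and the domination $E^s\prec E_1^c$, a Lyapunov-type adapted metric produces a uniform bound $\|Dg^k|_{E^{cs}(x)}\|\le Ce^{-kr/2}$ with $C$ depending only on $f$ and $\xi$. The mirror computation for $g^{-1}$ on the reversed segment, using $S_k^{-1}\widehat{\varphi}_2^c(g^{k-1}x)<-rk$, yields the symmetric bound $\|Dg^{-k}|_{E^{cu}(g^{n}x)}\|\le Ce^{-kr/2}$. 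With these infinitesimal contractions in hand, a standard Hadamard--Perron or graph-transform argument in a tubular neighborhood of the segment produces $W_\rho^{cs}(g,x)$ as a $C^1$ graph over $E^{cs}(g,x)$ of radius $\rho$ with uniformly bounded derivative, yielding items (1) and (3); the analogous construction applied to $g^{-1}$ gives $W_\rho^{cu}(g,x)$. Continuity in $(g,x)$ in the $C^1$ topology (item (2)) follows from the continuous dependence of the graph-transform fixed point on its data, together with the continuity built into \thmref{thm:loc-palque} and \propref{prop:robust thin-trapped}. For item (4), since $W_\rho^{cs}(g,x)$ is the essentially unique Lipschitz graph over $E^{cs}(g,x)$ of radius $\rho$ whose forward iterates contract exponentially, and $\widetilde{\WWW}_g^{cs}(x)$ is a trapped plaque tangent to the same bundle of diameter at most $\rho$, the plaque is forced inside this disk.

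For the local product structure in item (5), I would invoke uniform transversality: $W_\rho^{cu}(g,x)$ is a $C^1$ disk of radius $\rho$ tangent to $E^{cu}(g,x)$ and transverse to $E^{cs}$ with an angle bounded below uniformly in $(g,x)$ by the domination, while $\widetilde{\WWW}_g^{cs}(y)$ is tangent to $E^{cs}(g,y)$, which is close to $E^{cs}(g,x)$ when $d(x,y)$ is small and $g\in \widetilde{\UUU}$. The implicit function theorem, applied with the uniform angle, diameter and $C^1$ bounds, then gives a unique intersection point $z\in \widetilde{\WWW}_g^{cs}(y)\cap W_\rho^{cu}(g,x)$ for $d(x,y)<\eps'$, and the Lipschitz constant of this intersection yields $d(x,z)\le C d(x,y)$.

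The main obstacle I anticipate lies in step two: passing from the scalar exponential estimate $S_k\varphi_1^c(x)<-rk$, which a priori only controls the one-dimensional $E_1^c$ direction, to a uniform norm estimate on the whole two-dimensional bundle $E^{cs}$ along the segment $(x,n)\in \GGG_g(r)$. This requires a careful choice of Lyapunov-adapted metric in which the domination $E^s\prec E_1^c$ can be combined with the pointwise-averaged contraction in $E_1^c$; the subsequent nonlinear passage from infinitesimal contraction to a uniform-size local manifold depends crucially on the trapping provided by \propref{prop:robust thin-trapped}, so that the graph-transform iterates remain within a bounded chart. This is the technical heart of the argument and parallels \cite[Theorem 3.5]{ANDERSSON20201245} and \cite[Theorem 4.3]{mongez2025robustness}, which I would adapt to the present two-dimensional-center setting.
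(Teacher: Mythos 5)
Your proposal follows the same overall strategy as the paper: build the trapped plaque family from \propref{prop:robust thin-trapped}, obtain a locally invariant plaque family tangent to $E^{cu}$ from \thmref{thm:loc-palque}, and then use the sub-exponential Birkhoff sums encoded in $\GGG_g(r)$ to upgrade the plaques into disks with uniform exponential contraction along the orbit segment. The final two items are handled identically to the paper, by diameter control on the trapped plaque and by uniform transversality.

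Where your route departs from the paper is the construction of $W_{\rho}^{cs}(g,x)$ and $W_{\rho}^{cu}(g,x)$. You propose to rebuild these via an adapted Lyapunov metric and a Hadamard--Perron / graph-transform argument in a tubular neighborhood of the segment. The paper avoids this: it keeps the plaques $\WWW_g^{cs}(x)$, $\WWW_g^{cu}(x)$ supplied by \thmref{thm:loc-palque} as fixed ambient objects, takes a small intrinsic disk $D_g(x,c)$ inside each plaque, and proves the exponential contraction of item~(3) by a direct induction on $k$. The key mechanism there is uniform continuity of $\log\|Dg^{-1}|_{E_g^{cu}(\cdot)}\|$ (which transfers the pointwise bound $\|Dg^{-1}|_{E_g^{cu}(x)}\|\le e^{-r}$ to nearby points at cost $e^{r/2}$), together with local invariance of the plaques to ensure the iterates stay inside the chart. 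This sidesteps the need to choose an adapted metric and re-run a graph-transform fixed-point argument, and it makes the $r/2$ loss in item~(3) transparent. Your approach would also work, but it re-proves what the plaque family theorem already provides, and it leaves implicit the step that the $\GGG_g(r)$ condition on a one-dimensional subbundle $E_1^c$ (resp.\ $E_2^c$) together with the uniform rate $\xi$ on $E^s$ (resp.\ $E^u$) upgrades to a norm estimate on the full bundle $E^{cs}$ (resp.\ $E^{cu}$); the paper handles this the same way, so neither version is more careful here.

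One genuine soft spot in your argument is item~(4). You assert that $\widetilde{\WWW}_g^{cs}(x)\subset W_\rho^{cs}(g,x)$ because the latter is ``the essentially unique Lipschitz graph over $E^{cs}(g,x)$ of radius $\rho$ whose forward iterates contract exponentially'' and $\widetilde{\WWW}_g^{cs}(x)$ is a trapped plaque tangent to the same bundle. But trapping is a weaker property than exponential contraction: a trapped plaque need not contract at any definite rate, so the uniqueness theorem you are invoking does not directly apply to it. The paper's route is both shorter and gap-free: \propref{prop:robust thin-trapped} explicitly produces $\widetilde{\WWW}_g^{cs}(x)$ as a subplaque of $\WWW_g^{cs}(x)$, and $W_\rho^{cs}(g,x)$ is by construction a disk of radius $\rho$ inside the same plaque, so inclusion follows just by shrinking the diameter of the trapped family below $\rho$. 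You should replace the uniqueness appeal with this direct containment; alternatively, you would first have to prove that the trapped plaque also contracts exponentially along orbits in $\GGG_g(r)$ before uniqueness can be used.
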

\begin{proof}
    The existence of trapped plaque families $\widetilde{\WWW}_g^{cs}$ can be obtained directly from \propref{prop:robust thin-trapped}.

    Now suppose that $(x,n)\in \GGG_g(r)$, let us consider the dominated splitting $E^{cu}\prec E^{cs}$ of inverse of $f$, by \thmref{thm:loc-palque} there is a $C^1$ neighborhood $\widetilde{\UUU}$ of $f$, a number $\rho^{\prime} >0$, and a continuous family of embeddings $\WWW_{g,x}^{cu} \in \Emb^1(\RR^{\dim E^{cu}},M)$ such that for every $g\in \widetilde{\UUU}$ and every $x\in M$ we have
    \begin{itemize}
        \item $T_x \WWW_{g,x}^{cu}(\RR^{\dim E^{cu}})=E^{cu}_{g}(x)$
        \item $g^{-1}(\WWW_{g,x}^{cu} (B_{\rho^{\prime}}(0)))\subset \WWW_{g,g^{-1}(x)}^{cu}(\RR^{\dim E^{cu}})$.
    \end{itemize}

    Let $\WWW_g^{cu}(x)=\WWW_{g,x}^{cu}(\RR^{\dim E^{cu}})$ and  let $D_g(x,c)=\{y\in \WWW_g^{cu}(x): \dist_{\WWW_g^{cu}(x)}(x,y) <c\}$ be a disk of radius $c$ where $\dist_{\WWW_g^{cu}(x)}$ denote the distance given by the Riemannian structure in $\WWW_g^{cu}(x)$.

    Take $c_0$ small enough such that  $D_g(x,c_0)\subset \WWW_{g,x}^{cu}(B_{\rho}(0))$ for every $(g,x) \in \widetilde{\UUU} \times M$, then we have $g^{-1}(D_g(x,c_0))\subset \WWW_g^{cu}(g^{-1}(x))$. In particular, for any $c_1 \le c_0$, there exists $c_2 \le c_1$ such that $g^{-1}(D_g(x, c_2))\subset D_g(g^{-1}(x),c_1)$ for every $(g,x) \in \widetilde{\UUU} \times M$.

    By uniform continuity of $\log \|Df^{-1}|_{E_g^{cu}(x)}\|$, upon possibly reducing $\widetilde{\UUU}$, we may assume that there exists $c_1 < c_0$ such that $\|Dg^{-1}|_{E_g^{cu}(y)}\| \leq e^{r/2} \|Dg^{-1}|_{E_g^{cu}(x)}\|$ wherever $g\in \widetilde{\UUU}$ and $d(x,y)<c_1$.

    We claim that for every $y\in D_g(x,c_1)$ and every $(x,n)\in \GGG_g(r)$ we have
    \begin{equation*}
        \prod_{i=0}^{k} \|Dg^{-1}|_{E_g^{cu}(g^{n-i}(y))}\| \leq e^{-kr/2}
    \end{equation*}
    for every $0\le k \le n$. This can be obtained through induction and the definition of $\GGG_g(r)$. When $k=0$ it follows directly from $\| Dg^{-1}|_{E_g^{cu}(y)}\| \leq e^{r/2} \|Dg^{-1}|_{E_g^{cu}(x)}\| \leq  e^{r/2} e^{-r} \leq  e^{-r/2}  $. Assume that it holds for $i<k$. $\dist_{\WWW_g^{cu}}(g^{n-i}(x), g^{n-i}(y))\leq e^{-ir/2} \dist_{\WWW_g^{cu}}(x,y) \leq c_1$ for every $i < k$,  hence $g^{n-k+1}(y) \in D_g(g^{n-k+1}(x),c_1)$ which implies
    \begin{equation*}
        \begin{split}
            \dist_{\WWW_g^{cu}}(g^{n-k}(x), g^{n-k}(y)) &= \dist_{\WWW_g^{cu}}(g^{-1}(g^{n-k+1}(x), g^{-1}(g^{n-k+1}(y))))\\
            &\leq e^{-r/2} \dist_{\WWW_g^{cu}}(g^{n-k+1}(x), g^{n-k+1}(y)) \leq c_1.
        \end{split}
    \end{equation*}
    Therefore we have $\prod_{i=0}^{k} \|Dg^{-1}|_{E_g^{cu}(g^{n-i}(y))}\| \leq e^{kr/2}\prod_{i=0}^{k} \|Dg^{-1}|_{E_g^{cu}(g^{n-i}(x))}\| \leq e^{-kr/2}$ as desired.

    Let $W_{\rho}^{cu}(g,x)=D_g(x,\rho)$ for sufficiently small $\rho < c_2 $, we shall also assume that $\rho$ is small enough so that each $D_g(x,\rho)$ is uniformly transversal to $E^{cs}$ for every $(g,x)$,  then both (1), (2) are true for $W_{\rho}^{cu}(g,x)$. Item (3) is given by our induction process.

    Similarly we can consider dominated splitting $E^{cs}\prec E^{cu}$ to obtain $W_{\rho}^{cs}(g,x)$ for every $g\in \widetilde{\UUU}$ and every $x\in M$ satisfying item (1)(2)(3).

    Item (4) holds since $\widetilde{\WWW}_g^{cs}(x)\subset \WWW_g^{cs}(x)$ and we can reduce the diameter of $\widetilde{\WWW}_g^{cs}(x)$ if necessary. Item (5) follows from uniform transversality and continuity of $W_{\rho}^{cu}(g,x)$ and $W_{\rho}^{cs}(g,y)$.
\end{proof}

\begin{remark}\label{rmk:Wcu}
    Applying \thmref{thm:loc-disks} to $g^{-1}$, if $(x,n)\in \GGG_g(r)$, then there exists a $C^1$ embedded disk $W_{\rho}^{cu}(g,g^n x)$ tangent to $E_g(g^n x)$ such that
    \begin{itemize}
        \item $W_{\rho}^{cu}(g,g^n x)$ is continuous;
        \item If $g^n z\in W_{\rho}^{cu}(g,g^n x)$, then
        \begin{equation*}
            d(g^{n-k}x,g^{n-k}z)\leq C e^{-kr/2} d(g^n x,g^n z), \, \mbox{ for every } \, 0\leq k\leq n.
        \end{equation*}
        \item If $d(g^n x, y)<\eps^{\prime}$, then
        \begin{equation*}
             \widetilde{\WWW}_{g}^{cs}(y) \cap W_{\rho}^{cu}(g,g^n x) \neq \varnothing,
        \end{equation*}
        with $d(g^n x, z)\leq C d(g^n x, y)$ where $z\in \widetilde{\WWW}_{g}^{cs}(y) \cap W_{\rho}^{cu}(g,g^n x)$.
    \end{itemize}
\end{remark}

\begin{remark}\label{rmk:LPS}
    If $y\in \GGG_g(r)$ in item (5) of \thmref{thm:loc-disks} , then we can rewrite \eqref{eq:LPS} as following formula
    \begin{equation*}
        W_{\rho}^{cs}(g,x) \cap W_{\rho}^{cu}(g,y) \neq \varnothing.
    \end{equation*}
    Furthermore, if $z\in W_{\rho}^{cs}(g,x) \cap W_{\rho}^{cu}(g,y)$, then $d(x,z) \leq C d(x,y)$.
\end{remark}

Let $(x,n)\in \GGG_g(r)$ and $y$ be a contained in a Bowen ball of $x$. We have no idea about information which like item (3) of $y$. To overcome this obstruction, we can use uniform continuity of $\varphi_1^c$ and $\widehat{\varphi}_2^c$. Given $r>0$, let $\widehat{\eps} =\widehat{\eps}(r) >0$ be small enough so that for any $x,y \in M$, we have
\begin{equation}\label{eq:uni-con}
    d(x,y) < \widehat{\eps} \Rightarrow | \varphi_1^c(x)-\varphi_1^c(y) | \leq \frac{r}{2} \mbox{ and } | \widehat{\varphi}_2^c(x)-\widehat{\varphi}_2^c(y) | \leq \frac{r}{2}.
\end{equation}
In particular, if $x\in \GGG_g(r)$ and $y\in B_n(x,\widehat{\eps})$, then
\begin{equation*}
    S_k \varphi_1^c(y) \leq S_k \varphi_1^c(x) + rk/2 < -rk/2 , \mbox{ for every } 0\leq k \leq n.
\end{equation*}
and
\begin{equation*}
    S_k^{-1} \widehat{\varphi}_2^c(y) \leq S_k^{-1} \widehat{\varphi}_2^c(x) + rk/2 < -rk/2 , \mbox{ for every } 0\leq k \leq n.
\end{equation*}
Therefore we have $(y,n) \in \GGG_g(r/2)$.

\begin{proposition}\label{prop:specification}
    Let $f$ has minimal stable foliation and $0<\delta<\widehat{\eps}$, there exists a $C^1$ neighborhood $\UUU$ of $f$ such that if $g\in \UUU\cap \Diff^{1+}(M)$ then g has specification on $\GGG_g(r)$ at scale $\delta>0$.
\end{proposition}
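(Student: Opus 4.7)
The plan is to follow the Climenhaga--Thompson--style specification arguments of \cite{climenhaga2018unique, mongez2025robustness, burns2018unique}, adapted to the two-dimensional center setting by working with the center-stable and center-unstable disks $W_{\rho}^{cs}$ and $W_{\rho}^{cu}$ of \thmref{thm:loc-disks}, whose complementary dimensions provide a local product structure, together with the thin-trapped plaques $\widetilde{\WWW}_g^{cs}$ from \propref{prop:robust thin-trapped}. First I would fix $\eps>0$ small enough that $4C^2\eps<\delta$ (with $C$ from \thmref{thm:loc-disks}) and invoke \cite[Lemma~8.1]{climenhaga2018unique} to obtain a $C^1$ neighborhood $\UUU_1$ of $f$ on which every $g$ has an $\eps$-minimal stable foliation with a uniform constant $R>0$. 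Letting $r_0>0$ be a uniform lower bound for the internal radii of local stable manifolds, I would pick $\tau\in\NN$ large enough that $\xi^{-\tau}r_0\geq R$, and use \propref{prop:robust thin-trapped} to shrink the plaques $\widetilde{\WWW}_g^{cs}$ to prescribed small diameter. The final neighborhood $\UUU$ is the intersection of $\UUU_1$, the neighborhood $\widetilde{\UUU}$ from \thmref{thm:loc-disks}, and the neighborhood $\widetilde{\VVV}$ from \lemref{lem:less-press}.

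The technical core is a connecting lemma: for every $(x,n)\in\GGG_g(r)$ and every $p\in M$, there exist $\tau_0\leq \tau$ and $z\in W_{\rho}^{cu}(g,g^n x)$ with $d(g^n x,z)<C\eps$ such that $g^{\tau_0}(z)\in\widetilde{\WWW}_g^{cs}(q)$ for some $q\in W^s_{\mathrm{loc}}(p)$ with $d(p,q)\leq \xi^{\tau_0}R$. To prove it I would note that the backward iterate $g^{-\tau_0}(W^s_{\mathrm{loc}}(p))=W^s_{\mathrm{loc}}(g^{-\tau_0}(p))$ contains a stable disk of internal radius at least $\xi^{-\tau_0}r_0\geq R$, hence is $\eps$-dense by $\eps$-minimality. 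Pick $\tilde z$ on this disk within $\eps$ of $g^n x$; apply item~(5) of \thmref{thm:loc-disks} to intersect $\widetilde{\WWW}_g^{cs}(\tilde z)$ with $W_{\rho}^{cu}(g,g^n x)$, obtaining $z$ with $d(g^n x,z)\leq C\eps$. The trapping property of $\widetilde{\WWW}_g^{cs}$ then propagates forward to give $g^{\tau_0}(z)\in\widetilde{\WWW}_g^{cs}(g^{\tau_0}(\tilde z))$, and $q:=g^{\tau_0}(\tilde z)\in W^s_{\mathrm{loc}}(p)$ satisfies $d(p,q)\leq \xi^{\tau_0}R$ by stable contraction.

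Given a finite sequence $\{(x_j,n_j)\}_{j=1}^k\subset\GGG_g(r)$, I would build the shadowing point and the gap times by backward induction. Set $u_k:=x_k$. For $j=k-1,\dots,1$, apply the connecting lemma with $(x_j,n_j)$ and target $p=u_{j+1}$ to produce $\tau_j\leq\tau$ and $z_j\in W_{\rho}^{cu}(g,g^{n_j}x_j)$, and define $u_j:=g^{-n_j}(z_j)$. By \rmkref{rmk:Wcu} applied to $z_j$, the bound $d_{n_j}(u_j,x_j)\leq C\cdot d(g^{n_j}x_j,z_j)< C^2\eps<\delta/4$ holds. The candidate shadowing point is $y:=u_1$, with gluing times $\tau_1,\dots,\tau_{k-1}$.

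The main obstacle, and where the careful work lies, is showing that $y$ satisfies specification \emph{exactly}, since $g^{n_j+\tau_j}(u_j)=g^{\tau_j}(z_j)$ is only close to $u_{j+1}$ (lying in $\widetilde{\WWW}_g^{cs}(q_j)$ with $q_j\in W^s_{\mathrm{loc}}(u_{j+1})$ at distance $\leq\xi^{\tau_j}R$) rather than equal, so the actual iterate $g^{N_{j-1}+\tau_{j-1}}(y)$ drifts from $u_j$ as we propagate through the $k$ segments. I plan to control this drift by (i) making the $\widetilde{\WWW}_g^{cs}$-plaque diameter small enough that each transition contributes error much less than $\delta$, (ii) using exponential stable contraction along $W^s_{\mathrm{loc}}$ so that forward iterates of $q_j$ stay exponentially close to those of $u_{j+1}$ within the subsequent segment, and (iii) invoking the uniform continuity estimate \eqref{eq:uni-con} to conclude that the perturbed start points still inherit $\GGG_g(r/2)$-hyperbolicity from $(x_{j+1},n_{j+1})\in\GGG_g(r)$, so that the exponential-contraction estimate of \thmref{thm:loc-disks}(3) applies at each segment. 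Summing the resulting geometric series yields $d_{n_j}(g^{N_{j-1}+\tau_{j-1}}(y),x_j)<\delta$ for every $j$, establishing that $\GGG_g(r)$ has specification at scale $\delta$ with gaps uniformly bounded by $\tau$.
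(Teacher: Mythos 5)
Your setup matches the paper's: fix $\eps>0$ small, shrink to a $C^1$ neighborhood on which the stable foliation is $\eps$-minimal, and use the local-product structure from \thmref{thm:loc-disks} together with the trapped plaques $\widetilde{\WWW}_g^{cs}$ to produce a transition point. Your ``connecting lemma'' is essentially the same one-step construction the paper carries out: it finds, for each pair $(x_j,n_j)$ and a target, a point on $W_\rho^{cu}(g,g^{n_j}x_j)$ whose image after a bounded time $\tau$ lands on a trapped $cs$-plaque near the target, with stable contraction controlling the remaining error.

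Where you part ways with the paper is in how you go from this one-step gluing to full specification over $k$ segments. The paper stops after verifying the two-segment property and delegates the bootstrap to \cite[Lemma 1.2.4.1]{climenhaga2021beyond}, absorbing the factor relating scales into the constant $K$. You instead attempt to build the shadowing point by backward induction and then prove directly, by tracking the drift through all $k$ transitions, that it works. That is a legitimate and more self-contained route --- it is, in effect, reproving the bootstrap lemma --- but it is also precisely where your proposal is incomplete. The step (iii) you flag (that the perturbed start of each segment, which is no longer a point of $\GGG_g(r)$ nor sitting on the relevant $W^{cu}_\rho$ disk, still admits the contraction estimate of \thmref{thm:loc-disks}(3)) is the crux, and you leave it as a plan rather than an argument: you must verify that the propagated point $g^{N_{j-1}+\tau_{j-1}}(y)$ lies in a trapped $cs$-plaque of a point on $W^s_{loc}(u_j)$ at every stage (using \rmkref{rmk:subfoliation} and the trapping), that its error relative to $x_j$ is controlled by a fixed geometric sum independent of $k$, and that all intermediate points stay within the uniform-continuity scale $\widehat{\eps}$ so the $\GGG_g(r/2)$ class is preserved without the hyperbolicity parameter compounding across transitions. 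None of this is false, but it is not done; the paper's appeal to the Climenhaga--Thompson gluing lemma is exactly the device that packages this bookkeeping, and by bypassing it you have taken on the obligation to supply it.
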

\begin{proof}
    Let $\widetilde{\UUU}$ and $\eps^{\prime}$ be as in \thmref{thm:loc-disks}. Fix $0<\eps< \eps^{\prime}$, and $(x_1,n_1), (x_2,n_2)\in \GGG_g$. Since $\FFF_f^s$ is minimal, then there is a $C^1$ neighborhood $\UUU_{\eps}$ of $f$ such that for every $g\in \UUU_{\eps}$ has $\eps$-minimal stable foliation, that is, there exists $R=R(g)$ such that if $D$ is a disk contained in a stable leaf of $g$ with an internal radius larger than $R/2$, then $D\cap B_{\eps}(x)\neq \varnothing$ for any $x\in M$.

    Let $\UUU=\widetilde{\UUU}\cap \UUU_{\eps}$. For the above $R>0$ and $g\in \UUU$, we fix a $N$ such that if $x,y$ are contained in a stable disck with radius less than $R$, then $d(g^{N} x,g^{N}y)\leq  \delta/2$.

    Let $D$ be a disk contained in $W^s(g^{-N} x_2)$ centered at $g^{-N} x_2$ with radius $R/2$. Since $\eps$-minimality of stable foliation, we have $D\cap B_{\eps}(g^{n_1} x_1)\neq \varnothing$. Pick a point $\widehat{y}\in D\cap B_{\eps}(g^{n_1} x_1)$, by \rmkref{rmk:Wcu}, we can choose a point $y\in \widetilde{\WWW}_g^{cs}(\widehat{y}) \cap W_{\rho}^{cu}(g,g^{n_1} x_1) $ with $d(y,\widehat{y})\leq C\eps$ and $d(y,g^{n_1}x_1)\leq C\eps$ by choosing $\eps > 0$ enough small. Since $\widetilde{\WWW}_g^{cs}$ is trapped by $g$, we have $g^{N}y\in \widetilde{\WWW}_g^{cs}(g^{N}\widehat{y})$, see \figref{fig:fig1}.

    \begin{figure}[ht]
        \centering
        \includegraphics[width=1\textwidth]{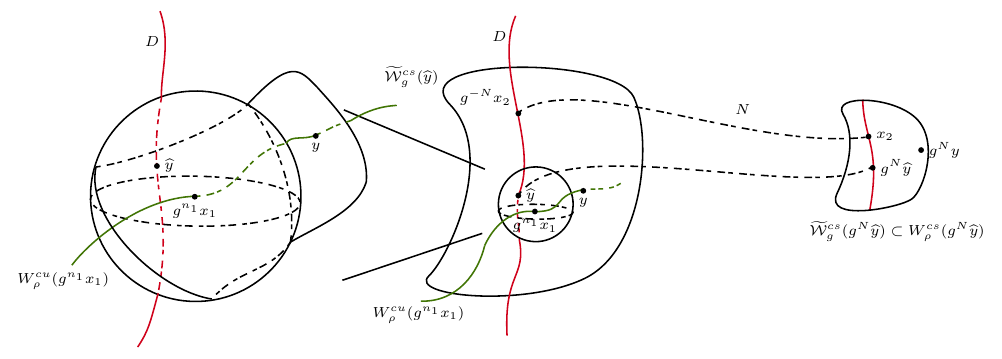}
        \caption{Proving the Specification. We enlarge the detail of ball $B_{\eps}(g^{n_1}x_1)$ on the left-hand side.}
        \label{fig:fig1}
    \end{figure}

    On the one hand, $\widehat{y}\in W^s(g^{-N}x_2)$, we have
    \begin{equation}\label{eq:u-contract}
        d(g^j x_2, g^j(g^{N} \widehat{y}))\leq \frac12 \delta \xi^{j} \mbox{ for every }  0\leq j\leq n_2.
    \end{equation}
    It showed that $g^{N} (\widehat{y})$ stays in Bowen ball $B_{n_2}(x_2,\widehat{\eps})$. Thus $(g^{N} (\widehat{y}), n_2) \in \GGG_g(r/2)$. Also $g^{N} y\in \widetilde{\WWW}_g^{cs}(g^{N}\widehat{y}) \subset W_{\rho}^{cs}(g^{N} \widehat{y})$ by item (4) in \thmref{thm:loc-disks} and, if necessary, reducing the diameter of plaque family $(\widetilde{\WWW}_g^{cs})$ such that $d(g^{N} y, g^{N} \widehat{y})\leq \delta/2$, it implies
    \begin{equation}\label{eq:cu-contract}
        d(g^j (g^{N}\widehat{y}), g^{j}(g^{N} y))\leq \frac12 \delta e^{-j r/4} \mbox{ for every } 0 \leq j \leq n_2.
    \end{equation}
    Combining \eqref{eq:u-contract} with \eqref{eq:cu-contract} yields
    \begin{equation*}
        d(g^j x_2, g^{j}(g^{N} y)) \leq \frac12 \delta (\xi^{j} + e^{-jr/4}) \mbox{ for every } 0 \leq j \leq n_1.
    \end{equation*}

    On the other hand, since $y\in W_{\rho}^{cu}(g,g^{n_1} x_1)$, and  by \rmkref{rmk:Wcu}, it implies
    \begin{equation*}
        d_{n_2}(x_2,g^{-n_1} y)\leq C^2 \eps.
    \end{equation*}

    Therefore, choosing $\tau = N$, the point $g^{-n_1}y$ is what we need. It establishes the ``one-step specification" property, by \cite[lemma 1.2.4.1]{climenhaga2021beyond}, it gives that $\GGG_g(r)$ has specification at scale $\eps K$ where $K$ is a positive constant that can be choosing uniformly with $g$. For given $\delta>0$, we can choose a sufficiently small $\eps$ such that $\eps K \leq \delta$.
\end{proof}

\subsection{The Bowen Property}
We will show that H\"older continuous potentials $\phi$ on $M$ have the Bowen property on $\GGG_g(r)$ for every $g\in \UUU$.

\begin{proposition}\label{prop:bowen}
    Let $\UUU$ and $\eps^{\prime}$ as in \propref{prop:specification} and \thmref{thm:loc-disks} respectively. If $g \in \UUU \cap \Diff^{1+}(M)$ and potential $\phi: M\rightarrow \RR$ is H\"older continuous then $\phi$ has the Bowen property on $\GGG_g(r)$.
\end{proposition}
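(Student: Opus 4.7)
The plan is to derive the Bowen constant from a local product structure coupled with the H\"older regularity of $\phi$. Given $(x,n) \in \GGG_g(r)$ and $y \in B_n(x,\eps)$, the aim is to find an auxiliary point $z$ lying on $W^{cu}_\rho(g,x) \cap W^{cs}_\rho(g,y)$, so that $z$ inherits $cu$-backward contraction from $(x,n)$ and $cs$-forward contraction from $(y,n)$. With $z$ in hand, the triangle inequality controls $d(g^k x, g^k y)$ at every intermediate time $k$ by a sum of two exponentially small quantities, and the H\"older condition converts this into a summable bound on $|\phi(g^k x) - \phi(g^k y)|$.

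First, I will choose the scale $\eps$ smaller than both $\widehat{\eps}(r)$ from~\eqref{eq:uni-con} and the transversality constant $\eps'$ of \thmref{thm:loc-disks}. By the uniform continuity observation made just before \propref{prop:specification}, any $y \in B_n(x,\eps)$ with $(x,n) \in \GGG_g(r)$ automatically satisfies $(y,n) \in \GGG_g(r/2)$, so the hyperbolic estimates of \thmref{thm:loc-disks} apply at $y$ (with $r$ replaced by $r/2$) as well as at $x$. Item~(5) of that theorem then produces
\[
z \in \widetilde{\WWW}_g^{cs}(y) \cap W_{\rho}^{cu}(g,x) \subset W_{\rho}^{cs}(g,y) \cap W_{\rho}^{cu}(g,x),
\]
with $d(x,z),\, d(y,z) \leq C\eps$, and the triangle inequality at time $n$ gives $d(g^n x, g^n z) \leq C'\eps$.

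Invoking item~(3) of \thmref{thm:loc-disks} on the two sides yields, for every $0 \leq k \leq n$,
\[
d(g^k x, g^k z) \leq C\,e^{-(n-k)r/2}\,d(g^n x, g^n z) \quad \text{and} \quad d(g^k z, g^k y) \leq C\,e^{-kr/4}\,d(y,z).
\]
A triangle inequality then gives $d(g^k x, g^k y) \leq C_0\,\eps\,(e^{-(n-k)r/2} + e^{-kr/4})$ with $C_0$ independent of $x,y,n$. H\"older continuity of $\phi$ with exponent $\alpha$ and constant $L$ yields $|\phi(g^k x) - \phi(g^k y)| \leq L (C_0\eps)^\alpha (e^{-\alpha(n-k)r/2} + e^{-\alpha k r/4})$, and summing over $0 \leq k \leq n-1$ uses two convergent geometric series to give a uniform Bowen constant $K$ independent of $n$.

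The main potential obstacle is that \thmref{thm:loc-disks} only guarantees the contraction estimates on orbit segments in $\GGG_g(r)$, whereas $y$ is only known to be in a Bowen neighborhood of $(x,n)$. This is resolved precisely by the inclusion $\GGG_g(r) \cap B_n(x,\widehat{\eps}) \subset \GGG_g(r/2)$ recorded in the text before \propref{prop:specification}: it lets us apply the disk/contraction structure to $y$ as well, paying only a halving of the contraction rate, which is irrelevant to the finiteness of the two geometric series.
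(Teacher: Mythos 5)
Your proposal is correct and follows essentially the same route as the paper's proof: both use the local product structure from Theorem~\ref{thm:loc-disks} to find an auxiliary point lying on a $cs$-plaque through one endpoint and a $cu$-plaque through the other, then combine the forward $cs$-contraction and backward $cu$-contraction via the triangle inequality and sum the resulting geometric series using H\"older regularity. The only (immaterial) cosmetic difference is that you place the auxiliary point in $\widetilde{\WWW}_g^{cs}(y)\cap W_\rho^{cu}(g,x)$, applying item~(5) directly, whereas the paper chooses $[x,y]\in W_\rho^{cs}(g,x)\cap W_\rho^{cu}(g,y)$ via Remark~\ref{rmk:LPS}; the two choices are symmetric and give the same form of summable bound.
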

\begin{proof}
    Let $g\in \UUU \cap \Diff^{1+}(M)$ and $0< \eps <\eps^{\prime},\widehat{\eps}$, such that \eqref{eq:uni-con} holds. For every $(x,n)\in \GGG_g(r)$ and $y\in B_n(x,\eps)$, then $(y,n) \in \GGG_g(r/2)$. By \rmkref{rmk:LPS}, we can choose a point $[x,y]\in W_{\rho}^{cs}(g,x) \cap W_{\rho}^{cu}(g,y)$ such that, by item (3) in \thmref{thm:loc-disks},
    \begin{equation*}
        d(g^k x,g^k [x,y])\leq Ce^{-kr/2}\delta_0 \mbox{ for all } k=0,1,\dots,n-1,
    \end{equation*}
    and
    \begin{equation*}
        d(g^k y,g^k [x,y])\leq Ce^{-(n-k)r/4}\delta_0 \mbox{ for all } k=0,1,\dots,n-1,
    \end{equation*}
    where $\delta_0$ is a number such that $d(z, [z,\hat{z}])$ and $d(\hat{z}, [z,\hat{z}])$ are both less than $\delta_0$ for every $z, \hat{z} \in \GGG_g(r)$ with $d(z,\hat{z})< \eps^{\prime}$. By the triangle inequality, we have
    \begin{equation*}
        d(g^k x, g^k y)\leq C \delta_0 (e^{-kr/2} + e^{-(n-k)r/4})
    \end{equation*}

    Let $Q$ be the H\"older constant and $\alpha$ the H\"older exponent, then
    \begin{equation*}
        \begin{split}
            | \phi(g^k x) - \phi(g^k y) |&\leq Q d(g^k x, g^k y)^{\alpha}\\
            &\leq Q (C \delta_0)^{\alpha} (e^{-kr/2} + e^{-(n-k)r/4})^{\alpha}.
        \end{split}
    \end{equation*}
    for every $0\leq k \leq n-1$. It follows that
    \begin{equation*}
        \begin{split}
            | \Phi_0(x,n)- \Phi_0(y,n) | &\leq \sum_{k=0}^{n-1} (C\delta_0)^{\alpha} Q (e^{-kr/2} + e^{-(n-k)r/4})^{\alpha}\\
            &\leq (C\delta_0)^{\alpha} Q \sum_{k=0}^{n-1} (e^{-kr/2} + e^{-(n-k)r/4})^{\alpha}\\
            &\leq (C\delta_0)^{\alpha} Q \sum_{k=0}^{\infty} (e^{-kr/2} + e^{-kr/4})^{\alpha} \eqcolon K.
        \end{split}
    \end{equation*}
    Since $K$ is finite and independent of $x, y, n$, this finishes the proof.
\end{proof}

\subsection{Pressure estimates of obstructions of expansivity and ``bad" orbit segments}
We take $r=\eta$ and $g$ as in \lemref{lem:less-press}. Let's consider the decomposition $(\PPP_g(\eta), \GGG_g(\eta), \SSS_g(\eta))$.

\begin{proposition}\label{prop:expansivity}
    There exist $\eps_1 > 0$ and $C^{1}$ neighborhood $\UUU$ of $f$ such that if $g \in \UUU$, then $\Pexp(g,\phi,\eps_1) < P(g, \phi)$.
\end{proposition}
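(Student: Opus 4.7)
The plan is to use \lemref{lem:less-press} applied to ergodic measures concentrated on the non-expansive set $\NE(\eps_1)$, once $\eps_1$ is chosen small enough. The heart of the argument is the following claim: there exist $\eps_1 > 0$ and a $C^1$ neighborhood $\UUU$ of $f$ such that for every $g \in \UUU \cap \Diff^{1+}(M)$ and every ergodic $\mu \in \MMM_g^e(M)$ with $\mu(\NE(\eps_1)) = 1$, at least one of the inequalities $\lambda_1^c(g,\mu) \geq -\eta$ or $\lambda_2^c(g,\mu) \leq \eta$ holds, where $\eta > 0$ is the constant from \lemref{lem:less-press}.

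To prove the claim, for $\mu$-a.e.\ point $x \in \NE(\eps_1)$ I would pick $y_x \in \Gamma_{\eps_1}(x) \setminus \{x\}$. The uniform contraction of $E^s$ in forward time and of $E^u$ in backward time force the difference vector between $x$ and $y_x$ to have vanishing components along $E^s \oplus E^u$ once $\eps_1$ is smaller than a local product constant produced by the plaque families $\WWW_g^{cs}$ and $\WWW_g^{cu}$ of \thmref{thm:loc-palque}; consequently $y_x$ lies locally on a plaque tangent to $E^c = E_1^c \oplus E_2^c$ at $x$. Writing the corresponding vector as $v_x = v_1(x) + v_2(x)$ with $v_i(x) \in E_i^c(x)$, the domination $E_1^c \prec E_2^c$ makes $\|Dg^n(x) v_x\|$ asymptotic to $\|Dg^n(x) v_2(x)\|$ on $\{v_2 \neq 0\}$ in forward time, and to $\|Dg^{-n}(x) v_1(x)\|$ on $\{v_1 \neq 0\}$ in backward time. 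Two-sided $\eps_1$-shadowing then forces the Birkhoff averages of $\varphi_2^c \coloneqq \log\|Dg|_{E_2^c}\|$ along the forward orbit, respectively of $\widehat{\varphi}_2^c$, $\varphi_1^c$ along the appropriate direction, to be close to zero, in a way controlled by the modulus of continuity in \eqref{eq:uni-con}. Since $v_x \neq 0$, at least one of $\{v_1 \neq 0\}, \{v_2 \neq 0\}$ has positive $\mu$-measure, and ergodicity promotes this to one of the two stated Lyapunov estimates.

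With the claim in hand, \lemref{lem:less-press} yields $P_\mu(g,\phi) < a < P(g,\phi)$ for every ergodic $\mu$ with $\mu(\NE(\eps_1)) = 1$, the constant $a$ being independent of $\mu$ and of $g$ on the chosen neighborhood. Taking the supremum in the definition of the pressure of obstructions to expansivity we obtain
$$\Pexp(g,\phi,\eps_1) \leq a < P(g,\phi),$$
with $\UUU$ the intersection of the neighborhood supplied by \lemref{lem:less-press} and the one produced by the claim.

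The main technical obstacle is the claim itself. In the one-dimensional center setting of \cite{climenhaga2021beyond, mongez2025robustness}, non-expansivity directly forces the single central exponent to be close to zero; in our two-dimensional center setting, one must decompose the non-expansive perturbation along $E_1^c$ and $E_2^c$ and use domination to route the forward- and backward-shadowing constraints into separate one-dimensional estimates, while keeping the decomposition measurable enough to apply the Birkhoff ergodic theorem. Robustness in $g$ is then a bookkeeping matter, since the hyperbolicity constants on $E^s, E^u$, the plaque families of \thmref{thm:loc-palque} and \propref{prop:robust thin-trapped}, and the modulus of continuity of $\varphi_1^c, \widehat{\varphi}_2^c$ are all $C^1$-robust.
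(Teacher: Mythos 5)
Your endgame (invoke \lemref{lem:less-press} once you know every ergodic measure carried by $\NE(\eps_1)$ has a central exponent within $\eta$ of zero) is exactly what the paper does; the difference is how you get there. The paper simply cites \cite[Theorem~3.1]{liao2013entropy}, which for diffeomorphisms far from homoclinic tangencies (in particular, for partially hyperbolic systems whose center splits into one-dimensional dominated subbundles) gives a $C^1$ neighborhood $\UUU$ and a scale $\eps_1>0$ such that every hyperbolic ergodic measure of $g\in\UUU$ is almost expansive at scale $\eps_1$; the contrapositive says a non-almost-expansive ergodic measure has a vanishing central exponent, which then feeds into \lemref{lem:less-press}. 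You instead sketch a first-principles proof of this input. Your geometric picture is the right one, and is essentially the mechanism inside the cited theorem, but as written it has genuine gaps rather than bookkeeping: (i) the ``difference vector'' $v_x=v_1(x)+v_2(x)$ is not intrinsically defined on the manifold, since $E_1^c$ and $E_2^c$ do not individually integrate --- one has to work in the intersection of the $cs$- and $cu$-plaques and pass to a linearization, keeping track of the error; (ii) the step from ``the orbit of $y_x$ stays $\eps_1$-close for all $n\in\ZZ$'' to ``the Birkhoff average of $\varphi_1^c$ (or $\widehat\varphi_2^c$) is within $\eta$ of zero'' needs a Pliss-type selection or a quantitative shadowing estimate, not just the modulus of continuity in \eqref{eq:uni-con}; and (iii) the choice $x\mapsto y_x\in\Gamma_{\eps_1}(x)\setminus\{x\}$ and the resulting dichotomy $\{v_2\neq 0\}$/$\{v_2=0\}$ must be made measurably before ergodicity can be applied. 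None of these is false, but each is nontrivial and is precisely what \cite{liao2013entropy} carries out. So your proposal is correct in spirit and more self-contained, at the cost of reproving a cited result; the paper's route is shorter and safer by outsourcing the hard expansivity estimate.
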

\begin{proof}
    In \cite[Theorem 3.1]{liao2013entropy} the authors proved that there exist a $C^1$ neighborhood $\UUU$ and some constant $\eps_1 > 0$ such that $h(g, \Gamma_{\eps_1}(g,x)) = 0$ for every $g \in \UUU$, every $g$-invariant probability $\mu$, and $\mu$-almost every $x \in M$. More specifically, it shows that if $\mu \in \MMM_g^e$ does not have zero Lyapunov exponents, then we have $\Gamma_{\eps_1} (g,x) = \{x\}$ for $\mu$-almost every $x \in M$. It means that the collection of ergodic measures which are not almost expansive is contained in the collection of ergodic measures with zero central Lyapunov exponent. Hence, the \lemref{lem:less-press} implies that $\Pexp(g,\phi,\eps_1) < P(g,\phi)$ for every $g\in \UUU$.
\end{proof}

\begin{remark}
    In \cite{liao2013entropy} it showed this result for $f\in \Diff^{1}(M)\setminus \overline{\mathrm{HT}^{1}(M)}$ which is far away from tangencies. Note that if $f$ is a partially hyperbolic diffeomorphism with a dominated splitting $E^{s}\oplus E_1^c\oplus \cdots \oplus E_k^c\oplus E^{u}$, where $\dim E_i^c = 1$, then $f\in \Diff^{1}(M)\setminus \overline{\mathrm{HT}^{1}(M)}$.
\end{remark}

Now let us prove the orbit segments $\PPP_g(\eta) \cup \SSS_g(\eta)$ has less pressure than $X\times \NN$ which it is showed by the following result. Let $\widetilde{\VVV}$ be a $C^1$ neighborhood of $f$ as in \lemref{lem:less-press}.
\begin{proposition}\label{prop:bad set}
    There exists $\eps_2 > 0$ such that $P(\PPP_g(\eta) \cup \SSS_g(\eta), g, \phi, \delta, \eps) < P(g, \phi)$ for every $0 < \eps < \eps_1$ and $g \in \widetilde{\VVV}$.
\end{proposition}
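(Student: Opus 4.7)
The plan is to decompose the estimate by bounding
\[
\Lambda(\PPP_g(\eta)\cup\SSS_g(\eta),g,\phi,\delta,\eps,n)\leq \Lambda(\PPP_g(\eta),g,\phi,\delta,\eps,n)+\Lambda(\SSS_g(\eta),g,\phi,\delta,\eps,n),
\]
so that it suffices to show each summand grows at exponential rate strictly less than $P(g,\phi)$. The two bounds are symmetric: using the identity
\[
S_n^{-1}\widehat{\varphi}_2^c(g^{n-1}x)=\sum_{j=0}^{n-1}\widehat{\varphi}_2^c(g^j x),
\]
the $\SSS_g(\eta)$ estimate is exactly the $\PPP_{g^{-1}}(\eta)$ estimate applied to $g^{-1}$ with the central potential $\widehat{\varphi}_2^c$, invoking Remark~\ref{rmk:inverse-lya-exponent} to translate between $\lambda_2^c(g,\mu)\le\eta$ and $\lambda_2^c(g^{-1},\mu)\ge-\eta$. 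So I focus only on $\PPP_g(\eta)$.

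For $\PPP_g(\eta)$ I would run the standard Misiurewicz/Katok measure construction to transfer partition-function growth into a metric pressure. Pick a maximizing $(n,\delta)$-separated set $E_n\subset(\PPP_g(\eta))_n$ for $\Lambda(\PPP_g(\eta),g,\phi,\delta,\eps,n)$ and for each $x\in E_n$ select $y_x\in B_n(x,\eps)$ realising the supremum defining $\Phi_\eps(x,n)$ up to an additive constant. Form
\[
\nu_n=Z_n^{-1}\sum_{x\in E_n} e^{\Phi_\eps(x,n)}\delta_{y_x},\qquad \mu_n=\frac{1}{n}\sum_{k=0}^{n-1}g_*^k\nu_n,
\]
and pass to a weak-$*$ subsequential limit $\mu\in\MMM_g(M)$. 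Since $\Pexp(g,\phi,\eps_1)<P(g,\phi)$ by Proposition~\ref{prop:expansivity}, choosing $\eps<\eps_1$ puts us in the almost-expansive regime, and the partition-refinement argument of \cite[\S 4]{pacifico2022existence} yields
\[
h_\mu(g)+\int\phi\,d\mu\geq \limsup_{n\to\infty}\frac{1}{n}\log\Lambda(\PPP_g(\eta),g,\phi,\delta,\eps,n)-\omega(\delta),
\]
where $\omega(\delta)\to 0$ as $\delta\to 0$.

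The crucial Lyapunov estimate is that each $x\in E_n\subset\PPP_g(\eta)$ satisfies $S_n\varphi_1^c(x)\geq -\eta n$, while $d_n(x,y_x)<\eps$ together with uniform continuity of $\varphi_1^c$ produces a modulus $\gamma(\eps)\to 0$ with $|S_n\varphi_1^c(x)-S_n\varphi_1^c(y_x)|\leq \gamma(\eps)\,n$. Hence
\[
\int\varphi_1^c\,d\mu_n=\frac{1}{n}\int S_n\varphi_1^c\,d\nu_n\geq -\eta-\gamma(\eps),
\]
which passes to the weak-$*$ limit by continuity of $\varphi_1^c$ to give $\lambda_1^c(g,\mu)\geq -\eta-\gamma(\eps)$. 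Choosing the constant $\eta$ in the decomposition strictly smaller than the threshold produced by Lemma~\ref{lem:less-press}, and then taking $\eps_2$ so small that $\gamma(\eps)$ still leaves $\lambda_1^c(g,\mu)$ above that threshold, Lemma~\ref{lem:less-press} yields $P_\mu(g,\phi)\leq a<P(g,\phi)$. Shrinking $\delta$ further so that $\omega(\delta)<P(g,\phi)-a$ closes the bound $P(\PPP_g(\eta),g,\phi,\delta,\eps)<P(g,\phi)$. The main obstacle is precisely this three-way bookkeeping between the scales $\delta,\eps$ and the Lyapunov slack: $\eps<\eps_1$ is needed for entropy expansivity, $\eps<\eps_2$ must be small enough for the uniform-continuity error $\gamma(\eps)$ to be absorbed into the strict inequality of Lemma~\ref{lem:less-press}, and $\delta$ must be small enough that the Misiurewicz approximation error $\omega(\delta)$ fits in the residual $P(g,\phi)-a$. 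Once these choices are fixed, the symmetric argument for $\SSS_g(\eta)$ gives the same bound, and the two estimates combine into the stated conclusion.
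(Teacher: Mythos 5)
Your proposal is essentially the paper's argument: a Misiurewicz-type measure construction to push the partition-function growth rate onto $P_\mu(g,\phi)$ for a weak* limit measure $\mu$, followed by the observation that the defining inequalities of $\PPP_g(\eta)$ (resp. $\SSS_g(\eta)$) force $\lambda_1^c(g,\mu)\geq -\eta$ (resp. $\lambda_2^c(g,\mu)\leq\eta$), so Lemma~\ref{lem:less-press} caps $P_\mu(g,\phi)$ by $a<P(g,\phi)$. Two details differ. First, you split $\PPP_g\cup\SSS_g$ and estimate the two partition functions separately; the paper runs the Misiurewicz construction once on the union and then asserts $\lambda_1^c(\mu)\geq-\eta$ \emph{or} $\lambda_2^c(\mu)\leq\eta$ for the single limit measure, which is a bit loose when $E_n$ genuinely mixes $\PPP$- and $\SSS$-points --- your splitting (or, equivalently, decomposing the weak* limit into $\PPP$- and $\SSS$-components and using affinity of $P_\mu$) is the cleaner way to make that step airtight. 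Second, you build $\nu_n$ from the scale-$\eps$ weights $e^{\Phi_\eps(x,n)}\delta_{y_x}$, which introduces the modulus $\gamma(\eps)$ in the Lyapunov bound and forces you to re-shrink $\eta$ below the Lemma~\ref{lem:less-press} threshold; the paper instead runs the argument at scale $0$ (weights $e^{\Phi_0(x,n)}\delta_x$, so $\int\varphi_1^c\,d\mu_n\geq -\eta$ exactly) and only at the end passes from $P(\cdot,\delta)$ to $P(\cdot,\delta,\eps)$ by uniform continuity of $\phi$, which avoids touching $\eta$ at all. Finally, your $\omega(\delta)\to 0$ error term and the appeal to almost-expansivity via $\Pexp(g,\phi,\eps_1)<P(g,\phi)$ are unnecessary: the second half of \cite[Theorem 9.10]{walters2000introduction} already gives $\limsup_n\frac1n\log\Lambda(\cdot,\delta,n)\leq P_\mu(g,\phi)$ at fixed $\delta$ with no expansivity hypothesis and no residual error, so that step can be dropped.
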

\begin{proof}
    Let $E_n \subset (\PPP_g(\eta) \cup \SSS_g(\eta))_n$ be a maximizing $(n,\delta)$-separated set with $\delta>0$ for $\Lambda(\PPP_g(\eta) \cup \SSS_g(\eta), g, \phi, \delta, n)$, i.e. $\Lambda(\PPP_g(\eta) \cup \SSS_g(\eta), g, \phi, \delta, n) = \sum_{x\in E_n} e^{\Phi_0(x,n)}$. Then consider the measures
    \begin{equation*}
        \begin{split}
            \nu_n &\coloneqq \frac{\sum_{x\in E_n} e^{\Phi_0(x,n)}\delta_x}{\sum_{x\in E_n} e^{\Phi_0(x,n)}},\\
            \mu_n &\coloneqq \frac1n \sum_{i=0}^{n-1} g_{*}^i \nu_{i}.
        \end{split}
    \end{equation*}
    By compactness there is a sequence $n_k \rightarrow \infty$ such that $\mu_{n_k}$ converges in the weak* topology. Let $\mu=\lim_{k} \mu_{n_k}$, then $\mu$ is $g$-invariant. By the second part of the proof of \cite[Theorem 9.10]{walters2000introduction}, we have $P(\PPP_g(\eta)\cup \SSS_g(\eta), g, \phi, \delta)\leq P_{\mu}(g,\phi)$. Since $\PPP_g(\eta)$ and $\SSS_g(\eta)$ are characterized by \eqref{eq:PGS}, we have $\lambda_1^c(\mu)\geq -\eta$ or $\lambda_2^c(\mu)\leq \eta$. Hence
    \begin{equation*}
        P(\PPP_g(\eta)\cup \SSS_g(\eta), g, \phi, \delta)< a< P(g,\phi).
    \end{equation*}

    By the uniformly continuity of $\phi$, there exists $\eps_2$ such that
    \begin{equation*}
        P(\PPP_g(\eta)\cup \SSS_g(\eta), g, \phi, \delta, \eps)< a< P(g,\phi), \mbox{ for all } 0 < \eps < \eps_2 \mbox{ and } g\in \widetilde{\VVV}.
    \end{equation*}
    This finishes the proof.
\end{proof}

\subsection{Proof of \thmref{thm:main}}
Let $f:M\rightarrow M$ be a partially hyperbolic diffeomorphism with a dominated splitting $E^{s}\oplus E_1^c \oplus E_2^c \oplus E^{u}$ and $\phi:M\rightarrow \RR$ a H\"older continuous potential.

Let $\UUU$ be a $C^1$ neighborhood of $f$ satisfying the above discussion. We choose a $\eps$ such that $0<\eps < \eps^{\prime}, \eps_1,\eps_2$, where $\eps^{\prime}, \eps_1$ and $\eps_2$ as in \propref{prop:bowen}, \propref{prop:expansivity} and \propref{prop:bad set}, respectively. For every $g\in \UUU\cap \Diff^{1+}(M)$, there exists a decomposition $(\PPP_g(\eta), \GGG_g(\eta), \SSS_g(\eta))$ for $X\times \NN$ such that $\GGG_g(\eta)$ has the Bowen property at scale $\eps$ and
\begin{equation*}
    \begin{split}
        &\Pexp(g,\phi,\eps) < P(g,\phi),\\
        &P(\PPP_g(\eta)\cup \SSS_g(\eta), g, \phi, \delta, \eps)< P(g,\phi),
    \end{split}
\end{equation*}
for every $g\in \UUU\cap \Diff^{1+}(M)$ and $\delta > 0$. Applying \propref{prop:specification}, we obtain that $g$ has specification on $\GGG_g(\eta)$ at scale $\delta$ for every $g\in \UUU\cap \Diff^{1+}(M)$ with $\delta< 2000\eps$. Therefore, by \thmref{thm:ct-criteria}, $(g,\phi)$ has a unique equilibrium state.

\section{Proof of \thmref{thm:main+}}
The proof is similar to that of \thmref{thm:main}. We give a sketch proof which can be adapted from the arguments in \secref{sec:main-proof}.

Let $f: M\rightarrow M$ be a $C^{1+}$ partially hyperbolic diffeomorphism on a compact $M$ with a dominated splitting
    \begin{equation*}
        TM=E^s\oplus E_1^c \oplus \cdots \oplus E_k^c \oplus E^u,
    \end{equation*}
where $\dim E_i^c = 1$ for $i=1, \dots, k$. And we have the inequality \eqref{eq:ent-ineq}:
\begin{equation*}
    h_{\top}(f)- \max\{h^u(f), h^s(f)\}> \sup\phi - \inf\phi\geq 0.
\end{equation*}

Let $\lambda_i^c(f,\mu)$ be the Lyapunov exponent corresponding to $E_i^c$ for $i=1,\dots,k$, respectively.

\begin{lemma}
    Let $\mu$ be an ergodic equilibrium state for $(f,\phi)$. If \eqref{eq:ent-ineq} holds, then there exist a $i\in \{1,\dots,k-1\}$ such that
    \begin{itemize}
        \item $\lambda_j^c(f,\mu)< 0$, if $j\leq i$;
        \item $\lambda_j^c(f,\mu)> 0$, if $j\geq i+1$.
    \end{itemize}
\end{lemma}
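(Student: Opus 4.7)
The plan is to adapt the proof of \lemref{lem:hyp-meas} to the setting with $k$ one-dimensional central subbundles. The key ingredients are the strict monotonicity of the central Lyapunov exponents under the dominations $E_j^c\prec E_{j+1}^c$, the Ledrappier--Young inequalities \eqref{eq:u-ent-formula}, and the entropy gap \eqref{eq:ent-ineq}.

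First, since each sub-splitting $E_j^c\prec E_{j+1}^c$ is dominated between one-dimensional bundles, for every ergodic $\mu$ the central Lyapunov exponents satisfy the strict ordering
\[
\lambda_1^c(f,\mu)<\lambda_2^c(f,\mu)<\cdots<\lambda_k^c(f,\mu).
\]
This reduces the task to locating the unique index at which the sign of the exponents changes and to showing that no intermediate exponent equals zero.

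Arguing exactly as in \lemref{lem:hyp-meas}, I would first rule out $\lambda_1^c(f,\mu)\geq 0$: monotonicity would then force every central exponent to be non-negative, the backward Ledrappier--Young inequality would give $h_\mu(f)=h_\mu^s(f)\leq h^s(f)$, hence $P_\mu(f,\phi)\leq h^s(f)+\sup\phi$, contradicting the equilibrium property via any measure of maximal entropy $\nu$, which satisfies $P_\nu(f,\phi)\geq h_{\top}(f)+\inf\phi>h^s(f)+\sup\phi$ by \eqref{eq:ent-ineq}. A symmetric argument yields $\lambda_k^c(f,\mu)>0$. Hence $i\coloneqq\max\{j:\lambda_j^c(f,\mu)<0\}$ lies in $\{1,\dots,k-1\}$, and by monotonicity $\lambda_j^c<0$ for $j\leq i$ and $\lambda_j^c\geq 0$ for $j>i$.

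The main obstacle is to exclude the borderline case $\lambda_{i+1}^c(f,\mu)=0$, which by the strict ordering above is the only way a zero central exponent could arise. To rule it out I would pass to the coarser dominated splitting $TM=E^s\oplus\tilde{E}^c_-\oplus\tilde{E}^c_+\oplus E^u$ with $\tilde{E}^c_-=E_1^c\oplus\cdots\oplus E_i^c$ and $\tilde{E}^c_+=E_{i+1}^c\oplus\cdots\oplus E_k^c$, and replay the dichotomy of \lemref{lem:hyp-meas} at this coarser level: if $\lambda_{i+1}^c=0$ then every Lyapunov exponent along $\tilde{E}^c_+$ is non-negative, and a suitable partial-entropy version of Ledrappier--Young along the plaque family tangent to $\tilde{E}^c_+\oplus E^u$ should yield $P_\mu(f,\phi)\leq h^s(f)+\sup\phi$, again contradicting \eqref{eq:ent-ineq}. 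The delicate point is to adapt the partial-entropy formalism of \cite{hu2017unstable,yang2021entropy} so that it gives a useful bound when the ``stable-like'' side is merely a dominated subbundle rather than the uniformly contracting $E^s$; once this is in place, the rest of the argument parallels the $k=2$ proof.
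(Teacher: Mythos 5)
Your first three steps are correct and essentially forced: the domination $E_j^c\prec E_{j+1}^c$ between one-dimensional bundles gives the strict chain $\lambda_1^c(f,\mu)<\cdots<\lambda_k^c(f,\mu)$ for every ergodic $\mu$; the two cases ``all central exponents non-negative'' and ``all non-positive'' are ruled out exactly as in \lemref{lem:hyp-meas} via \eqref{eq:us-ent} and the entropy gap \eqref{eq:ent-ineq}, yielding $\lambda_1^c(f,\mu)<0$ and $\lambda_k^c(f,\mu)>0$; and with $i\coloneqq\max\{j:\lambda_j^c(f,\mu)<0\}$ the only way the conclusion can fail is $\lambda_{i+1}^c(f,\mu)=0$. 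Up to this point you are aligned with what the paper's ``similar to Section~\ref{sec:main-proof}'' sketch would produce; note, however, that for $k=2$ the two excluded cases are exhaustive (they are exactly $\lambda_1^c\geq 0$ and $\lambda_2^c\leq 0$), so \lemref{lem:hyp-meas} has no remaining borderline case, whereas for $k\geq 3$ the pattern $(-,\dots,-,0,+,\dots,+)$ is not covered by either case. You are right to isolate this as the genuine content of the generalization.

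The gap is in the last step, and I do not think the strategy you sketch closes it. If $\lambda_{i+1}^c(f,\mu)=0$, a ``partial-entropy Ledrappier--Young'' applied to the coarser splitting $(E^s\oplus\tilde{E}^c_-)\oplus(\tilde{E}^c_+\oplus E^u)$ would, at best, identify $h_\mu(f)$ with the conditional entropy along the plaque family tangent to $\tilde{E}^c_+\oplus E^u$ (which contains the Pesin unstable directions), and then bound it by the \emph{topological} entropy along that coarser center-unstable bundle. But that quantity is in general strictly larger than $h^u_{\top}(f)$ (the plaques are strictly larger than the strong unstable leaves), so the resulting inequality does not contradict \eqref{eq:ent-ineq}, which only controls $h_{\top}(f)-h^u(f)$ and $h_{\top}(f)-h^s(f)$. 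The same difficulty appears on the stable side: the bound you would get involves entropy along $E^s\oplus\tilde{E}^c_-$, not along $E^s$, and the hypothesis gives you no control over the gap between those two. In other words, the mechanism that makes the $k=2$ argument close — that ``no negative (resp.\ positive) central exponents'' collapses $h_\mu$ onto the strong-stable (resp.\ strong-unstable) entropy — simply does not engage when there is a genuine mix of signs with a single zero in the middle. You flagged this as ``delicate,'' but it is more than that: as written, the inequality chain does not terminate in a contradiction, so a different idea is needed to exclude $\lambda_{i+1}^c(f,\mu)=0$.
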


\begin{proposition}
    There exists a $C^1$ neighborhood $\VVV$ of $f$, such that for every $g\in \VVV\cap \Diff^{1+}(M)$, we have
    \begin{equation*}
        h_{\top}(g)- \max\{h^u(g), h^s(g)\}> \sup\phi - \inf\phi\geq 0.
    \end{equation*}
\end{proposition}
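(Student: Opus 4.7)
The plan is to adapt the proof of Corollary \ref{cor:open-ent-ineq} verbatim to the $k$-center setting. Both key ingredients used there remain available here: continuity of $g\mapsto P(g,\phi)$ in the $C^1$ topology (via Proposition \ref{prop:cont-press}), and upper semi-continuity of the maps $g\mapsto h^u(g)$ and $g\mapsto h^s(g)$ on $\Diff^{1+}(M)$. The latter holds in the $C^1$ topology, and the former depends on entropy-expansivity, which is known for partially hyperbolic diffeomorphisms whose center bundle is a direct sum of finitely many one-dimensional subbundles (as noted in the excerpt via \cite{Lorenzo2012entropy, liao2013entropy, arbieto2025existence}). In particular, these ingredients work exactly the same way whether the center has two subbundles or $k$ subbundles.

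First I would apply Proposition \ref{prop:cont-press} with the zero potential $\phi\equiv 0$: since $P(g,0)=h_{\top}(g)$ and the hypothesis $h_{\top}(f)-\max\{h^u(f),h^s(f)\}>0$ holds, this yields continuity of $g\mapsto h_{\top}(g)$ in the $C^1$ topology on $\Diff^{1+}(M)$. So, for any prescribed $\eps>0$, there is a $C^1$ neighborhood $\VVV_1$ of $f$ such that $|h_{\top}(f)-h_{\top}(g)|<\eps$ for all $g\in\VVV_1\cap\Diff^{1+}(M)$. Next, by the upper semi-continuity of $g\mapsto h^u(g)$ and $g\mapsto h^s(g)$, I can find a $C^1$ neighborhood $\VVV_2$ of $f$ such that
\begin{equation*}
    \max\{h^u(g),h^s(g)\}<\max\{h^u(f),h^s(f)\}+\eps
\end{equation*}
for every $g\in\VVV_2\cap\Diff^{1+}(M)$.

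Finally, I would pick $\eps$ small, namely
\begin{equation*}
    0<\eps<\tfrac{1}{2}\bigl(h_{\top}(f)+\inf\phi-\max\{h^u(f),h^s(f)\}-\sup\phi\bigr),
\end{equation*}
which is strictly positive by the standing hypothesis \eqref{eq:ent-ineq}, and set $\VVV=\VVV_1\cap\VVV_2$. For every $g\in\VVV\cap\Diff^{1+}(M)$, combining the two inequalities above gives
\begin{equation*}
    h_{\top}(g)-\max\{h^u(g),h^s(g)\}>h_{\top}(f)-\max\{h^u(f),h^s(f)\}-2\eps>\sup\phi-\inf\phi\geq 0,
\end{equation*}
which is exactly the claim.

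I do not expect a serious obstacle here; the argument is structurally identical to that of Corollary \ref{cor:open-ent-ineq}, and the only point worth double-checking is that the hypotheses supporting Proposition \ref{prop:cont-press} and the upper semi-continuity of $h^u,h^s$ are genuinely available for dominated splittings with $k$ one-dimensional center subbundles. They are: entropy-expansivity (hence upper semi-continuity of metric entropy, and therefore of topological pressure at $\phi\equiv 0$) is known in this generality, and the approximation via Katok horseshoes used for lower semi-continuity of pressure does not require a specific number of center bundles. Hence nothing in the two-center proof uses $k=2$ essentially, and the argument goes through unchanged.
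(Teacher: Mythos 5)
Your argument is correct and matches the paper's: the paper proves this proposition only implicitly (the $k$-center section refers the reader back to Section~\ref{sec:main-proof}), and the intended argument is precisely the one in Corollary~\ref{cor:open-ent-ineq}, which you reproduce accurately — continuity of $g\mapsto h_{\top}(g)$ via Proposition~\ref{prop:cont-press} at $\phi\equiv 0$, upper semi-continuity of $h^u, h^s$, and then choosing $\eps$ below half the entropy gap. Your sanity check that both ingredients remain valid with $k$ one-dimensional center subbundles (entropy-expansivity and the Katok horseshoe approximation) is also the right point to verify, and your conclusion is sound.
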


Let $\gamma_1(g,\mu)=\max \{ \lambda_1^c(g,\mu), \dots, \lambda_i^c(g,\mu) \}$ and $\gamma_2(g,\mu)=\min \{ \lambda_{i+1}^c(g,\mu), \dots, \lambda_k^c(g,\mu) \}$. Then we have
\begin{proposition}\label{lem:less-press+}
    There exist $\eta >0$, $a>0$, and a $C^1$ neighborhood $\widetilde{\VVV} \subset \VVV$ such that for every $g\in \widetilde{\VVV} \cap\Diff^{1+}(M)$, we have
    \begin{equation*}
        \sup\{P_{\mu}(g,\phi):\, \mu\in \MMM_g(M), \gamma_1(g,\mu)\geq -\eta \}<a< P(g,\phi),
    \end{equation*}
    and
    \begin{equation*}
        \sup\{P_{\mu}(g,\phi):\, \mu\in \MMM_g(M), \gamma_2(g,\mu)\leq \eta \}<a< P(g,\phi).
    \end{equation*}
\end{proposition}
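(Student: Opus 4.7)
The plan is to mirror the proof of Lemma \ref{lem:less-press} from the two-dimensional center case, with the scalar central exponents $\lambda_1^c(g,\mu), \lambda_2^c(g,\mu)$ replaced by the extremal quantities $\gamma_1(g,\mu)$ and $\gamma_2(g,\mu)$. A key preliminary observation is that under a dominated splitting $E^s \prec E_1^c \prec \cdots \prec E_k^c \prec E^u$ with one-dimensional central subbundles, the central Lyapunov exponents of any ergodic $\mu$ respect the ordering $\lambda_1^c(g,\mu) < \lambda_2^c(g,\mu) < \cdots < \lambda_k^c(g,\mu)$, so that $\gamma_1(g,\mu) = \lambda_i^c(g,\mu)$ and $\gamma_2(g,\mu) = \lambda_{i+1}^c(g,\mu)$ on ergodic measures.

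First, I would establish the pointwise $\eta=0$ version at $f$. The preceding lemma guarantees every ergodic equilibrium state $\mu^*$ of $(f,\phi)$ satisfies $\lambda_i^c(f,\mu^*) < 0$ and $\lambda_{i+1}^c(f,\mu^*) > 0$ (taking the splitting index supplied by the thin-trapping hypothesis). Define
\begin{equation*}
    P_1^+(f,\phi) = \sup\{P_\mu(f,\phi) : \mu \in \MMM_f^e(M),\ \gamma_1(f,\mu) \geq 0\},
\end{equation*}
and analogously $P_2^-(f,\phi)$. Since $f$ is entropy-expansive and the maps $\mu \mapsto \int \log\|Df|_{E_j^c}\|\, d\mu$ are continuous (the integrands are continuous thanks to continuity of the dominated subbundles), the sets $\{\mu : \gamma_1(f,\mu) \geq 0\}$ and $\{\mu : \gamma_2(f,\mu) \leq 0\}$ are weak-$*$ closed, and $\mu \mapsto P_\mu(f,\phi)$ is upper semi-continuous. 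Hence each supremum is attained; by the previous lemma the maximizer cannot be an equilibrium state, which forces $P_1^+(f,\phi) < P(f,\phi)$ and $P_2^-(f,\phi) < P(f,\phi)$. Ergodic decomposition transfers these strict inequalities from $\MMM_f^e(M)$ to all of $\MMM_f(M)$.

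Second, I would perform the thickening in $\eta$ and the extension to a $C^1$ neighborhood simultaneously, exactly as in \cite[Lemma 4.1 and Corollary 4.2]{mongez2025robustness}. For fixed $g$ the family of closed sets $\{\mu : \gamma_1(g,\mu) \geq -\eta\}$ decreases to $\{\mu : \gamma_1(g,\mu) \geq 0\}$ as $\eta \searrow 0$, so by upper semi-continuity of $P_\mu(g,\phi)$ and compactness of $\MMM_g(M)$, the corresponding suprema converge. Fix $a \in (\max\{P_1^+(f,\phi), P_2^-(f,\phi)\}, P(f,\phi))$ and choose $\eta_0 > 0$ so that both suprema with threshold $-\eta_0$ (respectively $\eta_0$) remain $< a$ at $g = f$. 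To transport this to $g \in \widetilde{\VVV}$, I would use: (i) continuity of $g \mapsto P(g,\phi)$ on $\Diff^{1+}(M)$ via Proposition \ref{prop:cont-press}; (ii) $C^1$ continuity of the bundles $E_j^c(g,\cdot)$ from the dominated-splitting theory, which gives joint continuity of $(g,\mu) \mapsto \int \log\|Dg|_{E_j^c}\|\, d\mu$; and (iii) joint upper semi-continuity of $(g,\mu) \mapsto P_\mu(g,\phi)$ on $\widetilde{\VVV} \times \MMM(M)$. If the desired bound failed on any $C^1$ neighborhood of $f$, one would extract a sequence $(g_n,\mu_n)$ with $g_n \to f$ and $\gamma_1(g_n,\mu_n) \geq -\eta_0$, $P_{\mu_n}(g_n,\phi) \geq a$; passing to a weak-$*$ limit $(f,\mu_\infty)$ yields a contradiction with the pointwise inequality at $f$.

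The main obstacle is precisely this third step: the strict inequality at $f$ must survive the lack of lower semi-continuity of both $\mu \mapsto h_\mu(g)$ and $g \mapsto h_\mu(g)$. The compactness/contradiction argument above is delicate because one must simultaneously control the direction bundles $E_j^c(g,\cdot)$ (continuous only in $C^1$ topology of $g$, not joint continuity at points where $\mu$ concentrates mass) and the free energy. Beyond this, the proof is a routine adaptation of \cite[Lemma 4.1]{mongez2025robustness}, taking advantage of the fact that $\gamma_1$ is a $\max$ of finitely many continuous affine functionals and $\gamma_2$ a $\min$ of finitely many, so closedness and semi-continuity carry over without loss.
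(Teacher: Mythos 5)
Your proposal is correct and follows essentially the same path the paper takes, namely adapting \cite[Lemma 4.1, Corollary 4.2]{mongez2025robustness} to the $k$-bundle central setting: establish the $\eta=0$ strict inequality at $f$ using upper semi-continuity of $\mu\mapsto P_\mu(f,\phi)$ (from entropy expansivity) and weak-$*$ closedness of $\{\mu: \gamma_1(f,\mu)\geq 0\}$, then thicken in $\eta$ by monotone convergence of suprema over nested compacts, and finally transport to a $C^1$ neighborhood via a compactness/contradiction argument using joint upper semi-continuity of the free energy and continuity of $g\mapsto P(g,\phi)$. Your preliminary observation that domination between one-dimensional bundles forces strict ordering $\lambda_1^c<\cdots<\lambda_k^c$, so that $\gamma_1=\lambda_i^c$ and $\gamma_2=\lambda_{i+1}^c$ on ergodic measures, is a clean simplification even if not strictly needed; working directly with $\gamma_1=\max$ and $\gamma_2=\min$ of continuous affine functionals, as you note at the end, already yields the required closedness and continuity.
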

It is clear that $\gamma_2(g,\mu)\leq \eta$ is equivalent to $\gamma_2(g^{-1},\mu)\geq -\eta$.

Given $g\in \widetilde{\VVV} \cap \Diff^{1+}(M)$, let $\beta_1(x)=\max\{ \varphi_j^c(x) \}_{j=1}^{i}$ and $\widehat{\beta}_2(x)=\min\{ \widehat{\varphi}_j^c(x) \}_{j=i+1}^{k}$, where $\varphi_j^c(x)=\log \| Dg |_{E_{j}^c(x)} \|$ and $\widehat{\varphi}_j^c(x)=\log \| Dg^{-1} |_{E_j^c(x)} \|$ for $j=1,\dots,k$. For a ergodic invariant measure $\mu$, we have $\gamma_1(g,\mu)=\int \beta_1(x) \,d\mu$ and $\gamma_2(g^{-1},\mu)=\int \widehat{\beta}_2(x) \,d\mu$. 

Now we can define the define a decomposition for $(M,g)$, they are characterized by
\begin{equation}
    \begin{aligned}
        \PPP_g(r) & = \{ (x,n)\in M\times \NN:\, S_n\beta_1(x)\geq -rn\},\\
        \GGG_g(r) & = \{ (x,n)\in M\times \NN:\, S_k\beta_1(x)< -rk \mbox{ and }  S_{k}^{-1}\widehat{\beta}_2(g^{k-1}x)<-rk, \, \forall \, 0\leq k\leq n\},\\
        \SSS_g(r) & = \{ (x,n)\in M\times \NN:\, S_n^{-1} \widehat{\beta}_2(g^{n-1}x)\geq -rn\} \setminus \PPP_g.
    \end{aligned}
\end{equation}

Consider the diffeomorphism $f:M\rightarrow M$ with dominated splitting $TM=E^s\oplus E_1^c \oplus \cdots \oplus E_k^c \oplus E^u$. We denote $E^{cs}=E^s \oplus E_1^c \oplus\cdots\oplus E_i^c$ and $E^{cu}=E_{i+1}^c \oplus\cdots\oplus E_k^c \oplus E^u$.

\begin{theorem}\label{thm:k-disks}
    There exist a $C^1$ neighborhood $\widetilde{\UUU}$ of $f$, a constants $\rho=\rho(f)>0$, $C=C(f)>0$ and $\eps^{\prime}>0$ such that for every $g\in \widetilde{\UUU}$, we have a continuous collection of trapped plaque families $\widetilde{\WWW}_{g}^{cs}$ tangent to $E_g^{cs}$ with diameter smaller than $\rho$.

    Moreover, if $(x,n)\in \GGG_g(r)$, then the following statements holds:
    \begin{itemize}
        \item[(1)] There are two $C^1$ embedded disks $W_{\rho}^{cs}(g,x)$ and $W_{\rho}^{cu}(g,x)$ which dimensions equal to $\dim E^{cs}$ and $\dim E^{cu}$ respectively, and radius $\rho>0$, centered at $x$, such that
        \begin{equation*}
            T_x W_{\rho}^{cs}(g,x)=E^{cs}(g,x) \mbox{ and } T_x W_{\rho}^{cu}(g,x)=E^{cu}(g,x);
        \end{equation*}
        \item[(2)] $W_{\rho}^{cs}(g,x)$ and $W_{\rho}^{cu}(g,x)$ depend continuously on both $x$ and $g$ in the $C^1$ topology;
        \item[(3)] If $y\in W_{\rho}^{cs}(g,x)$ and $z\in W_{\rho}^{cu}(g,x)$, then
        \begin{equation*}
            d(g^kx,g^ky)\leq C e^{-kr/2} d(x,y) \mbox{ and } d(g^{n-k}x,g^{n-k}z)\leq C e^{-kr/2} d(g^n x,g^n z),
        \end{equation*}
        for every $0\leq k \leq n$;
        \item[(4)] $\widetilde{\WWW}_{g}^{cs}(x)\subset W_{\rho}^{cs}(g,x)$ for every $g\in \widetilde{\UUU}$;
        \item[(5)] If $d(x,y)< \eps^{\prime}$, then
        \begin{equation}\label{eq:k-LPS}
            \widetilde{\WWW}_{g}^{cs}(y) \cap W_{\rho}^{cu}(g,x) \neq \varnothing,
        \end{equation}
        where $y$ is not necessary in some $(\GGG_g(r))_n$. Furthermore, if $z\in \widetilde{\WWW}_{g}^{cs}(y) \cap W_{\rho}^{cu}(g,x)$, then $d(x,z) \leq C d(x,y)$.
    \end{itemize}
\end{theorem}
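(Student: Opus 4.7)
The proof runs in parallel with that of \thmref{thm:loc-disks}; the only structural novelty is that the grouping $E^{cs} = E^s \oplus E_1^c \oplus \cdots \oplus E_i^c$ and $E^{cu} = E_{i+1}^c \oplus \cdots \oplus E_k^c \oplus E^u$ introduces multi-dimensional central components. First I would obtain the trapped plaque families $\widetilde{\WWW}_g^{cs}$ of diameter smaller than $\rho$ by a direct application of \propref{prop:robust thin-trapped}, using the assumption of \thmref{thm:main+} (with $i=j$) that $E^{cs}$ is thin trapped; robustness in the $C^1$ topology gives the existence for every $g \in \widetilde{\UUU}$.

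Next I would construct $W_\rho^{cu}(g,x)$ by transplanting the induction from the two-dimensional proof essentially verbatim. The coarse splitting $E^{cs} \prec E^{cu}$ is still a dominated splitting, so \thmref{thm:loc-palque} applied to $g^{-1}$ yields locally invariant plaque families $\WWW_g^{cu}$ tangent to $E^{cu}$ over a $C^1$ neighborhood of $f$. I would then pick $c_0$ with $D_g(x,c_0) \subset \WWW_{g,x}^{cu}(B_\rho(0))$, use uniform continuity of $\log\|Dg^{-1}|_{E_g^{cu}(\cdot)}\|$ to find $c_1 < c_0$ on which the backward contraction rate varies by at most a factor $e^{r/2}$, and induct on $k$ to show
\begin{equation*}
    \prod_{l=0}^{k} \|Dg^{-1}|_{E_g^{cu}(g^{n-l}(y))}\| \leq e^{-kr/2}
\end{equation*}
for $(x,n) \in \GGG_g(r)$, $y \in D_g(x,c_1)$, and $0 \leq k \leq n$. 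Setting $W_\rho^{cu}(g,x) := D_g(x,\rho)$ for $\rho$ small enough then delivers items (1)--(3). The construction of $W_\rho^{cs}(g,x)$ is entirely symmetric, swapping $g \leftrightarrow g^{-1}$, $\WWW_g^{cu} \leftrightarrow \WWW_g^{cs}$, and $\widehat{\beta}_2 \leftrightarrow \beta_1$. Item (4) then follows from $\widetilde{\WWW}_g^{cs}(x) \subset \WWW_g^{cs}(x)$ by shrinking the diameter of $\widetilde{\WWW}_g^{cs}$ if needed, and item (5) follows from the uniform transversality of $E^{cs}$ and $E^{cu}$ combined with continuous dependence of the disks on $x$ and $g$ in the $C^1$ topology.

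The main obstacle, and the only place where the multi-dimensional center requires care beyond the two-dimensional case of \thmref{thm:loc-disks}, is to justify that the scalar quantities $\beta_1(x)$ and $\widehat{\beta}_2(x)$ (which aggregate the individual central log-derivatives) actually control the operator norms $\|Dg|_{E^{cs}(x)}\|$ and $\|Dg^{-1}|_{E^{cu}(x)}\|$ that drive the induction above. The chain of dominated splittings $E^s \prec E_1^c \prec \cdots \prec E_i^c$ (respectively its reversed image under $g^{-1}$) is what makes this work: the sub-bundles inside each group are uniformly transverse with uniformly bounded angles, so the operator norm on the group is comparable up to a universal constant to the norm on the least contracting one-dimensional piece, and this one-dimensional quantity is in turn majorized by $\beta_1$ (respectively $\widehat{\beta}_2$). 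Once this comparison is in place, the exponential contraction estimate absorbs the universal constants into the factor $e^{-r/2}$ (possibly after shrinking the neighborhood $\widetilde{\UUU}$), and the remainder of the argument reduces to the two-dimensional proof.
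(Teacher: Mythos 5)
Your plan of transplanting the proof of Theorem~\ref{thm:loc-disks} onto the coarse splitting $E^{cs}\prec E^{cu}$ is the intended one; the paper gives no separate argument for this theorem and simply refers back to Section~\ref{sec:main-proof}. But the step you yourself single out as the crux is where the argument, as you state it, breaks. You assert that the quantity driving the induction, $\|Dg^{-1}|_{E^{cu}_g(\cdot)}\|$, is majorized up to a uniform constant by $e^{\widehat{\beta}_2}$ with $\widehat{\beta}_2=\min_{i+1\le j\le k}\widehat{\varphi}_j^c$. This is the wrong extremum. The operator norm on the $cu$-block is governed by the direction that contracts \emph{slowest} under $Dg^{-1}$, i.e.\ by $\max_{i+1\le j\le k}\widehat{\varphi}_j^c$; after an adapted metric renders the chain $E_{i+1}^c\prec\cdots\prec E_k^c\prec E^u$ pointwise, this maximum is $\widehat{\varphi}_{i+1}^c$, the innermost central direction. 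The defining inequality $S_k^{-1}\widehat{\beta}_2(g^{k-1}x)<-rk$ in $\GGG_g(r)$ controls only the minimum $\widehat{\varphi}_k^c$ and says nothing about $\widehat{\varphi}_{i+1}^c$, so the base case $\|Dg^{-1}|_{E^{cu}_g(x)}\|\le e^{-r}$ of the two-sided induction is unsupported and item~(3) for $W^{cu}_\rho(g,x)$ is not established.

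The $cs$-side does not have this defect, since $\beta_1=\max_{1\le j\le i}\varphi_j^c$ is exactly the quantity that bounds $\|Dg|_{E^{cs}_g(\cdot)}\|$ and the $\GGG_g(r)$-condition controls it. To make the $cu$-side close, one has to base $\GGG_g(r)$ on $\max_j\widehat{\varphi}_j^c$ --- or, what is asymptotically equivalent under domination, on the single function $\widehat{\varphi}_{i+1}^c$, which also matches the identity $\gamma_2(g^{-1},\mu)=\max_j\lambda_j^c(g^{-1},\mu)=\lambda_{i+1}^c(g^{-1},\mu)$ used elsewhere in this section. As written, your comparison step inherits and then leans on what appears to be a $\min$/$\max$ slip in the paper's definition of $\widehat{\beta}_2$; a correct proof must repair that definition (and record the adapted-metric reduction explicitly, since otherwise the uniform transversality constant compounds across the product in item~(3)) before the induction of Theorem~\ref{thm:loc-disks} can be run.
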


Let $(x,n)\in \GGG_g(r)$ and $y$ be a contained in a Bowen ball of $x$. We have no idea about information which like item (3) of $y$. To overcome this obstruction, we can use uniform continuity of $\beta_1$ and $\widehat{\beta}_2$. Given $r>0$, let $\eps =\eps(r) >0$ be small enough so that for any $x,y \in M$, we have
\begin{equation}
    d(x,y) < \eps \Rightarrow | \beta_1(x)-\beta_1(y) | \leq \frac{r}{2} \mbox{ and } | \widehat{\beta}_2(x)-\widehat{\beta}_2(y) | \leq \frac{r}{2}.
\end{equation}
In particular, if $x\in \GGG_g(r)$ and $y\in B_n(x,\eps)$, then
\begin{equation*}
    S_k \beta_1(y) \leq S_k \beta_1(x) + rk/2 < -rk/2 , \mbox{ for every } 0\leq k \leq n.
\end{equation*}
and
\begin{equation*}
    S_k^{-1} \widehat{\beta}_2(y) \leq S_k^{-1} \widehat{\beta}_2(x) + rk/2 < -rk/2 , \mbox{ for every } 0\leq k \leq n.
\end{equation*}
So that $y\in \GGG_g(r/2)$. Therefore we have certain arguments in the following.

\begin{proposition}{\rm (Specification)}\label{prop:specification+}
    Let $f$ has minimal stable foliation and $\delta> 0$, there exist a $C^1$ neighborhood $\UUU$ of $f$ such that if $g\in \UUU\cap \Diff^{1+}(M)$ then g has specification on $\GGG_g(r)$ at scale $\delta>0$.
\end{proposition}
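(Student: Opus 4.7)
The plan is to replicate the proof of Proposition \ref{prop:specification} verbatim, substituting Theorem \ref{thm:k-disks} for Theorem \ref{thm:loc-disks} wherever the local center-stable/center-unstable geometry is invoked. The strategy breaks into three steps: (i) fix compatible $C^1$ neighborhoods and scales; (ii) for any two segments in $\GGG_g(r)$, bridge them by a single point using $\eps$-minimality of $\FFF^s$ together with the local product structure afforded by $\widetilde{\WWW}_g^{cs}$ and $W_\rho^{cu}$; (iii) invoke the one-step specification lemma to extend to arbitrarily many segments.

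First I would take $\widetilde{\UUU}$ and $\eps'$ from Theorem \ref{thm:k-disks}, fix $\eps<\eps'$, and, using that minimality of $\FFF^s_f$ is $C^1$-open in the $\eps$-minimal sense (\cite[Lemma 8.1]{climenhaga2018unique}), obtain a neighborhood $\UUU_\eps$ on which every $g$ has $\eps$-minimal stable foliation with a uniform radius $R=R(g)$. Set $\UUU=\widetilde{\UUU}\cap\UUU_\eps$. Because $\|Dg|_{E^s}\|<\xi<1$ uniformly on $\UUU$, I can choose $N\in\NN$ so that any two points lying in a common stable disk of radius $\leq R$ are brought within $\delta/2$ by $g^N$.

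Then, given $(x_1,n_1),(x_2,n_2)\in\GGG_g(r)$, I would take a stable disk $D\subset W^s(g^{-N}x_2)$ of internal radius $R/2$ centered at $g^{-N}x_2$, pick $\widehat{y}\in D\cap B_\eps(g^{n_1}x_1)$ by $\eps$-minimality, and use item (5) of Theorem \ref{thm:k-disks} together with the $k$-center analogue of Remark \ref{rmk:Wcu} to select
\[
y\in\widetilde{\WWW}_g^{cs}(\widehat{y})\cap W_\rho^{cu}(g,g^{n_1}x_1),
\]
with $d(y,g^{n_1}x_1)\leq C\eps$. Shadowing of $g^{-n_1}y$ then comes from two independent estimates: the backward contraction along $W_\rho^{cu}$ (item (3) of Theorem \ref{thm:k-disks} applied to $g^{-1}$) gives $d_{n_1}(x_1,g^{-n_1}y)\leq C^2\eps$; in the forward direction, $\widehat{y}\in W^s(g^{-N}x_2)$ produces $d(g^jx_2,g^j(g^N\widehat{y}))\leq \tfrac{1}{2}\delta\xi^j$, while $g^Ny\in\widetilde{\WWW}_g^{cs}(g^N\widehat{y})$ by trapping and item (3) yields $d(g^j(g^N\widehat{y}),g^j(g^Ny))\leq \tfrac{1}{2}\delta e^{-jr/4}$ (after shrinking the plaque diameter so that $d(g^Ny,g^N\widehat{y})\leq\delta/2$). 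Summing gives $d(g^jx_2,g^j(g^Ny))\leq \tfrac{1}{2}\delta(\xi^j+e^{-jr/4})$ for $0\leq j\leq n_2$. Taking $\tau=N$ furnishes the one-step specification at scale $\eps K$ for a constant $K$ uniform on $\UUU$; by \cite[Lemma 1.2.4.1]{climenhaga2021beyond} this upgrades to the full specification property, and rescaling so that $\eps K\leq\delta$ concludes.

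The only delicate point, identical to its counterpart in Proposition \ref{prop:specification}, is that item (3) of Theorem \ref{thm:k-disks} is stated for starting points of orbit segments in $\GGG_g(r)$, whereas the forward estimate above is applied at $g^N\widehat{y}$, which is not a priori of this form. This is handled by the uniform continuity inequalities involving $\beta_1$ and $\widehat{\beta}_2$: since $g^N\widehat{y}$ sits in a Bowen ball of $x_2$ of radius $\leq\eps$, the defining inequalities for $\GGG_g(r)$ at $x_2$ degrade by at most $r/2$ along the orbit of $g^N\widehat{y}$, placing $(g^N\widehat{y},n_2)\in\GGG_g(r/2)$, which is enough for item (3) to apply. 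Beyond this bookkeeping, no new ingredient is required; the hypothesis that $E^s\oplus E_1^c\oplus\cdots\oplus E_j^c$ is thin trapped (with $j=i$) supplies precisely the plaque family $\widetilde{\WWW}_g^{cs}$ the argument consumes.
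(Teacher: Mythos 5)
Your proposal is a faithful, correct adaptation of the paper's proof of Proposition~\ref{prop:specification} to the $k$-dimensional-center setting, which is exactly what the paper intends (the paper gives no detailed proof of Proposition~\ref{prop:specification+}, only stating that it follows by the arguments of \secref{sec:main-proof} with $\varphi_1^c,\widehat\varphi_2^c$ replaced by $\beta_1,\widehat\beta_2$ and Theorem~\ref{thm:loc-disks} replaced by Theorem~\ref{thm:k-disks}). You correctly identify the one delicate bookkeeping point, namely that the forward estimate is applied at $g^N\widehat y$ rather than at a point known to lie in $\GGG_g(r)$, and you resolve it the same way the paper does, via the uniform continuity of $\beta_1$ and $\widehat\beta_2$ to land in $\GGG_g(r/2)$. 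The only thing worth noting is that the hypothesis $0<\delta<\widehat\eps$ (present in the statement of Proposition~\ref{prop:specification} but dropped from Proposition~\ref{prop:specification+}) is still needed for that step, so it should be restored or absorbed into the final rescaling; this is an omission in the paper's statement, not in your argument.
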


\begin{proposition}{\rm (The Bowen proeprty)}\label{prop:bowen+}
    There exist $\eps^{\prime}>0$ and a $C^1$ neighborhood $\UUU$ of $f$ such that if $g \in \UUU \cap \Diff^{1+}(M)$ and potential $\phi: M\rightarrow \RR$ is H\"older continuous, then $\phi$ has the Bowen property on $\GGG_g(r)$.
\end{proposition}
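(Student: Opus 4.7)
The plan is to follow, \emph{mutatis mutandis}, the argument of \propref{prop:bowen}, replacing \thmref{thm:loc-disks} by \thmref{thm:k-disks} and the one-dimensional functions $\varphi_1^c$, $\widehat{\varphi}_2^c$ by $\beta_1$, $\widehat{\beta}_2$. First I would fix the $C^1$ neighborhood $\UUU$ and the constants $\rho$, $C$, $\eps^\prime$ produced by \thmref{thm:k-disks}, and shrink $\eps^\prime$ further if needed so that the uniform-continuity estimates for $\beta_1$ and $\widehat{\beta}_2$ stated just before \propref{prop:specification+} force any $y\in B_n(x,\eps^\prime)$ with $(x,n)\in \GGG_g(r)$ to satisfy $(y,n)\in \GGG_g(r/2)$. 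This is the step that propagates the local geometric control from $x$ to both endpoints of a Bowen pair.

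Next, for such $(x,n)$ and $y\in B_n(x,\eps^\prime)$, I would invoke item (5) of \thmref{thm:k-disks} to select a bracket point $z\in W_\rho^{cs}(g,x)\cap W_\rho^{cu}(g,y)$ at distance at most a uniform constant $\delta_0$ from both $x$ and $y$. Item (3), applied on the center-stable side using $(x,n)\in \GGG_g(r)$ and on the center-unstable side using $(y,n)\in \GGG_g(r/2)$, then yields
\begin{equation*}
d(g^k x,g^k z)\leq Ce^{-kr/2}\delta_0, \qquad d(g^k y,g^k z)\leq Ce^{-(n-k)r/4}\delta_0,
\end{equation*}
for every $0\leq k\leq n-1$, so the triangle inequality gives $d(g^k x,g^k y)\leq C\delta_0\bigl(e^{-kr/2}+e^{-(n-k)r/4}\bigr)$.

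Finally, using H\"older continuity of $\phi$ with exponent $\alpha$ and constant $Q$, I would bound
\begin{equation*}
|\Phi_0(x,n)-\Phi_0(y,n)|\leq \sum_{k=0}^{n-1} Q(C\delta_0)^\alpha \bigl(e^{-kr/2}+e^{-(n-k)r/4}\bigr)^\alpha \leq K,
\end{equation*}
where $K:=Q(C\delta_0)^\alpha \sum_{k\geq 0}\bigl(e^{-kr/2}+e^{-kr/4}\bigr)^\alpha$ is finite and independent of $x$, $y$, $n$. This gives the Bowen property at scale $\eps^\prime$ on $\GGG_g(r)$. The main obstacle is actually upstream, in \thmref{thm:k-disks} itself: one must verify that the center-stable/center-unstable disks and their bracket property persist, with the required contraction rates, under the finer grouping $E^{cs}=E^s\oplus E_1^c\oplus\cdots\oplus E_j^c$ and $E^{cu}=E_{j+1}^c\oplus\cdots\oplus E_k^c\oplus E^u$. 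Once those disks and item (5) are granted, the argument above is a direct transcription of \propref{prop:bowen} and introduces no new analytic difficulty.
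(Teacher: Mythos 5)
Your proposal is correct and takes essentially the same approach as the paper: the paper itself offers no separate argument for \propref{prop:bowen+}, simply stating that the proof of \thmref{thm:main+} is adapted from \secref{sec:main-proof}, and your line-by-line transcription of the proof of \propref{prop:bowen} with $\varphi_1^c,\widehat{\varphi}_2^c$ replaced by $\beta_1,\widehat{\beta}_2$ and \thmref{thm:loc-disks} replaced by \thmref{thm:k-disks} (together with the analogue of \rmkref{rmk:LPS} giving the bracket point in $W_\rho^{cs}(g,x)\cap W_\rho^{cu}(g,y)$) is exactly the intended adaptation. Your closing remark correctly locates the real content upstream in \thmref{thm:k-disks}.
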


We take $r=\eta$ and $g$ as in \lemref{lem:less-press+}. Let's consider the decomposition $(\PPP_g(\eta), \GGG_g(\eta), \SSS_g(\eta))$.

\begin{proposition}{\rm (Pressure of obstruction to expansivity)}\label{prop:expansivity+}
    There exist $\eps_1 > 0$ and a $C^{1}$ neighborhood $\UUU$ of $f$ such that if $g \in \UUU$, then $\Pexp(g,\phi,\eps_1) < P(g, \phi)$.
\end{proposition}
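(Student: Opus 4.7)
The plan is to imitate the argument of \propref{prop:expansivity}, now combined with \propref{lem:less-press+}. The key ingredient is the expansivity-via-hyperbolicity result of \cite{liao2013entropy}: since $f$ admits a dominated splitting $E^s\oplus E_1^c\oplus\cdots\oplus E_k^c\oplus E^u$ with one-dimensional central bundles, it lies in $\Diff^{1}(M)\setminus \overline{\mathrm{HT}^{1}(M)}$, and the same is true for all $g$ in a $C^1$-neighborhood of $f$. Applying \cite[Theorem 3.1]{liao2013entropy} yields a $C^1$ neighborhood $\UUU$ of $f$ and a constant $\eps_1>0$ such that for every $g\in \UUU$ and every $g$-ergodic measure $\mu$, one has $\Gamma_{\eps_1}(g,x)=\{x\}$ for $\mu$-a.e.\ $x\in M$ whenever $\mu$ has no zero Lyapunov exponent. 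Equivalently, if $\mu$ is not almost expansive at scale $\eps_1$, then $\mu$ must carry at least one zero Lyapunov exponent.

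Next I would observe that the Lyapunov exponents along $E^s$ and $E^u$ are uniformly bounded away from zero (shrinking $\UUU$ if necessary to preserve the stable/unstable rates). Therefore, if $\mu(\NE(\eps_1))=1$ and $\mu$ is ergodic, the zero exponent must appear in the center, i.e.\ $\lambda^c_{j_0}(g,\mu)=0$ for some $j_0\in\{1,\dots,k\}$. Split into two cases according to the index $i$ from the decomposition: if $j_0\le i$ then
\begin{equation*}
    \gamma_1(g,\mu)=\max\{\lambda_1^c(g,\mu),\dots,\lambda_i^c(g,\mu)\}\ge 0\ge -\eta,
\end{equation*}
and if $j_0\ge i+1$ then
\begin{equation*}
    \gamma_2(g,\mu)=\min\{\lambda_{i+1}^c(g,\mu),\dots,\lambda_k^c(g,\mu)\}\le 0\le \eta.
\end{equation*}
In either situation \propref{lem:less-press+} provides the uniform bound $P_\mu(g,\phi)<a<P(g,\phi)$.

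Finally, taking the supremum of $P_\mu(g,\phi)$ over all ergodic measures with $\mu(\NE(\eps_1))=1$ (which, by the ergodic decomposition, controls the non-ergodic almost-non-expansive measures as well) gives $\Pexp(g,\phi,\eps_1)\le a<P(g,\phi)$, uniformly for $g\in\UUU\cap\Diff^{1+}(M)$ after intersecting $\UUU$ with the neighborhood produced by \propref{lem:less-press+}. I do not foresee any substantive obstacle: the argument is structurally identical to the two-dimensional center case, the only real content being that a single vanishing central exponent is enough to place $\mu$ in the regime controlled by one of the two alternatives of \propref{lem:less-press+}. The main point requiring care is verifying that $f$ (and its $C^1$-perturbations) genuinely lie away from tangencies so that \cite[Theorem 3.1]{liao2013entropy} applies; this is ensured once and for all by the existence of the full one-dimensional dominated splitting in the center.
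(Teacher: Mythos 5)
Your proof is correct and follows essentially the same approach the paper intends: invoke \cite[Theorem 3.1]{liao2013entropy} (applicable because $f$ and its $C^1$-perturbations lie away from tangencies thanks to the one-dimensional central dominated splittings), conclude that any ergodic measure giving full mass to $\NE(\eps_1)$ must carry a zero central exponent, and then apply \propref{lem:less-press+} to bound its free energy strictly below $P(g,\phi)$. The one genuinely new step in passing from the two-dimensional center to the $k$-dimensional one --- observing that a vanishing $\lambda^c_{j_0}(g,\mu)$ forces $\gamma_1(g,\mu)\ge -\eta$ if $j_0\le i$ and $\gamma_2(g,\mu)\le \eta$ if $j_0\ge i+1$, so that either branch of \propref{lem:less-press+} applies --- is exactly the right observation and you handle it correctly.
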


Let $\widetilde{\VVV}$ be a $C^1$ neighborhood of $f$ as in \lemref{lem:less-press+}.
\begin{proposition}{\rm (Pressure of ``bad'' set)}\label{prop:bad set+}
    There exists $\eps_2 > 0$ such that $P(\PPP_g(\eta) \cup \SSS_g(\eta), g, \phi, \delta, \eps) < P(g, \phi)$ for every $0 < \eps < \eps_1$ and $g \in \widetilde{\VVV}$.
\end{proposition}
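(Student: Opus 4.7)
The plan is to transcribe the empirical-measure / variational argument from the proof of Proposition~\ref{prop:bad set}, replacing the pair $(\varphi_1^c,\widehat{\varphi}_2^c)$ throughout by the multi-bundle envelopes $(\beta_1,\widehat{\beta}_2)$. Fix $g\in\widetilde{\VVV}\cap\Diff^{1+}(M)$ and $\delta>0$, and for each $n$ choose a maximizing $(n,\delta)$-separated set $E_n\subset (\PPP_g(\eta)\cup\SSS_g(\eta))_n$ for $\Lambda(\PPP_g(\eta)\cup\SSS_g(\eta),g,\phi,\delta,n)$. Form
\[
\nu_n \coloneqq \frac{\sum_{x\in E_n} e^{\Phi_0(x,n)}\delta_x}{\sum_{x\in E_n}e^{\Phi_0(x,n)}}, \qquad \mu_n \coloneqq \frac{1}{n}\sum_{i=0}^{n-1} g_*^i\nu_n,
\]
and extract a weak$^*$ subsequential limit $\mu=\lim_k \mu_{n_k}$. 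Then $\mu$ is $g$-invariant, and the standard Misiurewicz-type step recalled in the proof of \cite[Theorem 9.10]{walters2000introduction} yields $P(\PPP_g(\eta)\cup\SSS_g(\eta),g,\phi,\delta) \le P_\mu(g,\phi)$.

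The central step is to show that $\mu$ satisfies the hypothesis of Proposition~\ref{lem:less-press+}. I would split $E_n = E_n^{\PPP}\sqcup E_n^{\SSS}$ along the decomposition, and by passing to a further subsequence arrange that at least half of the weighted mass $\{e^{\Phi_0(x,n)}\}_x$ concentrates on one of the two pieces; the two cases are symmetric under replacing $g$ by $g^{-1}$, so assume it is $E_n^{\PPP}$. For each $x\in E_n^{\PPP}$ the definition of $\PPP_g(\eta)$ gives $\tfrac{1}{n}S_n\beta_1(x)\ge -\eta$; taking the weighted average against $\nu_n$ and using $\int \beta_1\, d\mu_n = \tfrac{1}{n}\int S_n\beta_1\, d\nu_n$, one passes to the limit and obtains $\int \beta_1\, d\mu \ge -\eta$. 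Unravelling $\beta_1=\max_{1\le j\le i}\varphi_j^c$ and invoking the ergodic decomposition together with a pigeonhole argument, a positive-mass family of ergodic components $\mu_x$ of $\mu$ satisfies $\int \varphi_j^c\, d\mu_x \ge -\eta$ for one single index $j\in\{1,\dots,i\}$, so that the conditional measure $\mu'$ on this family is $g$-invariant with $\gamma_1(g,\mu')\ge -\eta$ and $P_{\mu'}(g,\phi)$ still at least $P(\PPP_g(\eta)\cup\SSS_g(\eta),g,\phi,\delta)$. Proposition~\ref{lem:less-press+} then delivers $P_{\mu'}(g,\phi)<a<P(g,\phi)$.

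The passage from scale $\eps=0$ to a positive scale is routine: by uniform continuity of $\phi$ we have $\sup_x|\Phi_\eps(x,n)-\Phi_0(x,n)|=o(n)$ as $\eps\to 0$, so $P(\PPP_g(\eta)\cup\SSS_g(\eta),g,\phi,\delta,\eps) \to P(\PPP_g(\eta)\cup\SSS_g(\eta),g,\phi,\delta)$ as $\eps\to 0^+$, and one picks $\eps_2>0$ small enough to preserve the strict inequality for all $0<\eps<\eps_2$. The main obstacle is precisely the ergodic-decomposition step: the inequality $\int\beta_1\,d\mu\ge -\eta$ is strictly weaker than $\max_j\int\varphi_j^c\,d\mu \ge -\eta$ (since $\int\max\ge\max\int$ in general), so the extraction of a single dominant-index subcomponent, together with the verification that its pressure still bounds $P(\PPP_g(\eta)\cup\SSS_g(\eta),g,\phi,\delta)$, is the only non-verbatim part of the argument; it can alternatively be avoided by strengthening Proposition~\ref{lem:less-press+} to cover the envelope hypothesis $\int\beta_1\,d\mu\ge -\eta$ directly, which would shorten the present step to a one-line invocation.
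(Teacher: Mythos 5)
Your overall strategy (empirical measures, Walters' Theorem 9.10, invoke \propref{lem:less-press+}, then absorb the scale $\eps$ by uniform continuity of $\phi$) is exactly what the paper intends for \propref{prop:bad set+} --- it is a verbatim transcription of the proof of \propref{prop:bad set} with $(\varphi_1^c,\widehat\varphi_2^c)$ replaced by $(\beta_1,\widehat\beta_2)$. You are also right to flag the discrepancy between $\int\beta_1\,d\mu=\int\max_j\varphi_j^c\,d\mu$ and $\gamma_1(g,\mu)=\max_j\int\varphi_j^c\,d\mu$: from the empirical limit one only gets the former quantity bounded below by $-\eta$, while \propref{lem:less-press+}'s hypothesis is stated in terms of the latter, and $\int\max\ge\max\int$ goes the wrong way. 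The paper silently bridges this by asserting, in the paragraph introducing $\beta_1,\widehat\beta_2$, that $\gamma_1(g,\mu)=\int\beta_1\,d\mu$ for ergodic $\mu$; if one grants that identity the argument closes immediately, so the paper itself contains no pigeonhole step.

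Your proposed pigeonhole patch does not close the gap you identified. Disintegrating $\int\beta_1\,d\mu\ge-\eta$ over the ergodic decomposition only produces a positive-mass family of ergodic components $\mu_x$ with $\int\beta_1\,d\mu_x\ge-\eta$; the same inequality $\int\max_j\varphi_j^c\,d\mu_x\ge\max_j\int\varphi_j^c\,d\mu_x$ means you still cannot extract a single index $j$ with $\int\varphi_j^c\,d\mu_x\ge-\eta$, so you do not obtain $\gamma_1(g,\mu_x)\ge-\eta$. (There is also no reason the pressure should concentrate on that sub-family of components.) Your alternative remedy --- state and prove \propref{lem:less-press+} directly with the envelope hypothesis $\int\beta_1\,d\mu\ge-\eta$ --- is the right fix in principle, and is essentially what the paper's claimed identity amounts to; but note that this is a genuinely stronger statement than the $\gamma_1$-version and is not justified merely by the $k$-bundle analogue of \lemref{lem:hyp-meas} (which controls $\lambda_j^c$, i.e., $\int\varphi_j^c$, not $\int\max_j\varphi_j^c$). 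Either verify the identity $\gamma_1(g,\mu)=\int\beta_1\,d\mu$ in this setting, or redesign the decomposition (e.g., define $\PPP_g$ as $\{(x,n):S_n\varphi_j^c(x)\ge-rn\ \mbox{for some}\ j\le i\}$) so that the limit measure's property matches the Lemma's stated hypothesis; as written, the step from $\int\beta_1\,d\mu\ge-\eta$ to \propref{lem:less-press+} is the one place the argument needs more than a transcription.
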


\subsection{Proof of \thmref{thm:main+}}
Let $\UUU$ be a $C^1$ neighborhood of $f$ satisfying the above discussion. We choose a $\eps$ such that $0<\eps < \eps^{\prime}, \widehat{\eps}, \eps_1,\eps_2$, where $\eps^{\prime}, \widehat{\eps}, \eps_1$ and $\eps_2$ as in \propref{prop:bowen+}, \propref{prop:expansivity+} and \propref{prop:bad set+}, respectively. For every $g\in \UUU\cap \Diff^{1+}(M)$, there exists a decomposition $(\PPP_g(\eta), \GGG_g(\eta), \SSS_g(\eta))$ for $X\times \NN$ such that $\GGG_g(\eta)$ has the Bowen property at scale $\eps$ and
\begin{equation*}
    \begin{split}
        &\Pexp(g,\phi,\eps) < P(g,\phi),\\
        &P(\PPP_g(\eta)\cup \SSS_g(\eta), g, \phi, \delta, \eps)< P(g,\phi),
    \end{split}
\end{equation*}
for every $g\in \UUU\cap \Diff^{1+}(M)$ and $\delta > 0$. Applying \propref{prop:specification+}, we obtain that $g$ has specification on $\GGG_g(\eta)$ at scale $\delta$ for every $g\in \UUU\cap \Diff^{1+}(M)$ with $\delta< 2000\eps$. Therefore, by \thmref{thm:ct-criteria}, $(g,\phi)$ has a unique equilibrium state.

\subsection*{Acknowledgement}
We are thankful to Dong Chen for helpful discussions on this work.

\bibliographystyle{amsalpha}
\bibliography{ref}

\end{document}